\theoremstyle{plain}
\newtheorem{theorem}{Theorem}[subsection]
\newtheorem{corollary}[theorem]{Corollary}
\newtheorem{proposition}[theorem]{Proposition}
\newtheorem{lemma}[theorem]{Lemma}
\theoremstyle{definition}
\newtheorem{definition}[theorem]{Definition}
\newtheorem{example}[theorem]{Example}
\newtheorem{question}[theorem]{Question}
\theoremstyle{remark}
\newtheorem{observations}[theorem]{Observations}
\newtheorem{remarks}[theorem]{Remarks}
\numberwithin{equation}{subsection}
\DeclareMathOperator{\coz}{coz}
\def\downset#1{\left\downarrow{#1}\right\downarrow}
\begin{document}
\title[Pointfree Pointwise Suprema ]{Pointfree Pointwise Suprema \\
in Unital Archimedean $\ell$-Groups}
\author{Richard N. Ball, Anthony W. Hager, and Joanne Walters-Wayland}
\address[Ball]{Department of Mathematics, University of Denver, Denver, CO
80208, U.S.A.}
\address[Hager]{Department of Mathematics, Wesleyan University, Middletown,
Connecticut, 06457}
\address[Walters-Wayland]{Center for Excellence in Computation, Algebra, and
Topology, Chapman University, Orange, California}
\date{\today}
\thanks{File name: Pointfree Pointwise Suprema.tex}
\keywords{archimedean $\ell$-group, extremally disconnected space, 
$\sigma$-complete $\mathbf{W}$-object, completely regular $P$-frame}
\subjclass[2010]{06D22,06F20;54D15,54A05}
\begin{abstract}
We generalize the concept of the pointwise supremum of real-valued functions
to the pointfree setting. The concept itself admits a direct and intuitive
formulation which makes no mention of points. But our aim here is to
investigate pointwise suprema of subsets of $\mathcal{R}L$, the family of
continuous real valued functions on a locale, or pointfree space.

Thus our setting is the category $\mathbf{W}$ of archimedean lattice-ordered
groups ($\ell$-groups) with designated weak order unit, with morphisms which
preserve the group and lattice operations and take units to units. This is
an appropriate context for this investigation because every 
$\mathbf{W}$-object can be canonically represented as a subobject of 
some $\mathcal{R}L$.

We show that the suprema which are pointwise in the Madden representation
can be characterized purely algebraically.  They are precisely the suprema which 
are context-free, in the sense of being preserved in every $\mathbf{W}$ 
homomorphism out of of $G$. We show that closure under such suprema 
characterizes the $\mathbf{W}$-kernels among the convex $\ell$-subgroups. 
And we prove that all existing (countable) joins in $\mathcal{R}L$ are pointwise
iff $L$ is boolean (a $P$-frame).   

This leads up to the appropriate analog of the Nakano-Stone Theorem: a (completely
regular) locale $L$ has the feature that $\mathcal{R}L$ is conditionally
pointwise complete ($\sigma $-complete), i.e., every bounded (countable)
family from $\mathcal{R}L$ has a pointwise supremum in $\mathcal{R}L$, 
iff $L $ is boolean (a $P$-locale).

It is perhaps surprising that pointwise suprema can be characterized purely
algebraically, without reference to a representation. They are the
context-free suprema, in the sense that the pointwise suprema are precisely
those which are preserved by all morphisms out of $G$. We adopt the latter
attribute as the final, representation-free definition of pointwise suprema.

Thus emboldened, we adopt a maximally broad definition of unconditional
pointwise completeness ($\sigma$-completeness): a divisible $\mathbf{W}$%
-object $G$ is pointwise complete ($\sigma$-complete) if it contains a
pointwise supremum for every subset which has a supremum in any extension.
We show that the pointwise complete ($\sigma$-complete) $\mathbf{W}$-objects
are those of the form $\mathcal{R}L$ for $L$ a boolean locale ($P$-locale).
Finally, we show that a $\mathbf{W}$-object $G$ is pointwise $\sigma$%
-complete iff it is epicomplete.
\end{abstract}

\maketitle
\tableofcontents

\section{Introduction}

When considering the suprema of real-valued functions, it is often relevant
to know whether this supremum coincides with the function obtained by taking
the supremum of the real values at each point. Here we propose a natural
generalization of this notion to the pointfree setting. We first define
pointwise suprema in $\mathcal{R}L$, where the classical definition can be
naturally articulated. We then show that this concept is actually
independent of the representation of a particular $\mathbf{W}$-object as a
subobject of $\mathcal{R}L$. For it is precisely the pointwise suprema in $%
\mathcal{R}L$ which are preserved by every $\mathbf{W}$-morphism out of $%
\mathcal{R}L$. We take advantage of this unexpected information by adopting
the latter attribute as the final, purely algebraic definition of pointwise
supremum: an element $g \in G$ is the pointwise join of a subset $K
\subseteq G^+$ iff $\theta(g) = \bigvee \theta [K]$ for all $\mathbf{W}$%
-morphisms $\theta $ out of $G$.

The notion of pointfree pointwise suprema has several useful applications. 
For example, a convex $\ell$ subgroup $K$ of a 
$\mathbf{W}$-object $G$ is a $\mathbf{W}$-kernel iff it is pointwise closed, 
i.e., iff $K$ contains any pointwise join of a subset of $K$ which exists in $G$.
And all existing (countable) suprema in $\mathcal{R}L$ are pointwise iff 
$L$ is boolean (a $P$-frame). 

This leads directly to Nakano-Stone type theorems. One of our main results is
Theorem \ref{Thm:3}: $\mathcal{R}L$ is conditionally pointwise complete 
($\sigma $-complete) iff $L$ is boolean (a $P$-frame).

Unconditional pointwise completeness requires that certain unbounded subsets
of a given $\mathbf{W}$-object $G$ have pointwise suprema in $G$, but, of
course, not all subsets can have suprema in $G$. The most permissive
criterion for a subset to have a pointwise supremum in $G$ is that the
subset have a supremum in some extension of $G$. We adopt this criterion as
our definition of unconditional pointwise completeness in Section~\ref{Sec:6}, 
and then show that the $\mathbf{W}$-objects which enjoy this attribute are
precisely those of the form $\mathcal{R}L$ for $L$ a Boolean frame, and
those which enjoy the corresponding unconditional $\sigma$-completeness are
those of the form $\mathcal{R}L$ for $L$ a $P$-frame. Finally, we show that the 
pointwise complete ($\sigma$-complete) objects form a full bireflective subcategory 
of $\mathbf{W}$.

The paper is organized as follows. After a preliminary Section \ref{Sec:3},
we briefly outline Madden's pointfree representation for $\mathbf{W}$ in
Section \ref{Sec:2}. We define pointwise suprema in Section \ref{Sec:4},
first in $\mathcal{R}L$ in Subsection \ref{Subsec:5} and then in $\mathbf{W}$
in Subsection \ref{Subsec:6}. In Section \ref{Sec:8} we give several different
applications of the notion of pointwise suprema.  We show that the 
$\mathbf{W}$-kernels of a particular $\mathbf{W}$-object are precisely
the pointwise closed convex $\ell$-subgroups. We show that all existing 
(countable) suprema in $\mathcal{R}L$ are pointwise iff $L$ is 
boolean (a $P$-frame). And we show that any element of a $\mathbf{W}$-object
is the pointwise join of its truncates, we characterize the sequences that 
can be realized as the truncates of an element of an extension, and we 
show that every such sequence has a pointwise supremum in $\mathcal{RM}G$. 
(Notation to be introduced subsequently.)  

Section \ref{Sec:5} is devoted to conditional pointwise completeness; the
main result here is the pointfree Nakano-Stone Theorem for conditional
pointwise completeness, Theorem \ref{Thm:3}. This result makes heavy use of
the pointfree generalization of the classical theorem, a beautiful result of
Banaschewski and Hong (\cite{BanaschewskiHong:2003}) which appears here in
embellished form as Theorem \ref{Thm:1}.

Section \ref{Sec:6} takes up unconditional pointwise completeness. A reveiw
of the well known facts concerning essential extensions constitutes
Subsection \ref{Subsec:7}, and a review of the less well known facts
concerning cuts occupies Subsection \ref{Subsec:8}. The section culminates
in Subsection \ref{Subsec:9}, in which we summarize our findings as they
pertain to unconditional pointwise completeness in Theorem \ref{Thm:4}.

\section{Preliminaries\label{Sec:3}}

Our notation and terminology is conventional for the most part, save only
for our notation for downsets. A subset $K$ of a poset $G$ is a 
\emph{downset} if $g \leq k \in K$ implies $g \in K$. We write 
\begin{equation*}
\downset{K}_G \equiv  \left\{ g \in G : g \leq k 
\text{ for some $k \in K$}\right\} 
\end{equation*}
for the downset in $G$ generated by a subset $K \subseteq G$. We drop the
subscript whenever it is unambiguous to do so. Upsets are defined and
denoted dually.

For a $\mathbf{W}$-object $G$, we denote by $\mathbb{R}^+(G)$
the set of those positive real numbers for which the corresponding
constant function is present in $G$.  Thus to say that 
$\bigwedge \mathbb{R}^+(G) = 0$ is to say that 
$G$ contains arbitrarily small positive multiples of $1$.  This is a weakening of 
the condition of being divisible which plays a prominent role in our
results. 

Good general references are \cite{AndersonFeil:1987} and \cite{Darnel:1994}
for $\ell$-groups, \cite{GillmanJerison:1960} for $\mathcal{C}X$, \cite%
{HagerRobertson:1977} for an introduction to $\mathbf{W}$, \cite%
{MaddenVermeer:1986} and \cite{BallHager:1991} for the pointfree, or Madden
representation for $\mathbf{W}$, \cite{Johnstone:1982} and \cite%
{PicadoPultr:2012} for general frame theory, and the many papers of Bernhard
Banaschewski, the tireless fount of knowledge of pointfree topology.

In spite of our use of the localic terminology in the abstract and
introduction, we prefer the algebraic language of frames and frame
morphisms. Henceforth, $\mathcal{R}L$ stands for the $\mathbf{W}$-object of
frame maps $g : \mathcal{O}\mathbb{R} \to L$, where $\mathcal{O}\mathbb{R}$
is the frame of open subsets of the real numbers $\mathbb{R}$ and $L$ is a
frame, assumed completely regular unless otherwise explicitly stipulated.

\section{A brief synopsis of the Madden representation\label{Sec:2}}

We mention here some of the technical results, familiarity with which will
be assumed in the sequel. The reader may skip this section upon a first
reading, returning to it as necessary.

\subsection{Calculation in $\mathcal{R}L$}

The arithmetic operations on $\mathbb{R}$ beget corresponding operations on $%
\mathcal{R}L$ as follows. We write $\vec{f}$ for $(f_1,f_2,\dots,f_n) \in (%
\mathcal{R}L)^n$, $\vec{U}$ for $(U_1,U_2,\dots,U_n) \in (\mathcal{O}\mathbb{%
R})^n$. For a continuous function $w:\mathbb{R}^{n}\rightarrow\mathbb{R}$ we
write $w( \vec{U}) \subseteq U$ to mean $U_{1}\times U_{2}\times\cdots\times
U_{n}\subseteq w^{-1}\left(U\right) $.

\begin{theorem}[{\protect\cite[3.1.1]{BallWalters:2002}}]\label{Thm:2}
The canonical lifting 
$w^{\prime}:\left( \mathcal{R}L\right)^{n}\rightarrow\mathcal{R}L$ 
of a continuous function $w:\mathbb{R}^{n}\rightarrow\mathbb{R}$ 
is given by the formula
\begin{equation*}
w^{\prime}( \vec{f}) \left( U\right) 
= \bigvee\limits_{w( \vec{U}) \subseteq U}
     \bigwedge\limits_{1\leq i\leq n} f_{i}\left( U_{i}\right), 
      \quad f_{i}\in \mathcal{R}L,\quad U\in\mathcal{O}\mathbb{R}.
\end{equation*}
The $U_{i}$'s in the supremum range over $\mathcal{O}\mathbb{R}$, and may be
taken to be rational intervals.
\end{theorem}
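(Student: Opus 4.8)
The plan is to verify that the stated formula for $w'(\vec{f})$ is indeed a frame map $\mathcal{O}\mathbb{R} \to L$, and that it is the unique natural lifting of $w$. The formula defines a function $g : \mathcal{O}\mathbb{R} \to L$ by $g(U) = \bigvee_{w(\vec{U}) \subseteq U} \bigwedge_i f_i(U_i)$, and the substance of the theorem is that $g$ respects the frame structure. First I would check that $g$ preserves finite meets (including the top element $\mathbb{R} \mapsto \top_L$) and arbitrary joins. The join-preservation is the more delicate of the two. Since each $f_i$ is itself a frame map, one exploits that the defining suprema can be taken over rational intervals, so that the indexing sets are countable and the relation $w(\vec{U}) \subseteq U$ behaves well under taking unions of the $U$'s; covering $U = \bigcup_\alpha U^\alpha$ and using continuity of $w$ (so that $w^{-1}(U) = \bigcup_\alpha w^{-1}(U^\alpha)$ is an open cover of a compact box's preimage pieces) lets one refine any box $\vec{U}$ with $w(\vec{U}) \subseteq U$ into finitely many boxes each landing in some single $U^\alpha$.

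The key mechanism is a compactness-and-interpolation argument at the level of rational intervals. To show $g(\bigcup_\alpha U^\alpha) = \bigvee_\alpha g(U^\alpha)$, the nontrivial inclusion is $\leq$: given a box $\vec{U}$ with $w(\vec{U}) \subseteq \bigcup_\alpha U^\alpha$, I would shrink each $U_i$ slightly to a rational subinterval $V_i$ with $\overline{V_i}$ compact, so that $w(\overline{V_1} \times \cdots \times \overline{V_n})$ is a compact subset of $\bigcup_\alpha U^\alpha$ and hence covered by finitely many $U^\alpha$. One then subdivides the box $\vec{V}$ into finitely many subboxes each mapping into a single $U^\alpha$, and recombines using that each $f_i$ sends the covering $U_i = \bigcup V_i^{(j)}$ to a join, i.e.\ $f_i(U_i) = \bigvee_j f_i(V_i^{(j)})$. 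Distributing the finite meets over these joins (using the frame distributive law in $L$) reassembles the bound $\bigwedge_i f_i(U_i) \leq \bigvee_\alpha g(U^\alpha)$.

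For finite meets, I would verify $g(U \cap U') = g(U) \wedge g(U')$ by combining a box for $U$ and a box for $U'$ into their componentwise intersection, again using the distributivity of $L$ together with the compatibility of the $f_i$ with finite meets; the product structure of the boxes makes $w(\vec{U} \cap \vec{U}') \subseteq U \cap U'$ whenever the two separate containments hold. Preservation of the top is immediate since $w(\mathbb{R}^n) \subseteq \mathbb{R}$ and each $f_i(\mathbb{R}) = \top$. Finally I would confirm naturality/uniqueness: the lifting must agree with $w$ on the images of the coordinate inclusions and be preserved by frame maps $L \to L'$, and a short check shows the formula is the only candidate with these properties, since any lifting is forced on the generating rational intervals.

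The main obstacle will be the join-preservation step, specifically the passage from an arbitrary open $U$ and an arbitrary box to a finite subcover after shrinking to compact closures. The delicate point is ensuring the shrinkage $V_i \subseteq U_i$ does not lose any of the join on the left, which is exactly why the theorem's final remark that the $U_i$ may be taken to be rational intervals is essential: it guarantees that $f_i(U_i) = \bigvee\{f_i(V_i) : \overline{V_i} \subseteq U_i,\ V_i \text{ rational}\}$, so no information is discarded in the compactification. Managing the bookkeeping of finitely many subboxes against finitely many covering $U^\alpha$, while keeping the distributive-law manipulation in $L$ under control, is where the real work lies.
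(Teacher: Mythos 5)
The paper itself offers no proof of this theorem: it is quoted verbatim from the cited source \cite[3.1.1]{BallWalters:2002}, where the canonical lifting is \emph{defined} as the composite of $\mathcal{O}w : \mathcal{O}\mathbb{R} \to \mathcal{O}(\mathbb{R}^n)$ with the frame map $\mathcal{O}(\mathbb{R}^n) \to L$ induced by the $f_i$'s via the identification $\mathcal{O}(\mathbb{R}^n) \cong \bigoplus^n \mathcal{O}\mathbb{R}$ (valid because $\mathbb{R}$ is locally compact) and the coproduct's universal property; the displayed formula then falls out by writing $w^{-1}(U)$ as a union of open boxes and noting that the induced map sends a box $U_1 \times \cdots \times U_n$ to $\bigwedge_i f_i(U_i)$. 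Your route is genuinely different but sound, and it has the same mathematical core: your direct verification that the formula preserves arbitrary joins --- shrink to rational intervals whose closures are compact and sit inside the $U_i$'s (legitimate because each $f_i$ preserves the corresponding join, not because of the theorem's final remark, so there is no circularity), take a Lebesgue number for the cover $\{w^{-1}(U^\alpha)\}$ of the compact box, subdivide into boxes of small diameter, absorb each small box into a single $U^\alpha$, and reassemble with the frame distributive law --- is precisely the argument that, in the standard treatment, is packaged once and for all in the theorem that $\mathcal{O}$ of a product of locally compact spaces is the frame coproduct. So your proof is more elementary and self-contained, at the cost of redoing that work by hand; the cited proof is shorter but leans on the coproduct machinery. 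The one step you should tighten is canonicity: naturality does not force the value of a lifting ``interval by interval,'' and the correct uniqueness argument is a generic-element one. In $\mathcal{R}(\mathcal{O}\mathbb{R}^n)$ the coordinate projections $(\mathcal{O}\pi_1,\dots,\mathcal{O}\pi_n)$ form a tuple which every $\vec{f} \in (\mathcal{R}L)^n$ is the image of, under the frame map $\mathcal{O}(\mathbb{R}^n) \to L$ they induce; naturality, together with the requirement that the lifting agree with composition by $w$ on the spatial frame $\mathcal{O}(\mathbb{R}^n)$, then forces $w'(\vec{f})$ to be the composite described above, hence equal to your formula. With that replacement your argument is complete.
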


The formula also applies to constant functions; the frame map lifted from
the constant function $x\longmapsto r$ is given by%
\begin{equation*}
r\left( U\right) =\left\{ 
\begin{array}{cc}
\top & \text{if }r\in U \\ 
\bot & \text{if }r\notin U%
\end{array}
\right. .
\end{equation*}

Theorem \ref{Thm:2} provides a ready proof of a special case of Weinberg\rq{}%
s Theorem (\cite{Weinberg:1963}). A term is an expression built up from
variables and constants using the operations $+$, $-$, $\vee$, and $\wedge$.
An identity is an equation with terms on either side. Weinberg\rq{}s Theorem
asserts that an equation holds in $\mathbb{R}$ iff it holds in every abelian 
$\ell$-group.

\begin{corollary}
\label{Cor:2}Any identity which holds in $\mathbb{R}$ also holds in any $%
\mathcal{R}L$.
\end{corollary}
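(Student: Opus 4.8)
The plan is to exploit Theorem~\ref{Thm:2}, which gives an explicit formula for the canonical lifting $w'$ of a continuous $w:\mathbb{R}^n\to\mathbb{R}$, together with the observation that every term in the sense of Weinberg is built from the continuous operations $+$, $-$, $\vee$, $\wedge$ (and constants). Since the composite of continuous functions is continuous, any term $t(x_1,\dots,x_n)$ in the variables $x_1,\dots,x_n$ is itself a continuous function $\mathbb{R}^n\to\mathbb{R}$, and I claim its canonical lifting to $\mathcal{R}L$ coincides with the term operation evaluated in the $\ell$-group $\mathcal{R}L$. First I would record this compatibility precisely: if $t$ is a term, then the $\ell$-group operation $t^{\mathcal{R}L}$ obtained by interpreting $+,-,\vee,\wedge$ in $\mathcal{R}L$ agrees with the lifting $(t)'$ of the continuous function $t:\mathbb{R}^n\to\mathbb{R}$.

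To establish this compatibility I would argue by induction on the structure of the term. The base cases are variables, where both sides are the projection, and constants, where the lifting is exactly the constant frame map displayed after Theorem~\ref{Thm:2}. For the inductive step, a term $t = w(t_1,\dots,t_k)$ with $w$ one of the basic binary (or unary) continuous operations requires knowing that the lifting of a composite equals the composite of the liftings, i.e.\ $(w\circ(t_1,\dots,t_k))' = w'\circ((t_1)',\dots,(t_k)')$ as maps $(\mathcal{R}L)^n\to\mathcal{R}L$. This functoriality of the lifting is precisely the content of Theorem~\ref{Thm:2}: the displayed formula defines $w'$ uniformly from $w$, and because the canonical lifting is characterized by its action on frame maps, it respects composition. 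Thus the $\ell$-group operations $+,-,\vee,\wedge$ in $\mathcal{R}L$ really are the liftings of the corresponding real operations, and iterating gives the claimed identification for arbitrary terms.

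With the compatibility in hand, the corollary follows quickly. Suppose an identity $s = t$ holds in $\mathbb{R}$, so that $s$ and $t$ denote the same continuous function $\mathbb{R}^n\to\mathbb{R}$. Their canonical liftings are therefore literally the same map $(\mathcal{R}L)^n\to\mathcal{R}L$, since the lifting is determined by the underlying continuous function. By the compatibility established above, $s^{\mathcal{R}L} = s' = t' = t^{\mathcal{R}L}$, which says exactly that the identity $s = t$ holds in $\mathcal{R}L$. Hence every identity valid in $\mathbb{R}$ is valid in $\mathcal{R}L$.

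The main obstacle I anticipate is the inductive step asserting that the lifting respects composition of continuous functions. One must check that substituting the frame maps $(t_i)'(\vec f)$ into the formula for $w'$ reproduces the formula for $(w\circ(t_1,\dots,t_k))'(\vec f)$; this amounts to a manipulation of the joins and meets in the displayed formula of Theorem~\ref{Thm:2}, using that $w(\vec U)\subseteq U$ can be refined through the intermediate functions $t_i$. I expect this to reduce to a frame-theoretic calculation that, while routine, is where the real work lies; everything else is bookkeeping. It is worth noting that one could also invoke the stronger fact that the assignment $w\mapsto w'$ is a functor on the category of Euclidean spaces and continuous maps into $\mathbf{W}$, in which case the corollary is immediate, but grounding the argument directly in Theorem~\ref{Thm:2} keeps it self-contained.
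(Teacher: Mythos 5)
Your proof is correct and follows essentially the same route as the paper: both reduce the identity $s=t$ to the observation that the two terms determine the same continuous function $\mathbb{R}^n\to\mathbb{R}$, so their canonical liftings given by Theorem~\ref{Thm:2} coincide. The only difference is that you make explicit (via induction on term structure) the compatibility between term evaluation in $\mathcal{R}L$ and the lifting $w\mapsto w'$, a step the paper's proof takes for granted since the $\ell$-group operations on $\mathcal{R}L$ are by definition the liftings of the corresponding operations on $\mathbb{R}$.
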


\begin{proof}
The terms on either side of the identity determine two functions $w_{i} :%
\mathbb{R}^{n}\rightarrow\mathbb{R}$, and these functions coincide because
the identity holds in $\mathbb{R}$. Therefore the liftings $w_{i}^{\prime}$
of these functions to $\mathcal{R}L$ coincide by Theorem \ref{Thm:2}.
\end{proof}

\subsection{A few useful formulas}

We record here a small number of formulas which will be especially useful in
what follows. They may be derived using from Theorem \ref{Thm:2} or even
Corollary \ref{Cor:2}. Details can be found in the literature by following
the references.

Lemma \ref{Lem:3} implies that a frame map $f:\mathcal{O}\mathbb{R}%
\rightarrow L$ is completely determined by its values on the right rays. For 
$f$ is clearly determined by its values on the base for $\mathcal{O}\mathbb{R%
}$ consisting of the open intervals $\left( r,s\right) $, $r<s$, and $%
f\left( r,s\right) =f\left( -\infty,s\right) \wedge f\left( r,\infty\right) $%
, and the left ray $f(-\infty,s)$ can be expressed in terms of the right
rays using the pseudocomplementation operator in the frame: 
\begin{equation*}
a^* \equiv \bigvee_{a \wedge b = \bot} b. 
\end{equation*}

\begin{lemma}[{\protect\cite[3.1.1]{BallHager:1991}}]\label{Lem:3}
For any $f,g\in\mathcal{R}L$ and $r\in\mathbb{R}$,

\begin{enumerate}
\item 
$f\left( -\infty,r\right) 
=\bigvee\limits_{s<r}f\left( s,\infty\right) ^{\ast}$,

\item 
$f\leq g$ iff $f\left( r,\infty\right) \leq g\left( r.\infty\right) $
for all $r\in\mathbb{R}$ iff $f\left( -\infty,r\right) 
\geq g\left(-\infty,r\right) $ for all $r\in\mathbb{R}$.
\end{enumerate}
\end{lemma}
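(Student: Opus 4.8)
The plan is to reduce both parts to the behaviour of a frame map on the right rays $(s,\infty)$, using only that $f$ preserves finite meets, arbitrary joins and the top element, the pseudocomplement computations $(s,\infty)^{\ast}=(-\infty,s)$ and $(-\infty,s)^{\ast}=(s,\infty)$ in $\mathcal{O}\mathbb{R}$, the general inequality $f(a^{\ast})\leq f(a)^{\ast}$ valid for every frame map, and finite distributivity in $L$.

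For (1), I would first observe that $(-\infty,r)=\bigvee_{s<r}(-\infty,s)=\bigvee_{s<r}(s,\infty)^{\ast}$ in $\mathcal{O}\mathbb{R}$, so join-preservation gives $f(-\infty,r)=\bigvee_{s<r}f\!\left((s,\infty)^{\ast}\right)$. Since $f\!\left((s,\infty)^{\ast}\right)\leq f(s,\infty)^{\ast}$, this already yields the inequality $f(-\infty,r)\leq\bigvee_{s<r}f(s,\infty)^{\ast}$. For the reverse inequality I would fix $s<r$ and show $f(s,\infty)^{\ast}\leq f(-\infty,r)$ directly: because $(-\infty,r)\vee(s,\infty)=\mathbb{R}$ whenever $s<r$, we have $f(-\infty,r)\vee f(s,\infty)=\top$, and meeting this with $f(s,\infty)^{\ast}$ and distributing kills the term $f(s,\infty)^{\ast}\wedge f(s,\infty)=\bot$, leaving $f(s,\infty)^{\ast}=f(s,\infty)^{\ast}\wedge f(-\infty,r)\leq f(-\infty,r)$. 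Taking the join over $s<r$ finishes (1).

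For (2), I would first record the meet formula $(f\wedge g)(r,\infty)=f(r,\infty)\wedge g(r,\infty)$, which drops straight out of Theorem \ref{Thm:2} applied to $w=\min$: the constraint $\min(U_{1},U_{2})\subseteq(r,\infty)$ forces both $U_{1}\subseteq(r,\infty)$ and $U_{2}\subseteq(r,\infty)$, so the largest summand is $f(r,\infty)\wedge g(r,\infty)$. Since $f\leq g$ means $f\wedge g=f$, and since part (1) shows that a frame map is determined by its values on the right rays, this is equivalent to $f(r,\infty)\wedge g(r,\infty)=f(r,\infty)$ for every $r$, i.e. to $f(r,\infty)\leq g(r,\infty)$ for all $r$. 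Finally, the equivalence with the left-ray condition follows from (1) together with its dual $f(r,\infty)=\bigvee_{s>r}f(-\infty,s)^{\ast}$ (proved by the same argument using $(-\infty,s)^{\ast}=(s,\infty)$): pseudocomplementation reverses order and joins preserve it, so $f(s,\infty)\leq g(s,\infty)$ for all $s$ is equivalent to $f(-\infty,r)\geq g(-\infty,r)$ for all $r$.

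The one genuinely nontrivial point is the reverse inequality in (1): a frame map need not preserve pseudocomplements, so one cannot simply transport $(s,\infty)^{\ast}=(-\infty,s)$ through $f$. The distributivity argument above, resting on $f(-\infty,r)\vee f(s,\infty)=\top$, is exactly what repairs this gap, and everything else is bookkeeping with the lifting formula and the order properties of $(\,\cdot\,)^{\ast}$.
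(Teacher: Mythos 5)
Your proof is correct. Note, however, that the paper does not actually prove this lemma: it is quoted from \cite[3.1.1]{BallHager:1991}, with only the remark that the formulas \enquote{may be derived from Theorem \ref{Thm:2}}, so there is no in-paper argument to compare against; your proposal supplies exactly the derivation the paper delegates to the literature. The three load-bearing steps all check out. First, the general inequality $f(a^{\ast})\leq f(a)^{\ast}$ does hold for any frame map, since $f(a^{\ast})\wedge f(a)=f(a^{\ast}\wedge a)=f(\bot)=\bot$. Second, your repair of the failure of frame maps to preserve pseudocomplements is the right one: from $(-\infty,r)\cup(s,\infty)=\mathbb{R}$ for $s<r$ you get $f(-\infty,r)\vee f(s,\infty)=\top$, and meeting with $f(s,\infty)^{\ast}$ and distributing gives $f(s,\infty)^{\ast}\leq f(-\infty,r)$; this, with the easy inequality, yields part (1), and the dual formula $f(r,\infty)=\bigvee_{s>r}f(-\infty,s)^{\ast}$ follows by the same argument. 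Third, your derivation of $(f\wedge g)(r,\infty)=f(r,\infty)\wedge g(r,\infty)$ from Theorem \ref{Thm:2} coincides with the paper's own proof of Corollary \ref{Cor:1}(4) (which appears later and independently of Lemma \ref{Lem:3}, so there is no circularity), and combining it with the observation that a frame map on $\mathcal{O}\mathbb{R}$ is determined by its right-ray values (part (1) plus the base of rational intervals, as the paper notes just before the lemma) correctly reduces $f\leq g$, i.e.\ $f\wedge g=f$, to the right-ray condition; the passage to left rays via order-reversal of $(\,\cdot\,)^{\ast}$ and the two ray formulas is also sound. One small point of rigor: in the meet formula the constraint $\min(U_{1},U_{2})\subseteq(r,\infty)$ forces $U_{1},U_{2}\subseteq(r,\infty)$ only when both are nonempty, but pairs with an empty factor contribute $\bot$ to the join, so nothing is lost.
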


Lemma \ref{Lem:1} gives necessary and sufficient conditions for a function
on right rays to be extended to a frame map.

\begin{lemma}[{\protect\cite[3.1.2]{BallHager:1991}}]
\label{Lem:1} A function $f:\left\{ \left(r,\infty\right) :r\in\mathbb{R}%
\right\} \rightarrow L$ can be extended to an element of $\mathcal{R}L$ iff
it satisfies the following conditions for all $r,s\in\mathbb{R}$. The
extension is unique when it exists.

\begin{enumerate}
\item $f\left( s,\infty\right) \prec f\left( r,\infty\right) $ whenever $r<s$%
.

\item $f\left( r,\infty\right) =\bigvee_{s>r}f\left( s,\infty\right) $.

\item $\bigvee_{r}f\left( r,\infty\right) =\bigvee_{r}f\left(
r,\infty\right) ^{\ast}=\top$.
\end{enumerate}
\end{lemma}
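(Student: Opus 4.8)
The plan is to treat the two implications separately, disposing of uniqueness and the \enquote{only if} direction quickly from the behaviour of frame maps, and then constructing the extension by hand for the \enquote{if} direction. Uniqueness is immediate: if two elements of $\mathcal{R}L$ agree on all right rays, then Lemma \ref{Lem:3}(2), applied in both directions, forces them to coincide.

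For the \enquote{only if} direction, suppose $f$ extends to $\bar f\in\mathcal{R}L$, so that $\bar f(r,\infty)=f(r,\infty)$. Condition (2) is just join-preservation applied to $(r,\infty)=\bigvee_{s>r}(s,\infty)$. For condition (3), $\bigvee_r(r,\infty)=\top$ gives $\bigvee_r f(r,\infty)=\top$, while from $(r,\infty)\wedge(-\infty,r)=\bot$ we get $\bar f(-\infty,r)\le f(r,\infty)^{*}$, so $\bigvee_r(-\infty,r)=\top$ yields $\bigvee_r f(r,\infty)^{*}=\top$. Condition (1) records that frame maps preserve the rather-below relation $\prec$: since $(s,\infty)^{*}\vee(r,\infty)=(-\infty,s)\vee(r,\infty)=\top$ whenever $r<s$, and since $h(a)\wedge h(a^{*})=\bot$ forces $h(a^{*})\le h(a)^{*}$ for every frame map $h$, we conclude $f(s,\infty)^{*}\vee f(r,\infty)=\top$.

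The substance lies in the \enquote{if} direction. Given $f$ on right rays satisfying (1)--(3), I would write $R(r)\equiv f(r,\infty)$ and, as dictated by Lemma \ref{Lem:3}(1), set $L(s)\equiv\bigvee_{t<s}R(t)^{*}$. The goal is to show that the pair $(R,L)$ is admissible data for a frame map $\mathcal{O}\mathbb{R}\to L$ in the standard generators-and-relations presentation of the frame of reals, with $(r,\infty)\mapsto R(r)$, $(-\infty,s)\mapsto L(s)$, and $(r,s)\mapsto R(r)\wedge L(s)$. The routine relations fall out directly: $R$ is antitone because $a\prec b$ implies $a\le b$, so (1) gives $R(s)\le R(r)$ for $r<s$; $L$ is monotone by construction; the covers $R(r)=\bigvee_{s>r}R(s)$ and $L(s)=\bigvee_{t<s}L(t)$ are condition (2) and a short calculation from the definition of $L$; and the two top-covers $\bigvee_r R(r)=\top$ and $\bigvee_s L(s)=\top$ are exactly the two halves of (3).

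The crux is the pair of \emph{mixed} relations gluing the lower and upper data together, namely $R(r)\wedge L(s)=\bot$ when $s\le r$ and $R(r)\vee L(s)=\top$ when $r<s$. The first is a distributivity computation: $R(r)\wedge L(s)=\bigvee_{t<s}(R(r)\wedge R(t)^{*})$, and each summand vanishes because $t<s\le r$ forces $R(r)\le R(t)$, whence $R(r)\wedge R(t)^{*}\le R(t)\wedge R(t)^{*}=\bot$. The second is precisely where condition (1) is indispensable: choosing $t$ with $r<t<s$, condition (1) gives $R(t)^{*}\vee R(r)=\top$, and since $L(s)\ge R(t)^{*}$ this forces $R(r)\vee L(s)=\top$. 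Verifying these two relations is the main obstacle; once they are in hand, the universal property of the presented frame of reals delivers a frame map $\bar f$ with $\bar f(r,\infty)=R(r)$, which is the desired extension. I would finally note that density of the rationals lets one pass freely between real and rational endpoints throughout, so the real-parameter conditions as stated match the rational presentation.
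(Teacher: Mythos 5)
Your proposal is correct. Note that the paper itself offers no proof of Lemma \ref{Lem:1}; it simply cites \cite[3.1.2]{BallHager:1991}, so the comparison is really with the standard argument in that literature --- which your proof reproduces faithfully: define the left rays by $\bigvee_{t<s}f(t,\infty)^{*}$ (exactly as Lemma \ref{Lem:3}(1) dictates), and check that this assignment satisfies the ray presentation of the frame of reals. Your treatment of the two mixed relations, which is indeed the crux, is right: the meet relation $f(r,\infty)\wedge\bigl(\bigvee_{t<s}f(t,\infty)^{*}\bigr)=\bot$ for $s\leq r$ follows from frame distributivity plus the antitonicity that (1) implies, and the join relation for $r<s$ is precisely where the rather-below hypothesis (1) is spent, via an interpolated $t$ with $r<t<s$. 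The uniqueness and \enquote{only if} parts are also handled correctly (in particular, your observation that frame maps preserve $\prec$ because $h(a^{*})\leq h(a)^{*}$). Two small points deserve attention: you overload the symbol $L$, using it both for the target frame and for the left-ray assignment, so rename one of them; and your appeal to the universal property of the \emph{presented} frame of reals tacitly invokes the classical fact that this presented frame coincides with the topology $\mathcal{O}\mathbb{R}$ (spatiality of the pointfree reals under choice), an identification that is standard throughout this literature but should be stated if the proof is written out in full.
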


We provide a proof of Corollary \ref{Cor:1} in order to illustrate the use
of Theorem \ref{Thm:2} in calculations. This sort of reasoning will get
heavy use in what follows. For $f \in \mathcal{R}L$, the cozero element of $f
$ is 
\begin{equation*}
\coz f \equiv f(\mathbb{R}\smallsetminus \{0\}) = |f|(0,\infty).
\end{equation*}

\begin{corollary}\label{Cor:1}
For $f,g\in\mathcal{R}L$ and $c,r\in\mathbb{R}$,

\begin{enumerate}
\item $\left( f-c\right) \left( r,\infty\right) =f\left( c+r,\infty \right) $%
.

\item $\coz f^{+} =\left( f\vee0\right) \left( \mathbb{R}\smallsetminus\left%
\{ 0\right\} \right) =f\left( 0,\infty\right) $.

\item $\coz \left( f-c\right) ^{+} =\left( f-c\right) \left( 0,\infty\right)
=f\left( c,\infty\right) $.

\item $\left( f\wedge g\right) \left( r,\infty\right) =f\left(
r,\infty\right) \wedge g\left( r,\infty\right) $.

\item For $f,g\geq0$, $\bigvee_{\mathbb{N}}\coz \left( nf-g\right) ^{+} =%
\coz f$.
\end{enumerate}
\end{corollary}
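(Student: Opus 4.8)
The plan is to derive every identity directly from the lifting formula of Theorem~\ref{Thm:2}, together with the explicit description of constant frame maps and Lemma~\ref{Lem:3} for passing between rays. Items (1)--(4) are short computations, while item (5) carries the real content.

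For (1) I would apply Theorem~\ref{Thm:2} to $w(x,y)=x-y$ with the second argument the constant $c$. Since $c(U_2)=\top$ exactly when $c\in U_2$, only pairs $(U_1,U_2)$ with $c\in U_2$ contribute, and $w(\vec U)\subseteq(r,\infty)$ says $x-y>r$ for all $x\in U_1$, $y\in U_2$. Taking $y=c$ forces $U_1\subseteq(c+r,\infty)$, giving $\le$; conversely, for $U_1=(c+r+\varepsilon,\infty)$ a small neighborhood $U_2$ of $c$ certifies the constraint, so the supremum collapses to $f(c+r,\infty)$ since $f$ preserves the directed join $(c+r,\infty)=\bigcup_\varepsilon(c+r+\varepsilon,\infty)$.

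For (2), note $f^{+}=f\vee0\geq0$, so $|f^{+}|=f^{+}$ and $\coz f^{+}=f^{+}(\mathbb{R}\smallsetminus\{0\})$; the left-ray part $f^{+}(-\infty,0)$ vanishes by $f^{+}\geq0$ and Lemma~\ref{Lem:3}(2), leaving $f^{+}(0,\infty)$, and applying Theorem~\ref{Thm:2} to $x\vee0$ with $0\in U_2$ reduces the max-constraint to $U_1\subseteq(0,\infty)$, yielding $f(0,\infty)$. Item (3) is then immediate: by (2) applied to $f-c$, $\coz(f-c)^{+}=(f-c)(0,\infty)$, which by (1) with $r=0$ equals $f(c,\infty)$. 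For (4), the constraint $\min(\vec U)\subseteq(r,\infty)$ holds iff $U_1,U_2\subseteq(r,\infty)$ simultaneously, so the supremum factors by frame distributivity as $f(r,\infty)\wedge g(r,\infty)$.

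Item (5) is the main obstacle and needs two inequalities. Using (2), I rewrite the claim as $\bigvee_{n}(nf-g)(0,\infty)=f(0,\infty)$. The inequality $\le$ is easy: $g\geq0$ gives $nf-g\leq nf$, so Lemma~\ref{Lem:3}(2) yields $(nf-g)(0,\infty)\leq(nf)(0,\infty)$, and a one-variable application of Theorem~\ref{Thm:2} to $x\mapsto nx$ shows $(nf)(0,\infty)=f(0,\infty)$. For $\ge$, since $f(0,\infty)=\bigvee_{s>0}f(s,\infty)$ it suffices to bound each $f(s,\infty)$ with $s>0$. Using that $g$ preserves $\top=\bigvee_{m}g(-\infty,m)$, I write $f(s,\infty)=\bigvee_{m}\bigl(f(s,\infty)\wedge g(-\infty,m)\bigr)$, and for each $m$ choose $n$ with $ns\geq m$. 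Taking $U_1=(s,\infty)$ and $U_2=(-\infty,m)$ in the lifting formula for $(nf-g)(0,\infty)$, the conditions $x>s$ and $y<m$ force $nx-y>ns-m\geq0$, so $f(s,\infty)\wedge g(-\infty,m)\leq(nf-g)(0,\infty)$; combining the displays gives $f(s,\infty)\leq\bigvee_{n}(nf-g)(0,\infty)$ and hence the result. The delicate point throughout is to confirm that the set-theoretic constraints $w(\vec U)\subseteq U$ translate into precisely the ray containments claimed, and in (5) that a single choice $n\geq m/s$ genuinely certifies strict positivity of $nx-y$ across all of $U_1\times U_2$.
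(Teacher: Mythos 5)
Your proposal is correct and follows essentially the same route as the paper: every item is derived from the lifting formula of Theorem~\ref{Thm:2} by analyzing which pairs $(U_1,U_2)$ contribute, using directed joins and frame distributivity, with (3) obtained from (1) and (2). The only cosmetic difference is that you organize (5) as two inequalities with explicit witnesses $U_1=(s,\infty)$, $U_2=(-\infty,m)$, $ns\geq m$, where the paper runs a single equality chain (swapping the joins over $n$ and $r$ and using $\bigvee_n g(-\infty,nr)=\top$ for $r>0$); the underlying computation is identical, and your version is, if anything, slightly more careful about the restriction to positive $s$.
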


\begin{proof}
To prove (1), consider $U\in\mathcal{O}\mathbb{R}$. Then by Theorem \ref%
{Thm:2} we have 
\begin{equation*}
\left( f-c\right) \left( r,\infty\right) = \hspace{-10pt}%
\bigvee_{U_{1}-U_{2}\subseteq\left( r,\infty\right) } \hspace{-10pt}\left(
f\left( U_{1}\right) \wedge c\left( U_{2}\right) \right) .
\end{equation*}
But if $U_{1}-U_{2}\subseteq\left( r,\infty\right) $ then $U_{1}$ is bounded
below and $U_{2}$ is bounced above, say $U_{1}\subseteq\left( t,\infty
\right) $ and $U_{2}\subseteq\left( -\infty,t-r\right) $ for some $t\in%
\mathbb{R}$. And if, in addition, and $c\left( U_{2}\right) >\bot$ then $%
c\in U_{2}$, hence $t>c+r$. That is to say that 
\begin{equation*}
\left( f-c\right) \left( r,\infty\right) = \bigvee_{t>c+r}
f\left(t,\infty\right) =f\left( c+r,\infty\right) .
\end{equation*}

The proof of (2) is similar to the proof of (1), and (3) follows from (1)
and (2). To prove (4), again consider $U\in O\mathbb{R}$. 
\begin{equation*}
\left( f\wedge g\right) \left( r,\infty\right) = \hspace{-10pt}%
\bigvee_{U_{1}\wedge U_{2}\subseteq\left( r,\infty\right) } \hspace{-10pt}%
\left( f\left( U_{1}\right) \wedge g\left( U_{2}\right) \right) .
\end{equation*}
But $U_{1}\wedge U_{2}\subseteq\left( r,\infty\right) $ iff $%
U_{1}\subseteq\left( r,\infty\right) $ and $U_{2}\subseteq\left( r,\infty
\right) $. Hence$\left( f\wedge g\right) \left( r,\infty\right) =f\left(
r,\infty\right) \wedge g\left( r,\infty\right) $.

To verify (5), note that 
\begin{equation*}
\coz \left( nf-g\right) ^{+} =\left( nf-g\right) \left(0,\infty\right) = 
\hspace{-10pt}\bigvee_{nU_{1}-U_{2}\subseteq\left( 0,\infty\right)} \hspace{%
-10pt}\left( f\left( U_{1}\right) \wedge g\left( U_{2}\right) \right) .
\end{equation*}
But if $nU_{1}-U_{2}\subseteq\left( 0,\infty\right) $ then $%
U_{1}\subseteq\left( r,\infty\right) $ and $U_{2}\subseteq\left( -\infty
,nr\right) $ for some $r\in\mathbb{R}$, so that 
\begin{align*}
\bigvee_{\mathbb{N}}\coz \left( nf-g\right) ^{+} &= \bigvee_{\mathbb{N}}
\bigvee_{\mathbb{R}} \left( f\left(r,\infty\right) \wedge g\left(
-\infty,nr\right) \right) = \bigvee_{\mathbb{R}} \bigvee_{\mathbb{N}} \left(
f\left( r,\infty\right)\wedge g\left(-\infty,nr\right) \right) \\
&=\bigvee_{\mathbb{R}}\left( f\left( r,\infty\right) \wedge \bigvee_{\mathbb{%
N}}g\left( -\infty,nr\right) \right) = \bigvee_{\mathbb{R}}f\left(
r,\infty\right) = \coz f. \qedhere
\end{align*}
\end{proof}

\subsection{The frame of $\mathbf{W}$-kernels of $A$\label{Subsec:2}}

Most of the calculation takes place in the frame of $\mathbf{W}$-kernels of $%
G$. The basic facts concerning this frame are well known; we briefly review
them here to fix notation.

\begin{lemma}\label{Lem:4}
Let $K$ be a convex $\ell$-subgroup of $G$.

\begin{enumerate}
\item $G/K$ is archimedean iff  
\begin{equation*}
\left( \forall~ n\in\mathbb{N~}\left( \left( nf-g\right) ^{+} \in K\right)
\Longrightarrow f\in K\right),  \qquad f,g \in G^+.
\end{equation*}

\item 
$K$ is a $\mathbf{W}$-kernel if, in addition, 
\[
g\wedge 1\in K \implies g\in K, \qquad g \in G^+.
\]
\end{enumerate}
\end{lemma}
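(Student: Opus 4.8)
The plan is to work inside the quotient $\ell$-group $G/K$ and translate its defining properties back to $G$ through the quotient homomorphism $\pi : G \to G/K$. Since $\mathbf{W}$-objects are abelian and $K$ is a convex $\ell$-subgroup (hence an $\ell$-ideal), $G/K$ is an $\ell$-group and $\pi$ is an $\ell$-homomorphism with kernel $K$. The linchpin of the whole argument is the order description of the quotient: for $x \in G$ one has $\pi(x) = 0$ iff $x \in K$, one has $\pi(x) \leq 0$ iff $x^+ \in K$, and every element of $(G/K)^+$ has a representative in $G^+$. The first assertion is immediate, and the positive cone of $G/K$ is $\pi(G^+)$, which gives the lifting of positive elements. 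For the inequality I would argue that $\pi(x) \geq 0$, say $x - h = k \in K$ with $h \geq 0$, forces $0 \leq x^- \leq k^-$ (from $x = h + k \geq k$), whence $x^- \in K$ by convexity since $k^- \leq |k| \in K$; the converse takes $h = x^+$. Replacing $x$ by $-x$ yields $\pi(x) \leq 0$ iff $x^+ \in K$.

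Granting this, part (1) is a direct translation. By definition $G/K$ is archimedean iff for all $\bar a, \bar b \in (G/K)^+$ the relation $n\bar a \leq \bar b$ for every $n \in \mathbb{N}$ forces $\bar a = 0$. Writing $\bar a = \pi(f)$ and $\bar b = \pi(g)$ with $f,g \in G^+$ (possible by the lifting observation), I would compute $n\pi(f) \leq \pi(g)$ iff $\pi(g - nf) \geq 0$ iff $(g-nf)^- = (nf - g)^+ \in K$, and note $\pi(f) = 0$ iff $f \in K$. The stated condition is exactly this rephrasing, once one checks that ranging $\bar a, \bar b$ over $(G/K)^+$ is the same as ranging $f,g$ over $G^+$.

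For part (2), the key identification is that $K$ is a $\mathbf{W}$-kernel precisely when $G/K$ is a $\mathbf{W}$-object with distinguished unit $\pi(1)$ and $\pi$ is the corresponding $\mathbf{W}$-morphism, and that a $\mathbf{W}$-object is exactly an archimedean $\ell$-group equipped with a weak order unit. Thus, assuming the archimedean condition of (1) already holds, $K$ is a $\mathbf{W}$-kernel iff $\pi(1)$ is a weak order unit of $G/K$. I would translate the weak-unit condition just as in (1): for $h \in G$ put $g = |h| \in G^+$, so that $\pi(1) \wedge |\pi(h)| = \pi(1) \wedge \pi(g) = \pi(1 \wedge g)$, which vanishes iff $g \wedge 1 \in K$, while $\pi(h) = 0$ iff $g \in K$. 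Hence $\pi(1)$ is a weak unit iff $g \wedge 1 \in K$ implies $g \in K$ for all $g \in G^+$, which is the additional clause, giving the full characterization the paper relies on.

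The work here is almost entirely bookkeeping; there is no deep obstacle, only the need to get the quotient order exactly right. The single place demanding genuine care is the order description of $G/K$, specifically the implication $\pi(x) \geq 0 \Rightarrow x^- \in K$ together with the fact that positive classes admit positive representatives, since everything else is formal substitution through $\pi$. Once that is nailed down, both (1) and (2) reduce to one-line translations, and the only remaining point to verify is the background fact that membership in $\mathbf{W}$ amounts to \emph{archimedean plus weak order unit}, so that being a $\mathbf{W}$-kernel decomposes cleanly into the two displayed conditions.
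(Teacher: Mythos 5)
Your proof is correct and takes essentially the same approach as the paper: both arguments translate the archimedean property and the weak-order-unit property of the quotient $G/K$ into conditions on $K$ through the quotient order. Your coset-order bookkeeping ($\pi(x)\geq 0$ iff $x^{-}\in K$, positive classes admit positive representatives) is just a fully spelled-out version of the paper's one-line computation $(nf-g)^{+}\in K \iff K+nf\leq K+g$, and your part (2) matches the paper's observation that the extra clause says exactly that $K+1$ is a weak unit of the quotient.
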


\begin{proof}
(1) We have
\begin{equation*}
\left( nf-g\right) ^{+} \in K \iff K+\left( nf-g\right) ^{+}
=K\iff K+nf\vee b=K+g\iff K+nf\leq K+g.
\end{equation*}
This makes it clear that the condition displayed in (1) is equivalent to the
archimedean property of the quotient $G/K$.

(2) This is evidently a reformulation of the requirement that $K+u$ should
function as a weak unit of the quotient, i.e., that $\left( K+g\right)
\wedge \left( K+u\right) =0$ imply $K+g=0$.
\end{proof}

\begin{corollary}\label{Cor:4}
Suppose $G$ is bounded. Then a convex $\ell$-subgroup $K$ 
is a proper $\mathbf{W}$-kernel iff
\begin{enumerate}
\item
$
\forall~n \in \mathbb{N}~((nf - 1)^+ \in K) \implies f \in K,\ f \in G^+,$ 
and 
\item
$1 \notin K$.
\end{enumerate}
In particular, $[g] = \{h : \forall~n~\exists~m~(n|h| - 1)^+ \leq mg\}$,
$g \in G^+$. 
\end{corollary}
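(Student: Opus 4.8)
The plan is to derive everything from Lemma \ref{Lem:4}, exploiting boundedness to (a) reduce the auxiliary element $g$ appearing there to the unit $1$, and (b) render the weak-unit clause Lemma \ref{Lem:4}(2) automatic. Recall that $G$ bounded means $1$ is a strong unit: every $h\in G$ satisfies $|h|\le m\cdot 1$ for some $m\in\mathbb N$. I would first record two consequences. Reduction (a): if (1) holds and $f,g\in G^+$ satisfy $(nf-g)^+\in K$ for all $n$, then choosing $m$ with $g\le m\cdot 1$ gives $(nf-m)^+\le(nf-g)^+\in K$, and replacing $n$ by $nm$ yields $m(nf-1)^+=(nmf-m)^+\in K$, whence $(nf-1)^+\in K$ by convexity; now (1) delivers $f\in K$. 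Thus, under boundedness, condition (1) is equivalent to the full archimedean criterion of Lemma \ref{Lem:4}(1). Reduction (b): for any convex $\ell$-subgroup $K$ and $g\in G^+$ with $g\wedge 1\in K$, pick $m$ with $g\le m\cdot 1$; passing to $G/K$, the image $\bar g\ge 0$ is disjoint from $\bar 1$, hence from $m\bar 1$, while $\bar g\le m\bar 1$, forcing $\bar g=\bar g\wedge m\bar 1=0$, i.e. $g\in K$. So Lemma \ref{Lem:4}(2) holds for free once $G$ is bounded.

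With these in hand the equivalence is quick. If $K$ is a proper $\mathbf W$-kernel then $G/K$ is archimedean, so Lemma \ref{Lem:4}(1) gives the archimedean implication for all $f,g\in G^+$, and the case $g=1$ is (1); moreover $1\notin K$, since $1\in K$ would force $|h|\le m\cdot1\in K$ and hence $K=G$, contradicting properness. Conversely, given (1) and (2), Reduction (a) supplies the archimedean criterion and Reduction (b) the weak-unit clause, so $K$ is a $\mathbf W$-kernel by Lemma \ref{Lem:4}; and (2) gives $K\ne G$, i.e. properness.

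For the formula I would set $M=\{h:\forall n\,\exists m\,(n|h|-1)^+\le mg\}$ and show $M=[g]$, the smallest $\mathbf W$-kernel containing $g$; since $\mathbf W$-kernels are closed under intersection and each is $\ker$ of its quotient map, $[g]=\bigcap\{\ker\phi:\phi\ \text{a }\mathbf W\text{-morphism out of }G,\ \phi(g)=0\}$. The inclusion $M\subseteq[g]$ is the soft half: if $h\in M$ and $\phi(g)=0$ for a $\mathbf W$-morphism $\phi$ into an archimedean $H$, then applying $\phi$ to $(n|h|-1)^+\le mg$ gives $(n|\phi h|-1)^+\le 0$, so $n|\phi h|\le 1$ for all $n$ and archimedeanness of $H$ forces $\phi h=0$; hence $h\in[g]$. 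For the reverse inclusion I would verify that $M$ is itself a $\mathbf W$-kernel containing $g$. That $g\in M$ is immediate (take $m=n$). That $M$ is solid follows from monotonicity of $t\mapsto(nt-1)^+$, and closure under addition from the real inequality $(n(x+y)-1)^+\le(2nx-1)^+ + (2ny-1)^+$; to verify condition (1) for $M$ I would use the real inequality $(kf-1)^+\le\bigl((2kf-1)^+-1\bigr)^+$, then apply the membership $(2kf-1)^+\in M$ (with its index equal to $1$) to bound the right-hand side by a multiple of $g$, and conclude $f\in M$. Both inequalities hold in $\mathbb R$ by a short case check and therefore transfer to $G$ by Corollary \ref{Cor:2}, being $\ell$-group inequalities in the variables and the constant $1$. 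With $M$ solid, additively closed and satisfying (1), Reductions (a),(b) together with Lemma \ref{Lem:4} show $M$ is a $\mathbf W$-kernel; since $g\in M$, minimality gives $[g]\subseteq M$.

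The main obstacle is this last verification that $M$ is a $\mathbf W$-kernel: unlike the purely formal inclusion $M\subseteq[g]$, checking closure under addition and condition (1) hinges on isolating the correct elementary inequalities over $\mathbb R$ and justifying their transfer to $G$ through the Madden embedding and Corollary \ref{Cor:2}. The boundedness reductions (a) and (b) are the other delicate points, but once stated they are routine.
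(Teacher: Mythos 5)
Your proof is correct, and for the biconditional it is essentially the paper's own argument: the paper likewise reduces to Lemma \ref{Lem:4}, merely asserting that under boundedness the element $g$ in Lemma \ref{Lem:4}(1) ``may be chosen to be $1$'' --- your Reduction (a), via $(nf-m)^+\leq (nf-g)^+$ and $m(nf-1)^+=(nmf-m)^+$ plus convexity, is exactly the verification the paper omits --- and then showing the weak-unit clause is automatic. For that last point the paper computes directly in $G$ (from $g\wedge 1\in K$ and $g\leq n$ it gets $ng\wedge n=n(g\wedge 1)\in K$, hence $g=g\wedge n\in K$ by convexity), whereas you pass to $G/K$ and use disjointness of $\bar g$ from $m\bar 1$; this is a cosmetic difference. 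Where you genuinely go beyond the paper is the closing formula $[g]=\{h:\forall n\,\exists m\,(n|h|-1)^+\leq mg\}$: the paper states it with no proof whatsoever, while you supply a complete one --- the soft inclusion by viewing $[g]$ as the intersection of kernels of morphisms killing $g$ and invoking archimedeanness of codomains, and the hard inclusion by checking that the right-hand side is itself a $\mathbf{W}$-kernel containing $g$. Your two elementary inequalities do hold identically over $\mathbb{R}$ (for the first, with $x\geq y$ one has $(2nx-1)^+\geq (n(x+y)-1)^+$ and the remaining summand is nonnegative; the second is a short case check), so they transfer to $G$ by Corollary \ref{Cor:2} together with restriction along the Madden embedding, as you indicate. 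In sum, your write-up both fills the step the paper calls ``straightforward'' and supplies the missing proof of the ``in particular'' clause; the only caution is that $[g]$ need not be a \emph{proper} kernel (e.g.\ $g=1$), but since Lemma \ref{Lem:4} does not require properness, your appeal to it for $M$ is unaffected.
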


\begin{proof}
It is straightforward to show that in condition (1) of Lemma \ref{Lem:4}, 
the element $g$ may be chosen to be $1$ if $G$ is bounded.  What we must
also demonstrate is that condition (2) above implies condition (2) of Lemma
\ref{Lem:4}. Given $g \in G^+$, find a positive integer $n$ such that 
$g \leq n$. Then $g \wedge 1 \in K$ implies $ng \wedge n \in K$ 
because $K$ is a group, hence $g \wedge n \in K$ because $K$ is convex, 
with the result that $g \in K$.
\end{proof}

Since $\mathbf{W}$ is closed under products, 
the intersection of an
arbitrary family of $\mathbf{W}$-kernels is itself a $\mathbf{W}$-kernel. We
denote the $\mathbf{W}$-kernel generated by a subset $S\subseteq G$ by 
\begin{equation*}
\left[ S\right] \equiv\bigcap\left\{ K:K\text{ is a }\mathbf{W}\text{-kernel
and }S\subseteq K\right\} .
\end{equation*}

\begin{definition}
The frame of $\mathbf{W}$-kernels of $G$ is called the \emph{Madden frame of 
$G$}; we denote it by $\mathcal{M}G$.
\end{definition}

\begin{lemma}
$\mathcal{M}G$ forms a regular Lindel\"{o}f frame under the inclusion order.
Its operations are 
\begin{equation*}
K_{1}\wedge K_{2} =K_{1}\cap K_{2}\text{ \ and\ \ } 
\bigvee_{I}K_{i} 
= \left[K_i : i \in I \right]
= \left[\bigcup_{I}K_{i}\right] .
\end{equation*}
\end{lemma}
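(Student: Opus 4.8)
The plan is to verify the four assertions---complete lattice, the meet and join formulas, regularity, and the Lindel\"of property---in that order, leaning throughout on the concrete description of principal $\mathbf{W}$-kernels recorded in Corollary \ref{Cor:4}. The lattice structure and the operations come essentially for free. Because $\mathbf{W}$ is closed under products, the intersection of any family of $\mathbf{W}$-kernels is again a $\mathbf{W}$-kernel, and intersection is plainly the infimum for inclusion; hence $\mathcal{M}G$ is a complete lattice with $K_1 \wedge K_2 = K_1 \cap K_2$, top $G$, and bottom $\{0\}$. Arbitrary joins then exist and must equal the least $\mathbf{W}$-kernel containing every $K_i$, which is exactly $[\bigcup_I K_i]$; that this coincides with $[K_i : i \in I]$ is immediate, since a $\mathbf{W}$-kernel contains all the $K_i$ iff it contains their union.

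The \emph{frame distributive law} is where I expect the main obstacle to lie. The idea is to pass to principal kernels. I would first show that every kernel is the join of the principal kernels it contains, namely $K = \bigvee\{[g] : g \in K \cap G^+\}$ (this holds because $K$ is generated as a group by its positive part), and that the principal kernels form a generating sublattice via $[g] \cap [h] = [g \wedge h]$ and $[g] \vee [h] = [g \vee h]$ for $g,h \in G^+$, both provable from the explicit membership criterion for $[g]$ together with the $\ell$-group identities. Granting this, the law $K \wedge \bigvee_I K_i = \bigvee_I (K \wedge K_i)$ reduces, after expanding each side in generators, to the single identity $[g] \cap \bigvee_I [h_i] = \bigvee_I [g \wedge h_i]$; the inclusion $\supseteq$ is automatic, and the content is $\subseteq$. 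The hard point is that membership in $\bigvee_I[h_i]$ is governed by the infinitary archimedean closure built into the definition of a $\mathbf{W}$-kernel, so one cannot merely chase finite witnesses: the real work is to show that the closure witness realizing an element of $[g] \cap \bigvee_I[h_i]$ can be cut down by $g$ so as to land in $\bigvee_I[g \wedge h_i]$.

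Regularity I would obtain through the rather-below relation $\prec$ used in Lemma \ref{Lem:1}. For $g \in G^+$ and a small positive $\epsilon$ present in $G$ one has $(g-\epsilon)^+ \leq g$, and I would check $[(g-\epsilon)^+] \prec [g]$ by exhibiting, via the pseudocomplement, a kernel disjoint from $[(g-\epsilon)^+]$ whose join with $[g]$ is all of $G$; concretely one takes $[(\epsilon-g)^+]$, using $(g-\epsilon)^+ \wedge (\epsilon-g)^+ = 0$ and $(\epsilon-g)^+ \vee g \geq \epsilon$, the latter forcing $[(\epsilon-g)^+] \vee [g] = G$ since a positive multiple of the unit generates $G$. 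Letting $\epsilon$ range gives $[g] = \bigvee_{\epsilon}[(g-\epsilon)^+]$ as a join of kernels rather-below $[g]$, the pointfree form of complete regularity; since every kernel is a join of principal kernels, $K = \bigvee\{J : J \prec K\}$ follows for every $K$, which is regularity.

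Finally, the Lindel\"of property. If $\bigvee_I K_i = G$, then $1 \in [\bigcup_I K_i]$, and I would extract from the archimedean-closure construction of this kernel a countable family of witnesses, each drawn from only finitely many of the $K_i$: the closure condition of Lemma \ref{Lem:4} quantifies over $n \in \mathbb{N}$, so each application of it consumes a single sequence of elements of the union, and the generation of $1$ is reached in countably many such steps. The countably many indices so used already satisfy $\bigvee_{i} K_{i} = G$ over that countable subfamily. Here the delicate point, secondary to the frame law, is to make precise that the iterative closure producing $1$ terminates after countably many stages and invokes only countably much data. Assembling the four parts yields that $\mathcal{M}G$ is a regular Lindel\"of frame with the stated operations; the argument follows the lines of \cite{MaddenVermeer:1986} and \cite{BallHager:1991}, to which I would refer for the routine verifications.
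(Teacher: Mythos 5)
Your overall architecture (lattice operations for free, distributivity via principal kernels, regularity via rather-below witnesses, Lindel\"{o}f via countable derivations) is the standard one, and the first part is fine; but the proposal has two genuine gaps. The first is the regularity step, which fails for a general $\mathbf{W}$-object as written. You choose ``a small positive $\epsilon$ present in $G$'' and let $\epsilon$ range to obtain $[g]=\bigvee_{\epsilon}[(g-\epsilon)^{+}]$. But the lemma is asserted for \emph{every} $G\in\mathbf{W}$, and such a $G$ need contain no constants besides the integer multiples of $1$; the paper is explicit about this distinction, introducing $\mathbb{R}^{+}(G)$ and imposing $\bigwedge\mathbb{R}^{+}(G)=0$ as a separate hypothesis in Theorems \ref{Thm:1}, \ref{Thm:3} and \ref{Thm:7}. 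If $\mathbb{R}^{+}(G)=\mathbb{N}$ (e.g.\ $G=\mathbb{Z}$), the only available $\epsilon$ is $1$, and then $\bigvee_{\epsilon}[(g-\epsilon)^{+}]=[(g-1)^{+}]$, which is $0$ whenever $g\leq 1$, not $[g]$. The repair is to scale $g$ rather than shrink the constant: $[g]=\bigvee_{n}[(ng-1)^{+}]$, because any $\mathbf{W}$-kernel containing every $(ng-1)^{+}$ contains $g$ by Lemma \ref{Lem:4}(1) (compare Corollary \ref{Cor:4}), and $[(ng-1)^{+}]\prec[g]$ with witness $[(1-ng)^{+}]$, since $(ng-1)^{+}\wedge(1-ng)^{+}=0$ while $(1-ng)^{+}+ng\geq 1$ puts $1$, and hence all of $G$, in $[(1-ng)^{+}]\vee[g]$. (Note also that your inequality $(\epsilon-g)^{+}\vee g\geq\epsilon$ is false pointwise---take $g=\epsilon/2$---you need the sum, not the join, though that slip alone would not sink the step.)

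The second gap is that the frame distributive law, which you yourself single out as the main obstacle and which is the actual mathematical content of the lemma, is never proved: you correctly reduce it to $[g]\cap\bigvee_{I}[h_{i}]\subseteq\bigvee_{I}[g\wedge h_{i}]$, observe that finite witness-chasing cannot work because joins of $\mathbf{W}$-kernels involve the infinitary archimedean closure, and then stop, saying the ``real work'' is to cut the closure witnesses down by $g$. That cutting-down is precisely where the difficulty lives: membership in $\bigvee_{I}[h_{i}]$ is certified by a transfinite iteration of the two closure rules of Lemma \ref{Lem:4}, and one must show by induction on that iteration that meeting with $[g]$ stays inside $\bigvee_{I}[g\wedge h_{i}]$; nothing in the proposal addresses this. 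The Lindel\"{o}f paragraph has the same unfinished character, though there the idea you sketch is sound: each closure rule has only countably many premises, so by induction on the stage (below $\omega_{1}$) at which $1$ appears, $1$ already lies in the kernel generated by countably many of the $K_{i}$, and a $\mathbf{W}$-kernel containing $1$ is all of $G$. For what it is worth, the paper offers no argument of its own to compare against---its proof is the citation \cite[3.2.2, 3.2.5]{BallHager:1991}---but judged as a self-contained proof, yours is a correct plan with one fixable error (regularity) and the central verification (distributivity) missing.
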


\begin{proof}
\cite[3.2.2, 3.25]{BallHager:1991}.
\end{proof}

\subsection{The Madden representation for $\mathbf{W}$}

\label{Subsec:1}

Let $G$ be a $\mathbf{W}$-object with $L \equiv \mathcal{M}G$ its frame of $%
\mathbf{W}$-kernels. For each $g \in G$ and $r \in \mathbb{R}$, define 
\begin{equation*}
\widehat{g}(r,\infty) \equiv \left[ \left( g - r \right)^+\right].
\end{equation*}
Thus defined, $\widehat{g}$ satisfies the requirements of Lemma \ref{Lem:1},
and thus extends to a unique frame map $\mathcal{O}\mathbb{R} \to L$, which
we also denote $\widehat{g}$. We write $\widehat{G}$ for $\{\widehat{g} : g
\in G\} $, and $\mu_G : G \to \widehat{G}$ for the mapping $g \mapsto 
\widehat{g}$.

We say that a $\mathbf{W}$-morphism $\theta : H \to \mathcal{R}M$ 
is \emph{cozero dense} if 
\begin{equation*}
a = \bigvee_{\substack{ h \in H  \\ \coz \theta(h) \leq a}} \coz \theta(h), 
                   \qquad a \in M.
\end{equation*}
Note that it is enough for this condition to hold for each $a \in \coz M$
because $M$ is assumed to be completely regular.  

\begin{theorem}[\protect\cite{MaddenVermeer:1986}]\label{Thm:6}
Let $G$, $L$, $\widehat{G}$, and $\mu _{G}$ have the meaning above.

\begin{enumerate}
\item Then $\mu_G$ is a cozero dense $\mathbf{W}$-injection, 
and its range restriction $G \to \widehat{G}$ is a 
$\mathbf{W}$-isomorphism.

\item $L$, $\widehat{G}$, and $\mu_G$ are unique up to isomorphism with
respect to their properties in (1).

\item For any frame $M$ and $\mathbf{W}$-morphism $\theta$ there is a unique
frame map $k$ making the diagram commute. 
\begin{figure}[h]
\setlength{\unitlength}{4pt}
\par
\begin{center}
\begin{picture}(36,12)(3,1)
\small
\put(0,12){\makebox(0,0){$G$}}
\put(12,12){\makebox(0,0){$\mathcal{R}L$}}
\put(12,0){\makebox(0,0){$\mathcal{R} M$}}
\put(24,12){\makebox(0,0){$L$}}
\put(24,0){\makebox(0,0){$M$}}
\put(36,12){\makebox(0,0){$\mathcal{O}\mathbb{R}$}}
\put(2,12){\vector(1,0){7.5}}
\put(2,10){\vector(1,-1){8}}
\put(12,10){\vector(0,-1){8}}
\put(24,10){\vector(0,-1){8}}
\put(33,12){\vector(-1,0){7}}
\put(34,10){\vector(-1,-1){8}}
\put(5,4.5){\makebox(0,0){$\theta$}}
\put(6,13.5){\makebox(0,0){$\mu_{G}$}}
\put(14.5,6){\makebox(0,0){$\mathcal{R} k$}}
\put(22.5,6){\makebox(0,0){$k$}}
\put(30,13.75){\makebox(0,0){$\widehat{g}$}}
\put(32.5,4.5){\makebox(0,0){$\theta(g)$}}
\end{picture}
\end{center}
\end{figure}

\item
$k$ is surjective iff $\theta$ is cozero dense, and $k$ is one-one iff, for all 
$K \subseteq G^+$, $\bigvee_K \coz \theta(g) = \top $ in $M$ implies 
$\bigvee_K \coz \widehat{g} = \top$ in $L$. 
\end{enumerate}
\end{theorem}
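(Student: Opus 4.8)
The plan is to build the construction in part~(1) first, then establish the universal property in part~(3), deduce the uniqueness in part~(2) formally from it, and finally read off the surjectivity and injectivity criteria of part~(4).

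For part~(1), I would first confirm that $\widehat{g}$ is well defined, i.e.\ that $r\mapsto[(g-r)^+]$ meets the three conditions of Lemma~\ref{Lem:1}. The right-continuity condition~(2) and the cozero half of condition~(3) follow from the join description of $\mathcal{M}G$ together with Corollary~\ref{Cor:1}(5), while the pseudocomplement half of~(3) and the weak-unit bookkeeping invoke Lemma~\ref{Lem:4}(2). The substantive point is condition~(1), the rather-below relation $[(g-s)^+]\prec[(g-r)^+]$ for $r<s$: choosing an intermediate $t$ with $r<t<s$ and the disjoint positive elements $(g-s)^+$ and $(t-g)^+$, one shows $[(g-s)^+]\wedge[(t-g)^+]=\bot$ and $[(t-g)^+]\vee[(g-r)^+]=\top$, which is exactly where the archimedean hypothesis on $G$ enters. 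With $\widehat{g}$ in hand, $\mu_G$ is shown to be a $\mathbf{W}$-morphism by checking agreement on right rays (Lemma~\ref{Lem:3}(2)): the identities for $\widehat{g\wedge h}$, $\widehat{g+h}$ and unit preservation reduce, via Corollary~\ref{Cor:1}, to the kernel facts $[a\wedge b]=[a]\cap[b]$ and $[\,c\cdot 1\,]=\top$ for $c>0$.

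Still within part~(1), injectivity of $\mu_G$ I would prove contrapositively: if $g\not\le h$ then $(g-h)^+>0$, and the archimedean property yields an $r$ separating $[(g-r)^+]$ from $[(h-r)^+]$, so $\widehat{g}\not\le\widehat{h}$ by Lemma~\ref{Lem:3}(2). For cozero density the key observation is that $\coz\widehat{g}=\widehat{|g|}(0,\infty)=[\,|g|\,]=[g]$, so the cozero elements of $\widehat{G}$ are precisely the principal $\mathbf{W}$-kernels; since every $K\in\mathcal{M}G$ is the join of the principal kernels it contains, cozero density is immediate. The range restriction $G\to\widehat{G}$ is then a bijective $\mathbf{W}$-morphism, hence an isomorphism. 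For the universal property~(3) I would define $k:L\to M$ on the join-dense cozero generators by $k([g])=\coz\theta(g)$ and extend by $k(a)=\bigvee\{\coz\theta(g):[g]\subseteq a\}$. Well-definedness and monotonicity rest on the fact that $[g]\subseteq[h]$ (equivalently $g\in[h]$) is expressed by an algebraic condition in the language of $\mathbf{W}$ --- the description of Corollary~\ref{Cor:4} and its unbounded analogue --- and is therefore preserved by $\theta$, giving $\coz\theta(g)\le\coz\theta(h)$. That $k$ preserves finite meets uses $[g]\cap[h]=[\,|g|\wedge|h|\,]$ and $\coz(x\wedge y)=\coz x\wedge\coz y$; preservation of $\top$ uses $\top=[1]$ and $\theta(1)=1$; joins hold by construction; and commutativity $\mathcal{R}k\circ\mu_G=\theta$ is checked on right rays via Corollary~\ref{Cor:1}(3). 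Uniqueness of $k$ is forced because its values on the join-dense cozeros are prescribed. Part~(2) then follows formally: two representations yield, by~(3), frame maps in both directions whose composites satisfy the defining equation of the identity, hence equal the identity by the uniqueness clause of~(3).

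For part~(4), surjectivity of $k$ is equivalent to its image --- the subframe generated by the elements $\coz\theta(g)$ --- being all of $M$, which is precisely cozero density of $\theta$. Injectivity I would handle through the criterion that a frame map out of the regular Lindel\"of frame $L$ is one-one iff it reflects covers of $\top$ by the cozero generators; translating $\coz\widehat{g}=[g]$ and $\coz\theta(g)$ yields the displayed condition on families $K\subseteq G^+$. I expect the main obstacle to be the construction step --- simultaneously establishing the rather-below relation in condition~(1) of Lemma~\ref{Lem:1} and the kernel identities $[a\wedge b]=[a]\cap[b]$ and $[g]\subseteq[h]\iff g\in[h]$ --- since these are the archimedean facts that make both $\mu_G$ well defined and the map $k$ well defined and functorial; once they are available, the remaining verifications are routine ray-by-ray calculations.
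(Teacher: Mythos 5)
Your overall architecture is sound, and on the one part the paper actually proves --- part (4) --- your argument coincides with the paper's: both rest on the computation $k(K)=\bigvee_{K^{+}}\coz\theta(g)$ for a $\mathbf{W}$-kernel $K$, read surjectivity as cozero density, and obtain the injectivity criterion from the fact that a frame map between regular frames is one-one iff it is codense (your \enquote{reflects covers of $\top$ by the cozero generators} is exactly that, since every element of $L$ is a join of elements $\coz\widehat{g}$). For parts (1)--(3) the paper gives no proof at all, only the citation to \cite{BallHager:1991}, so there your reconstruction is doing work the paper delegates; it follows the standard construction (verify Lemma~\ref{Lem:1} for $\widehat{g}$, check morphism identities on right rays, define $k$ on the join-dense cozeros $[g]$ and extend by joins). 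One structural caveat: as stated, part (3) concerns only $\mu_G$, so to deduce (2) you need to observe that your proof of (3) uses nothing beyond the properties in (1), and hence applies to any competing representation; otherwise you only get a frame map in one direction.

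Two of your reductions, however, would fail as written. First, additivity of $\mu_G$: the hard half of $\widehat{g+h}=\widehat{g}+\widehat{h}$ on right rays is the containment $[(g+h-r)^{+}]\subseteq\bigvee_{s}\left([(g-s)^{+}]\cap[(h-r+s)^{+}]\right)$, and this does not reduce to the facts $[a\wedge b]=[a]\cap[b]$ and $[c\cdot 1]=\top$; it requires the archimedean-quotient property of $\mathbf{W}$-kernels, Lemma~\ref{Lem:4}(1), applied to the join on the right. Relatedly, you locate the archimedean hypothesis in condition (1) of Lemma~\ref{Lem:1}, but the rather-below relation needs only disjointness together with the weak-unit axiom (Lemma~\ref{Lem:4}(2)): the witness $[(t-g)^{+}]$ works because $(t-g)^{+}+(g-r)^{+}\geq(t-r)1$ forces the join to contain $1$. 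Where archimedeanness genuinely enters is conditions (2) and (3) of Lemma~\ref{Lem:1}; for instance $(n(g-r)^{+}-1)^{+}=n(g-s)^{+}$ with $s=r+1/n$, and then Lemma~\ref{Lem:4}(1) yields $[(g-r)^{+}]=\bigvee_{s>r}[(g-s)^{+}]$. Second, your $k$ does not preserve arbitrary joins \enquote{by construction}: with $k(K)=\bigvee\{\coz\theta(g):[g]\subseteq K\}$ you must still show that $[g]\subseteq\bigvee_{I}K_{i}$ forces $\coz\theta(g)\leq\bigvee_{I}k(K_{i})$, and the pairwise criterion for $[g]\subseteq[h]$ gives nothing here. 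The clean repair, which also subsumes your well-definedness step: for any $a\in M$ the set $\{p\in\mathcal{R}M:\coz|p|\leq a\}$ is a $\mathbf{W}$-kernel of $\mathcal{R}M$ (Corollary~\ref{Cor:1}(5) supplies the archimedean condition), so its $\theta$-preimage is a $\mathbf{W}$-kernel of $G$; taking $a=\bigvee_{I}k(K_{i})$, this preimage contains every $K_{i}$, hence $\bigvee_{I}K_{i}$, hence $g$. With these two repairs your outline goes through.
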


\begin{proof}
A detailed proof may be found in \cite{BallHager:1991}; we comment only 
on part (4). For any $\mathbf{W}$-kernel $K \subseteq G$, 
\begin{align*}
k(K) &= k\left(\bigvee_{K^+}[g]\right) 
= k\left(\bigvee_{K^+}\coz\widehat{g}\right)
= k\left(\bigvee_{K^+}\widehat{g}(0,\infty)\right)
= \bigvee_{K^+}k\circ \widehat{g}(0,\infty)
= \bigvee_{K^+}\theta(g)(0,\infty) \\
&= \bigvee_{K^+}\coz\theta(g).
\end{align*}
This makes the surjectivity condition clear; the injectivity condition follows from
the fact that a frame morphism between regular frames is one-one
iff it is codense, i.e., iff the only element taken to the top of the codomain 
is the top element of the domain.  
\end{proof}

\section{Pointwise suprema defined}\label{Sec:4}

In dealing with continuous real-valued functions on a Tychonoff space $X$,
it is often important to know whether a given function $f$ is the supremum
of a given subset $K\subseteq \mathcal{C}X$, and, if so, whether this
supremum is pointwise, i.e., whether $\bigvee_{K}k\left( x\right) =f\left(
x\right) $ for all $x\in X$. In terms of the frame $\mathcal{O}X $ of open
sets of $X$, $f$ is the pointwise supremum of $K$ iff $\bigcup_{K}k^{-1}%
\left( r,\infty\right) =f\left( r,\infty\right) $ for all $r\in\mathbb{R}$.
It is the latter formulation which generalizes directly to the pointfree
setting.

\subsection{Pointwise suprema in $\mathcal{R}L$\label{Subsec:5}}

\begin{definition}
Let $L$ be a frame, and let $K$ be a subset and $f$ an element of $\mathcal{R%
}L$. We say that $f$ is the \emph{pointwise supremum (infimum) of $K$}, and
write $f = \bigvee^\bullet K$ ($f = \bigwedge^\bullet K$), provided that $%
f\left(r,\infty\right) = \bigvee_{K} k \left(r,\infty\right)$ ($%
f\left(-\infty, r\right) = \bigvee_{K} k \left(-\infty,r\right)$ ) holds in $%
L$ for all $r\in\mathbb{R}$.
\end{definition}

\begin{remarks}
\label{Rem:1}A few remarks about this definition are in order.

\begin{enumerate}
\item Observe that the frame definition coincides with the spatial
definition in case $L$ is the topology of a Tychonoff space.

\item Recall that by Lemma \ref{Lem:3} a frame map is completely determined
by its values on the right or left rays alone. This makes the appearance of
only the rays in this definition less mysterious.

\item Recall that by Lemma \ref{Lem:3} an element $g\in\mathcal{R}L$ lies
above (below) each $k\in K$ iff $k\left( r,\infty\right) \leq
g\left(r,\infty\right) $ ($g\left( -\infty,r\right) \leq k\left( -\infty
,r\right) $) for all $r\in\mathbb{R}$ and all $k \in K$.

\item It follows from the preceding remarks that $f=\bigvee K$ whenever $f =
\bigvee^\bullet K$, and dually.

\item It follows from the preceding remarks that if $f=\bigvee^{\bullet }K=g$
then $f=g$.
\end{enumerate}
\end{remarks}

We list some of the nice properties of pointwise suprema and infima .

\begin{proposition}
\label{Prop:8}Let $F$ and $K$ be subsets and let $f_{0}$ and $k_{0}$ be
elements of $\mathcal{R}L$.

\begin{enumerate}
\item $f_{0} = \bigvee^\bullet F $ iff $-f_{0} = \bigwedge^\bullet \left(
-F\right) \equiv \bigwedge_{F}^\bullet \left( -f \right)$, and dually.

\item $f_{0}=\bigvee^\bullet \left\{ f_{0}\right\} = \bigwedge^\bullet
\left\{f_0\right\}$.

\item If $f_{0} = \bigvee^\bullet F$ and $k_{0} = \bigvee^\bullet K$ then $%
f_{0}\boxdot k_{0} = \bigvee_{F,K}^\bullet \left( f\boxdot k\right) $, where 
$\boxdot$ stands for one of the $\ell$-group operations $+$, $\vee$, or $%
\wedge$.

\item If $f_{0} = \bigvee^\bullet F$ and $0\leq r\in\mathbb{R}$ then $rf_{0}
= \bigvee_{F}^\bullet rf$.
\end{enumerate}
\end{proposition}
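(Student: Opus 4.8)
The plan is to reduce every assertion to an identity about the values of the relevant frame maps on right rays (and, for the infima in (1) and (2), on left rays), since by Lemma \ref{Lem:3} a frame map in $\mathcal{R}L$ is determined by those values and the pointwise-supremum condition is phrased ray by ray. The engine throughout is the lifting formula of Theorem \ref{Thm:2}, from which I would first extract the local descriptions of negation, join, sum, and positive scaling on a right ray:
\begin{gather*}
(-f)(r,\infty) = f(-\infty,-r), \qquad (f \vee g)(r,\infty) = f(r,\infty) \vee g(r,\infty), \\
(f+g)(r,\infty) = \bigvee_{s+t=r} \big( f(s,\infty) \wedge g(t,\infty) \big), \qquad (rf)(s,\infty) = f(s/r,\infty)\ \text{for } r>0,
\end{gather*}
together with the meet formula $(f\wedge g)(r,\infty) = f(r,\infty)\wedge g(r,\infty)$ already recorded in Corollary \ref{Cor:1}(4). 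Each is a one- or two-line computation: for $U=(r,\infty)$ one determines which tuples $\vec U$ satisfy $w(\vec U)\subseteq U$ in the supremum of Theorem \ref{Thm:2}, and the stated value falls out after using that $f$ and $g$ are frame maps.

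Granting these, (2) is immediate: the pointwise supremum and infimum of the singleton $\{f_0\}$ are computed ray by ray as the one-term joins $f_0(r,\infty)$ and $f_0(-\infty,r)$. For (1), I would substitute the negation formula into the definitions. Assuming $f_0(s,\infty)=\bigvee_{f\in F} f(s,\infty)$ for every $s$ and specializing to $s=-r$ yields
\[
(-f_0)(-\infty,r)=f_0(-r,\infty)=\bigvee_{f\in F} f(-r,\infty)=\bigvee_{f\in F}(-f)(-\infty,r),
\]
which says precisely that $-f_0=\bigwedge^\bullet(-F)$; the converse and the dual statement are obtained the same way.

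For (3) I would handle each operation $\boxdot \in \{\wedge,\vee,+\}$ separately, rewriting $\bigvee_{F,K}(f\boxdot k)(r,\infty)$ by the corresponding local formula and then collapsing it with the frame distributive law. The meet and join cases are short, since
\[
\bigvee_{f\in F,\,k\in K}\big(f(r,\infty)\boxdot k(r,\infty)\big)=\Big(\bigvee_{f} f(r,\infty)\Big)\boxdot\Big(\bigvee_{k} k(r,\infty)\Big)=f_0(r,\infty)\boxdot k_0(r,\infty),
\]
where for $\boxdot=\wedge$ this is the infinite distributive law of the frame and for $\boxdot=\vee$ it is merely a rearrangement of joins. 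The addition case is the one demanding care, and I expect it to be the main (if still routine) obstacle: beginning from $\bigvee_{f,k}\bigvee_{s+t=r}\big(f(s,\infty)\wedge k(t,\infty)\big)$, I would interchange the two layers of suprema and push the outer join through the binary meet, reaching
\[
\bigvee_{s+t=r}\Big(\bigvee_{f} f(s,\infty)\Big)\wedge\Big(\bigvee_{k} k(t,\infty)\Big)=\bigvee_{s+t=r} f_0(s,\infty)\wedge k_0(t,\infty)=(f_0+k_0)(r,\infty).
\]
The only points to verify are that the interchange of suprema and the distribution of $\bigvee_{f,k}$ across the meet are both legitimate, which they are in any frame.

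Finally (4) is a direct application of the scaling formula: for $r>0$ and every $s$,
\[
(rf_0)(s,\infty)=f_0(s/r,\infty)=\bigvee_{f\in F} f(s/r,\infty)=\bigvee_{f\in F}(rf)(s,\infty),
\]
so $rf_0=\bigvee_F^\bullet rf$, while the degenerate case $r=0$ is trivial.
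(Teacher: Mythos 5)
Your proposal is correct and follows essentially the same route as the paper: both reduce the pointwise conditions to ray-by-ray computations via Theorem \ref{Thm:2}, using the local formulas for negation, scaling, and the $\ell$-group operations, with the addition case in (3) handled by interchanging suprema and invoking frame distributivity. The paper merely states the negation and scaling identities and works out only the $+$ case of (3) in detail, declaring the rest similar, so your write-up is a slightly more explicit version of the same argument.
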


\begin{proof}
(1) follows from the fact that $\left( -f\right) \left(
-\infty,r\right)=f\left( -r,\infty\right) $ for any $f\in \mathcal{R}L$ and $%
r\in\mathbb{R}$, as can be readily checked with the aid of Theorem \ref%
{Thm:2}. (2) is trivial. To prove part (3) for the $+$ operation, first
observe that for $r\in\mathbb{R}$, 
\begin{gather*}
\bigvee_{F,K}\left( f+k\right) \left( r,\infty\right) =
\bigvee_{F,K}\bigvee_{ U_{1}+U_{2}\subseteq\left( r,\infty\right)}
\left(f\left( U_{1}\right) \wedge k\left( U_{2}\right) \right) .
\end{gather*}
But if $U_{1}+U_{2}\subseteq\left( r,\infty\right) $ then both $U_{i}$'s are
bounded below, say $U_{1}\subseteq\left( s,\infty\right) $ and $%
U_{2}\subseteq\left( r-s,\infty\right) $ for some $s\in\mathbb{R}$.
Therefore this join works out to 
\begin{align*}
\bigvee_{F,K}\bigvee_s\left(f(s,\infty) \wedge k(r - s, \infty)\right) &=
\bigvee_s\bigvee_{F,K}\left(f(s,\infty) \wedge k(r - s, \infty)\right) =
\bigvee_s \left(f_0(s, \infty) \wedge k_0(r - s, \infty)\right) \\
&= \left(f_0 + k_0 \right)(r, \infty).
\end{align*}
The proofs of (3) for the join and meet operations are similar. Finally, to
verify (4) simply note that if $r>0$ then $\left( rf\right)
\left(s,\infty\right) = f\left( s/r,\infty\right) $ for $f\in \mathcal{R}L$
and $s\in\mathbb{R}$, as may be easily seen using Theorem \ref{Thm:2}.
\end{proof}

\subsection{Pointwise suprema in $\mathbf{W}$\label{Subsec:6}}

Having formulated the notion of pointwise supremum in $\mathcal{R}L$, let us
now generalize it to abstract $\mathbf{W}$-objects.

\begin{definition}[First definition of pointwise supremum in $\mathbf{W}$]
For $F\subseteq G\in\mathbf{W}$ and $f_{0}\in G$, we shall say that \emph{$%
f_{0}$ is the pointwise supremum (infimum) of $F$}, and write $f_{0}
=\bigvee^\bullet F$ ($f_0 = \bigwedge^\bullet F$), if the corresponding
statement holds in $\widehat{G}$, i.e., if $\widehat{f_{0}} =
\bigvee_{F}^\bullet \widehat{f}$ ($\widehat{f_{0}} = \bigwedge_{F}^\bullet 
\widehat{f}$).
\end{definition}

Pointwise suprema can be characterized concretely
by use of the details of the Madden representation (see Subsection 
\ref{Subsec:1}).

\begin{proposition}\label{Prop:1}
Let $F$ be a subset and $f_{0}$ an element of a $\mathbf{W}$-object $G$.
Then 
\begin{align*}
f_{0} & = {\bigvee\nolimits}^\bullet F \iff \forall~r \in \mathbb{R}~\left(%
\left[ \left(f-r\right)^{+} :f\in F\right] = \left[ \left( f_{0}-r\right)
^{+} \right] \right), \\
f_{0} & = {\bigwedge\nolimits}^\bullet F \iff \forall~r \in \mathbb{R}~\left(%
\left[ \left(r-f\right)^{+} :f\in F\right] = \left[ \left( r-f_{0}\right)
^{+} \right]\right).
\end{align*}
\end{proposition}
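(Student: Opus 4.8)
The plan is to establish the supremum characterization by a direct chain of equivalences that simply unwinds the three pertinent definitions, and then to derive the infimum characterization by the duality of Proposition~\ref{Prop:8}(1).

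For the supremum, I would begin from the first definition of pointwise supremum in $\mathbf{W}$: the statement $f_0 = \bigvee^\bullet F$ means precisely that $\widehat{f_0} = \bigvee_F^\bullet \widehat{f}$ holds in $\widehat{G}$. Because the pointwise supremum in $\mathcal{R}L$ is a condition referring only to the frame $L = \mathcal{M}G$, I can read this through the $\mathcal{R}L$ definition, so that it is equivalent to
\begin{equation*}
\widehat{f_0}(r,\infty) = \bigvee\nolimits_F \widehat{f}(r,\infty) \quad \text{in } L, \text{ for every } r \in \mathbb{R}.
\end{equation*}
Now I substitute the defining formula of the Madden representation, $\widehat{g}(r,\infty) = [(g-r)^+]$, on both sides. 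The join on the right is computed in $\mathcal{M}G$, where $\bigvee_F [(f-r)^+] = [(f-r)^+ : f \in F]$ by the description of joins in the Madden frame. Thus the displayed condition becomes $[(f_0 - r)^+] = [(f-r)^+ : f \in F]$ for every $r$, which is exactly the asserted right-hand side. Each link in this chain is an equivalence, so the supremum case is established.

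For the infimum, I would apply Proposition~\ref{Prop:8}(1) in its dual form: $f_0 = \bigwedge^\bullet F$ iff $-f_0 = \bigvee^\bullet(-F)$. Feeding $-f_0$ and $-F$ into the supremum characterization just proved yields, for every $r$, the equality $[\,((-f)-r)^+ : f \in F\,] = [\,(-f_0 - r)^+\,]$. Writing $s = -r$ and using $((-f)-r)^+ = (s - f)^+$ (and likewise for $f_0$), together with the fact that $s$ ranges over all of $\mathbb{R}$ as $r$ does, converts this verbatim into the claimed infimum condition.

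The proof carries no serious obstacle — it is a routine composition of equivalences — so the only point demanding care is the sign bookkeeping in the infimum case: one must track the identity $((-f)-r)^+ = (s-f)^+$ under the substitution $r = -s$ and confirm that the reindexing is a bijection of $\mathbb{R}$. The duality route is preferable to arguing the infimum case directly, since a direct argument would force one to compute $\widehat{g}(-\infty, r)$ via Lemma~\ref{Lem:3}(1), introducing pseudocomplements and thereby obscuring what is otherwise a transparent unwinding of definitions.
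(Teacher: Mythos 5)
Your proof is correct, and for the supremum half it coincides with the paper's argument: the paper's entire proof consists of citing the two Madden-representation formulas $\widehat{f}(r,\infty)=[(f-r)^+]$ and $\widehat{f}(-\infty,r)=[(r-f)^+]$, after which both equivalences follow by exactly the unwinding you perform (the first definition of pointwise supremum via $\widehat{G}$, together with $\bigvee_F[(f-r)^+]=[(f-r)^+:f\in F]$ in $\mathcal{M}G$). Where you genuinely diverge is the infimum half. The paper invokes the left-ray formula $\widehat{f}(-\infty,r)=[(r-f)^+]$ directly---a fact it states without proof, since Subsection \ref{Subsec:1} defines $\widehat{g}$ only on right rays---whereas you deduce the infimum characterization from the supremum one via the duality of Proposition \ref{Prop:8}(1) and the substitution $s=-r$; your sign bookkeeping $((-f)-r)^+=(s-f)^+$ is right, and the reindexing $r\mapsto -r$ is indeed a bijection of $\mathbb{R}$. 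What your route buys is self-containedness: everything you use is actually proved in the paper. The one step you should make explicit is that Proposition \ref{Prop:8}(1) is stated for $\mathcal{R}L$, so transporting it to an abstract $G$ requires $\widehat{-f}=-\widehat{f}$, which holds because $\mu_G$ is a $\mathbf{W}$-morphism. In fact the two routes are faces of the same computation: the paper's unproved left-ray formula follows from precisely your sign flip, namely $\widehat{f}(-\infty,r)=(-\widehat{f})(-r,\infty)=\widehat{-f}(-r,\infty)=[((-f)+r)^+]=[(r-f)^+]$.
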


\begin{proof}
In the Madden representation $G\rightarrow\widehat{G}$, 
\[
\widehat{f}\left(r,\infty\right) =\left[ \left( f-r\right) ^{+} \right]
\quad \text{and} \quad 
\widehat{f}\left( -\infty,r\right) =\left[ \left( r-f\right)^{+} \right]. \qedhere
\]
\end{proof}

$\mathbf{W}$-morphisms preserve pointwise suprema.

\begin{proposition}\label{Prop:7}
If $\theta:G\rightarrow H$ is a $\mathbf{W}$-morphism and if $%
f_{0} = \bigvee^\bullet F$ in $G$ then $\theta\left( f_{0}\right) = \bigvee
_{F}^\bullet \theta\left( f\right) $ in $H$.
\end{proposition}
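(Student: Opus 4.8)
The plan is to transport the defining ray identity for $\bigvee^\bullet$ across the frame map that $\theta$ induces on Madden frames, exploiting the fact that frame maps preserve arbitrary joins. Unwinding the definition, the hypothesis $f_0 = \bigvee^\bullet F$ in $G$ says precisely that
\begin{equation*}
\widehat{f_0}(r,\infty) = \bigvee_{f \in F} \widehat{f}(r,\infty) \quad \text{in } \mathcal{M}G, \text{ for every } r \in \mathbb{R},
\end{equation*}
and the goal is to establish the corresponding identity in the Madden frame $\mathcal{M}H$ of $H$, namely that the $\mu_H$-image of $\theta(f_0)$ is the pointwise join of the $\mu_H$-images of the $\theta(f)$.

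First I would manufacture the relevant frame map. Composing $\theta$ with the Madden injection $\mu_H : H \to \widehat{H} \subseteq \mathcal{R}(\mathcal{M}H)$ produces a $\mathbf{W}$-morphism $\mu_H \circ \theta : G \to \mathcal{R}(\mathcal{M}H)$. Applying Theorem \ref{Thm:6}(3) to this morphism (with $M = \mathcal{M}H$) yields a unique frame map $k : \mathcal{M}G \to \mathcal{M}H$ making the square commute, i.e. $\mathcal{R}k \circ \mu_G = \mu_H \circ \theta$. Since $\mathcal{R}k$ acts by postcomposition, $g \mapsto k \circ g$, evaluating this equality of frame maps at a right ray gives, for each $f \in G$ and each $r \in \mathbb{R}$, the transfer formula
\begin{equation*}
\widehat{\theta(f)}(r,\infty) = k\bigl(\widehat{f}(r,\infty)\bigr),
\end{equation*}
where on the left the hat denotes the Madden embedding computed in $H$.

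The conclusion is then immediate: because $k$ is a frame map it preserves arbitrary suprema, so applying $k$ to the hypothesis and using the transfer formula gives
\begin{align*}
\widehat{\theta(f_0)}(r,\infty) = k\bigl(\widehat{f_0}(r,\infty)\bigr) &= k\Bigl(\bigvee_{f \in F} \widehat{f}(r,\infty)\Bigr) \\
&= \bigvee_{f \in F} k\bigl(\widehat{f}(r,\infty)\bigr) = \bigvee_{f \in F} \widehat{\theta(f)}(r,\infty)
\end{align*}
for every $r$, which is exactly the assertion $\theta(f_0) = \bigvee_F^\bullet \theta(f)$ in $H$. The infimum case follows either by the same argument on left rays or, more cheaply, from Proposition \ref{Prop:8}(1) together with the fact that $\theta$ preserves negation. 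I expect the only real work to lie in the second step, namely verifying that the frame map supplied by Theorem \ref{Thm:6}(3) is the correct one and that commutativity of its square translates into the ray-wise transfer formula; once that formula is in hand, the join-preservation of $k$ does the rest automatically.
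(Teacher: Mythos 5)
Your proof is correct and is essentially the paper's own argument: both identify $G$ and $H$ with their Madden representations, invoke Theorem \ref{Thm:6}(3) to produce the frame map $k:\mathcal{M}G\to\mathcal{M}H$ realizing $\theta$ (so that $\widehat{\theta(f)}(r,\infty)=k(\widehat{f}(r,\infty))$), and then let the join-preservation of the frame map $k$ carry the ray identity from $L$ to $M$. The only difference is expository — you spell out the factorization $\mathcal{R}k\circ\mu_G=\mu_H\circ\theta$ that the paper leaves implicit.
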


\begin{proof}
Identify $G$ and $H$ with their Madden representations in $\mathcal{R}L$ and 
$\mathcal{R}M$, where $L \equiv \mathcal{M}G$ and $M \equiv \mathcal{M} H$.
Then there is a unique frame map $\mathcal{M}\theta\equiv k:L\rightarrow M$
which realizes $\theta$ in the sense that $\theta\left(g\right) \left(
U\right) =k \circ g\left( U\right) $ for all $U\in\mathcal{O}\mathbb{R}$ and 
$g\in G$. Therefore we have, for $r\in\mathbb{R}$,%
\begin{align*}
\bigvee_{F}\theta\left( f\right) \left( r,\infty\right) &= \bigvee_{F}k
\circ f\left( r,\infty\right) = k \left( \bigvee_{F}f\left(
r,\infty\right)\right) = k \circ f_{0}\left(r,\infty\right) = \theta\left(
f_{0}\right) \left(r,\infty\right) . \qedhere
\end{align*}
\end{proof}

It is a surprising fact that the converse of Proposition \ref{Prop:7} holds
as well. In general, the supremum of a subset $F$ of a $\mathbf{W}$-object $%
G $ depends on the context. If, for instance, $G$ is a subobject of $H$, it
may well happen that $f_{0}=\bigvee F$ for some $f_{0}\in G$ but $%
f_{0}\neq\bigvee F$ in $H$. The point of Proposition \ref{Prop:9} is that it
is precisely the pointwise suprema which are context free.

\begin{proposition}
\label{Prop:9}Let $F$ be a subset and $f_{0}$ an element in some 
$\mathbf{W}$-object $G$. Then $f_{0} = \bigvee^\bullet F$ iff $\theta\left( f_{0}\right)
= \bigvee_{F}\theta\left( f\right) $ for every $\mathbf{W}$-morphism $\theta$
out of $G$, and dually.
\end{proposition}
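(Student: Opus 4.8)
The plan is to prove the two implications separately, the forward one being immediate and the converse carrying the real content. For the forward direction, suppose $f_{0} = \bigvee^\bullet F$. Proposition \ref{Prop:7} tells us that any $\mathbf{W}$-morphism $\theta$ out of $G$ satisfies $\theta(f_{0}) = \bigvee_{F}^\bullet \theta(f)$, and a pointwise supremum is in particular an ordinary supremum (the $\mathbf{W}$-analogue of Remark \ref{Rem:1}(4), read in the codomain). Hence $\theta(f_{0}) = \bigvee_{F} \theta(f)$, as required.

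For the converse, assume $\theta(f_{0}) = \bigvee_{F} \theta(f)$ for every $\mathbf{W}$-morphism $\theta$ out of $G$. Applying this to $\theta = \mathrm{id}_{G}$ already yields $f_{0} = \bigvee F$ in $G$; in particular $f \le f_{0}$ for every $f \in F$. I pass to the Madden representation, set $L \equiv \mathcal{M}G$, and for a fixed $r \in \mathbb{R}$ put $a \equiv \bigvee_{f \in F}\widehat{f}(r,\infty) = [\,(f-r)^{+} : f \in F\,]$ and $b \equiv \widehat{f_{0}}(r,\infty) = [\,(f_{0}-r)^{+}\,]$. Since $f \le f_{0}$ we have $a \le b$. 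By Proposition \ref{Prop:1}, proving $f_{0} = \bigvee^\bullet F$ amounts to showing $a = b$ for every $r$, so I will suppose $a < b$ for some $r$ and derive a contradiction.

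The key step is to manufacture a $\mathbf{W}$-morphism out of $G$ that converts this strict frame inequality into a genuine failure of an ordinary supremum. For this I take the closed quotient $\kappa : L \to {\uparrow}a$, $x \mapsto x \vee a$ (a frame surjection with bottom element $a$), and set $\theta \equiv \mathcal{R}\kappa \circ \mu_{G} : G \to \mathcal{R}({\uparrow}a)$. Then $\theta(g)(r,\infty) = \widehat{g}(r,\infty) \vee a$ for every $g \in G$. For $f \in F$ this gives $\theta(f)(r,\infty) = a = \bot_{{\uparrow}a}$, so by monotonicity $\theta(f)(t,\infty) = \bot$ for all $t \ge r$, whence $\theta(f) \le \mathbf{r}$ (the constant frame map with value $r$) by Lemma \ref{Lem:3}(2); whereas $\theta(f_{0})(r,\infty) = b \vee a = b \ne a = \bot_{{\uparrow}a}$, so $\theta(f_{0}) \not\le \mathbf{r}$. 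Thus the ever-present constant $\mathbf{r} \in \mathcal{R}({\uparrow}a)$ is an upper bound of $\theta[F]$ that fails to dominate $\theta(f_{0})$, so $\theta(f_{0}) \ne \bigvee_{F} \theta(f)$, contradicting the hypothesis. This forces $a = b$ for every $r$, hence $f_{0} = \bigvee^\bullet F$.

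The dual statement follows formally: by Proposition \ref{Prop:8}(1), $f_{0} = \bigwedge^\bullet F$ iff $-f_{0} = \bigvee^\bullet(-F)$, and since every $\mathbf{W}$-morphism commutes with negation, the equivalence just proved, applied to $-F$ and $-f_{0}$, delivers the infimum version. The main obstacle is the converse, and within it the one genuinely creative move is recognizing that the closed quotient $L \to {\uparrow}a$ collapses exactly the joinands $\widehat{f}(r,\infty)$ ($f \in F$) to the bottom while keeping $\widehat{f_{0}}(r,\infty)$ strictly above it, so that the constant $\mathbf{r}$ becomes a smaller upper bound and exposes $\theta(f_{0})$ as not the supremum.
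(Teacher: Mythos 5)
Your proof is correct, but the converse takes a genuinely different route from the paper's. Both arguments share the same underlying strategy: manufacture a frame quotient that collapses $a \equiv \bigvee_{F}\widehat{f}(r,\infty)$ to the bottom while keeping something strictly above it alive, and then use a constant function as an upper bound of the image of $F$ which the image of $f_{0}$ fails to lie below. The paper implements this by first translating so that $f_{0}=0$, embedding $L$ into a boolean frame $M$ (an appeal to Johnstone for the existence of such an embedding), taking the complement $b$ of the image of $a$ in $M$, and passing to the open quotient $c \mapsto c \wedge b$ onto $\downarrow\! b$; the constant $r/2$ then sits strictly between $\psi[F]$ and $\psi(0)$. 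You instead use the closed quotient $\kappa : L \to {\uparrow}a$, $x \mapsto x \vee a$, of the Madden frame itself, which kills precisely the elements below $a$ and keeps $b \equiv \widehat{f_{0}}(r,\infty) > a$ alive, so the constant $\mathbf{r}$ does the same job. Your construction is more economical: no boolean embedding, no complementation, and no normalization $f_{0}=0$ are needed, and the kernel of the quotient is exactly what the argument requires. The one point you should make explicit is that ${\uparrow}a$ is completely regular, since the paper's standing convention is that $\mathcal{R}$ is applied only to completely regular frames; this is automatic, because frame surjections preserve the rather-below and completely-below relations, so every quotient of a completely regular frame is completely regular (the paper's boolean detour sidesteps this issue, complete regularity being trivial in boolean frames). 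With that remark added, your argument is a correct and somewhat shorter, more self-contained proof of the same equivalence, including your formal derivation of the dual statement via Proposition \ref{Prop:8}(1).
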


\begin{proof}
Proposition \ref{Prop:7} is the forward implication of this equivalence. So
suppose $f_{0}$ is not the pointwise supremum of $F$, let $L$ be the Madden
frame of $G$, and identify $G$ with its Madden representation 
$\widehat{G}\leq\mathcal{R}L$. We may assume that $f_{0}=0$, 
since otherwise we may
replace $F$ by $F-f_{0}\equiv\left\{ f-f_{0}:f\in F\right\} $ by 
Proposition~\ref{Prop:8}. We must find a $\mathbf{W}$-morphism 
$\theta:G\rightarrow H$ such that $\bigvee _{F}\theta\left( f\right) \neq0$.

Let $i:L\rightarrow M$ be a frame embedding of $L$ into a boolean frame $M$.
(Such an embedding exists; see \cite[II, 2.6]{Johnstone:1982}.) Since 
$\bigvee^\bullet  F \neq0$ there exists some 
$r\in\mathbb{R}$ such that 
\begin{equation*}
a\equiv{\bigvee\nolimits}_{F}f\left( r,\infty\right) <0\left(
r,\infty\right) =\left\{ 
\begin{array}{ll}
\bot & \text{if }r\geq0 \\ 
\top & \text{if }r<0%
\end{array}
\right. .
\end{equation*}
Note that $0\left( r,\infty\right) $ must be $\top$, hence $r<0$. 
Now $i\left( a\right) $ has complement $b$ in $M$; note that 
$b>\bot$ because $i\left( a\right) <\top$ since $a<\top$ and $i$ 
is one-one.

Let $k:M\rightarrow{}\downarrow\!b$ designate the open quotient frame 
map $c\longmapsto c\wedge b$, $c\in M$, let 
$H\equiv\mathcal{R}\left({\downarrow b}\right) $, and let 
$\psi\equiv\mathcal{R}(k\circ i):\mathcal{R}L\to H$. 
We claim that the desired map $\theta$ is the restriction of $\psi$ to 
$\widehat{G} \approx G$. For if $f\in F$ then
\begin{equation*}
\psi\left( f\right) \left( r,\infty\right) = k \circ i \circ f \left(
r,\infty\right) = i \circ f\left( r,\infty\right) \wedge b=\bot
\end{equation*}
since $i \circ f \left( r,\infty\right) \leq i\left( a\right) $ and 
$i\left(a\right) \wedge b=\bot$. It follows that for $s\in\mathbb{R}$, 
\begin{equation*}
\psi\left( f\right) \left( s,\infty\right) 
\leq\left( r/2\right) \left(s,\infty\right) 
=\begin{cases}
     \bot & \text{if $s\geq r/2$} \\ 
     \top & \text{if $s<r/2$} \end{cases},
\end{equation*}
which implies by Lemma \ref{Lem:3}(2) that $\psi\left( f\right) \leq r/2<0$
for all $f\in F$, meaning that $\bigvee_{F}\psi\left( f\right) \neq0$. This
completes the proof.
\end{proof}

Some caution is required when dealing with pointwise suprema. If $F\subseteq
G$ and $f_{0}\in G$ are such that $f_{0}=\bigvee^{\bullet }F$ in some $%
\mathbf{W}$-extension $H\geq G$ then $f_{0}=\bigvee F$ in $G$, of course,
but the join may not be pointwise in $G$.

\begin{example}
Let $X$ be $\omega+1$, the one-point compactification of the discrete space
of finite ordinals. Let $G$ be $\mathcal{C}X$ and let 
\begin{equation*}
H\equiv\left\{ g+rh_{0}:g\in G,~r\in\mathbb{R}\right\} ,
\end{equation*}
where $h_{0}\equiv\left( n\longmapsto n\right) $ and $h_{0}\left(\omega%
\right) =\infty$. $H$ is a $\mathbf{W}$-object in $DX$. Let $F$ be the
family of functions 
\begin{equation*}
f_{n}\left( k\right) \equiv 
\begin{cases}
1 & \text{if $k\leq n$} \\ 
0 & \text{if $k>n$}%
\end{cases}
\qquad n<\omega.
\end{equation*}
Then it is not hard to check that $\bigvee^\bullet F=1$ in $H$ and $\bigvee
F=1$ in $G$ but the latter join is not pointwise.
\end{example}

For emphasis, we recast the definition of pointwise supremum in an arbitrary 
$\mathbf{W}$-object.

\begin{definition}[Second definition of pointwise supremum in $\mathbf{W}$]
For $F\subseteq G\in\mathbf{W}$ and $f_{0}\in G$, we shall say that 
\emph{$f_{0}$ is the pointwise supremum (infimum) of $F$}, and write 
$f_{0}=\bigvee^\bullet F$ ($f_0 = \bigwedge^\bullet F$), if 
$\bigvee\theta[F]) = \theta(f_0)$ ($\bigwedge \theta(F) = \theta(f_0)$) 
for all $\mathbf{W}$-homomorphisms $\theta : G \to H$.
\end{definition}

\section{Pointwise suprema applied}\label{Sec:8}

In this section we aim to show that pointwise suprema are useful for 
characterizing important attributes of a $\mathbf{W}$-object and its 
Madden frame. We begin by using them to characterize 
those $\mathbf{W}$-objects
in which every (countable) supremum is pointwise.  
Throughout this section $G$ will represent a $\mathbf{W}$-object 
with Madden frame $L$. 

\subsection{When all existing (countable) suprema are pointwise} 
Pointwise suprema are useful for detecting whether the Madden frame of a given 
$\mathbf{W}$-object is boolean or a $P$-frame. We shall require this information 
in Section \ref{Sec:6}. 

\begin{theorem}\label{Thm:7}
Suppose that $\bigwedge \mathbb{R}^+(G) = 0$.
Then all existing (countable) suprema in $G$ are pointwise
iff $L$ is boolean (a $P$-frame).
\end{theorem}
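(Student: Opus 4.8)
The plan is to reduce ``pointwise'' to a single frame-theoretic condition and then read that condition off from the Boolean / $P$-frame hypothesis. Fix $F\subseteq G$ with $f_{0}=\bigvee F$ in $G$, and for $r\in\mathbb{R}$ put $a_{r}\equiv\bigvee_{f\in F}\widehat{f}(r,\infty)=\bigl[(f-r)^{+}:f\in F\bigr]$, the candidate value of the pointwise join on the right rays (Proposition \ref{Prop:1}). Since $f_{0}$ is an upper bound we always have $a_{r}\le\widehat{f_{0}}(r,\infty)$, and by definition $f_{0}=\bigvee^{\bullet}F$ precisely when equality holds for every $r$. First I would observe that the assignment $r\mapsto a_{r}$ automatically meets conditions (2) and (3) of Lemma \ref{Lem:1}: condition (2) is a trivial interchange of joins, while $\bigvee_{r}a_{r}=\top$ and $\bigvee_{r}a_{r}^{*}\ge\bigvee_{r}\widehat{f_{0}}(r,\infty)^{*}=\top$ follow from $a_{r}\le\widehat{f_{0}}(r,\infty)$ and the fact that $\widehat{f_{0}}$ is itself a frame map. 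Thus the only obstruction to $r\mapsto a_{r}$ being a genuine element $\overline{f}\in\mathcal{R}L$ is the rather-below condition (1), namely $a_{s}\prec a_{r}$ whenever $r<s$; and when $\overline{f}$ does exist it is $\bigvee F$ computed in $\mathcal{R}L$, with the join pointwise in $G$ iff $\overline{f}=\widehat{f_{0}}$.

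For the ``if'' direction I would dispatch condition (1) using the hypothesis on $L$. In a Boolean frame $\prec$ coincides with $\le$, so $a_{s}\prec a_{r}$ holds for free because $r\mapsto a_{r}$ is order-reversing; in the countable case over a $P$-frame each $a_{r}$ is a countable join of cozero elements $\widehat{f}(r,\infty)=\coz(f-r)^{+}$, hence is itself a cozero element, hence complemented, so again $a_{s}\le a_{r}$ gives $a_{s}\prec a_{r}$. In either case $\overline{f}\in\mathcal{R}L$ exists. It remains to see $\overline{f}=\widehat{f_{0}}$, and this is exactly where $\bigwedge\mathbb{R}^{+}(G)=0$ is used. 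If $\overline{f}<\widehat{f_{0}}$, set $p\equiv\widehat{f_{0}}-\overline{f}>0$ in $\mathcal{R}L$; choosing a small constant $\varepsilon\in\mathbb{R}^{+}(G)$ with $d_{\varepsilon}\equiv\coz(p-\varepsilon)^{+}>\bot$ and, by cozero density of $\mu_{G}$, an element $w\in G^{+}$ with $\bot<\coz\widehat{w}\le d_{\varepsilon}$, one checks that $b\equiv\widehat{w}\wedge\varepsilon$ satisfies $0<b\le p$. Then $h\equiv f_{0}-b\in G$ still dominates every $f\in F$ (because $h\ge f_{0}-p=\overline{f}\ge\widehat{f}$) yet $h<f_{0}$, contradicting minimality of $f_{0}$. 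Hence $\overline{f}=\widehat{f_{0}}$ and the join is pointwise.

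For the ``only if'' direction I argue contrapositively. If $L$ is not Boolean then, since a frame is Boolean exactly when its only dense element is $\top$, there is a dense $a<\top$; writing $a=\bigvee\{c:c\le a,\ c\ \text{cozero}\}$ and each such $c=\coz\widehat{g_{c}}$, I form $F\equiv\{(n g_{c})\wedge 1:n\in\mathbb{N},\ c\le a\ \text{cozero}\}$. A direct computation gives $a_{r}=\top$ for $r<0$, $a_{r}=a$ for $0\le r<1$, and $a_{r}=\bot$ for $r\ge 1$; since $a$ is dense but not complemented, $a\not\prec a$, so condition (1) fails and the join is not pointwise. Nevertheless $\bigvee F=1$ in $G$: any upper bound $h$ has $\widehat{h}(s,\infty)\ge a$ for $s\in[0,1)$, hence $\widehat{h}(s,\infty)$ is dense, hence $\widehat{h}(s,\infty)^{*}=\bot$, and then Lemma \ref{Lem:1}(1) forces $\widehat{h}(r,\infty)=\top$ for every $r<1$, i.e.\ $h\ge 1$. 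This exhibits an existing, non-pointwise supremum. The $P$-frame case runs along the same lines starting from a single non-complemented cozero element, so that $F=\{(n g)\wedge 1:n\in\mathbb{N}\}$ is countable.

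The step I expect to be the genuine obstacle is reconciling suprema formed in $G$ with the pointwise data living in $\mathcal{R}L$. In the ``if'' direction this is the identification $\overline{f}=\widehat{f_{0}}$, which is not formal and is precisely what the hypothesis $\bigwedge\mathbb{R}^{+}(G)=0$ (together with cozero density) is there to supply. In the ``only if'' direction the analogous difficulty is ensuring that the witnessing family actually has a supremum \emph{in $G$}: this is transparent when the offending cozero element can be taken dense (the supremum is then forced up to $1$), but for a general non-complemented cozero element $c$ the supremum of $\{(ng)\wedge 1\}$ wants to be the ``indicator'' of the regularisation $c^{**}$, which need not lie in $\widehat{G}$. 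Handling this---selecting the cozero element, or the target of the join, so that the supremum is attained in $G$---is the delicate point of the converse in the $P$-frame case.
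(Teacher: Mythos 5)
Your ``if'' direction is correct, and it takes a genuinely different (and arguably cleaner) route than the paper's. You assemble the candidate join $\overline{f}$ directly as an element of $\mathcal{R}L$ via Lemma \ref{Lem:1}, using that in a boolean frame $\leq$ implies $\prec$, and that in a $P$-frame a countable join of cozero elements is a complemented cozero element; you then rule out $\overline{f} < \widehat{f_0}$ by cozero density together with an arbitrarily small constant from $\mathbb{R}^+(G)$. The paper instead fixes a ray where pointwiseness fails, builds a ``characteristic function'' $\chi$ witnessing the gap, proves $f - k \geq \chi$ for all $k \in K$ by a Theorem \ref{Thm:2} computation, and only then invokes cozero density and small constants; both arguments spend the hypothesis $\bigwedge \mathbb{R}^+(G) = 0$ at the same spot. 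Your boolean half of the ``only if'' direction is also correct (modulo two slips: cozero density gives $a = \bigvee\{\coz\widehat{g} : \coz\widehat{g} \leq a\}$, not that every cozero below $a$ is itself some $\coz\widehat{g}$; and the citation should be to Lemma \ref{Lem:3}(1), not Lemma \ref{Lem:1}(1)), and it differs from the paper's: you exploit a dense element $a < \top$ to force the supremum of your family up to $1$, whereas the paper treats an arbitrary $a$ with a two-sided family.

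The genuine gap is exactly the one you flag in your closing paragraph: the $P$-frame half of the converse. A non-complemented cozero element $c$ of $L$ need not be dense, and then your countable family $F = \{(ng)\wedge 1 : n \in \mathbb{N}\}$ need not have a supremum in $G$ at all---the upper bounds would have to descend to the ``indicator'' of $c^{**}$, which need not lie in $\mathcal{R}L$, and the set of upper bounds can simply fail to have a least element---so no contradiction with the hypothesis that all \emph{existing} countable suprema are pointwise is obtained. This is not a peripheral detail but the crux of the converse: the entire difficulty is manufacturing countable families whose suprema are forced to exist. The paper's device for this is two-sided: alongside the elements supported in $a$ it adjoins the set $V$ of all $g \in G$ with $\coz g \leq a^*$ and $0 \leq g \leq 1$; since $\bigvee_{U \cup V} \coz g \geq a \vee a^*$ is dense, every upper bound of $U \cup V$ is forced to be $\geq 1$, so $\bigvee (U \cup V) = 1$ exists, and pointwiseness then yields $u \vee v = \top$ with $u \leq a$ and $v \leq a^*$, i.e.\ $a$ is complemented. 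Your proposal never produces this idea of padding the family on the $a^*$ side to force existence, so the countable case remains unproven. In fairness, the residual subtlety---extracting a \emph{countable} such $V$, which is what the countable hypothesis demands---is also passed over in the paper (``the same proof, mutatis mutandis''), and your last paragraph correctly identifies it as the delicate point; but identifying a difficulty is not the same as resolving it, and as it stands the $P$-frame direction of your argument is incomplete.
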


\begin{proof}
We prove this theorem in the boolean case; the same proof,
mutatis mutandis, works in the $P$-frame case. 
Assume $L$ is boolean, suppose $f$ is an element and $K$ is a subset of 
$G$ such that $f = \bigvee K$ in $G$, and assume for the sake of argument that 
$f(r, \infty) >\bigvee_K k(r, \infty) \equiv b$ for some real number $r$. 
Since $\bigvee_{s> r}f(s, \infty) = f(r, \infty)$, there is some $s > r$ 
for which $f(s,\infty) \nleq b$. Because $L$ is boolean 
$a \equiv f(s, \infty) \wedge b^* > \bot$;  
define the \enquote*{characteristic function} 
\begin{equation*}
\chi (t,\infty) \equiv 
\begin{cases}
\top \text{ if } t < 0, \\ 
a \text{ if } 0 \leq t <s - r, \\ 
\bot \text{ if } t \geq 1 ,%
\end{cases}
\qquad t \in \mathbb{R}, 
\end{equation*}
and check that $\chi$ satisfies the hypotheses of Lemma \ref{Lem:1} and so
extends to a unique member of $\mathcal{R}L$, and that, moreover, $\chi > 0$.

We claim that $f - k \geq \chi$ for all $k \in K$. To verify the claim first note 
that by Theorem \ref{Thm:2} 
$(f - k)(t,\infty) = \bigvee_{U_i}(f(U_1) \wedge k(U_2))$, 
where the join ranges over open subsets 
$U_i \subseteq \mathbb{R}$ such that $U_1 - U_2 \subseteq (0,\infty)$.
This condition implies that $U_1$ is bounded below and $U_2$ is bounded above, 
say $U_1 \subseteq (u,\infty)$ and $U_2 \subseteq (-\infty,u-t)$. Therefore
\begin{equation*}
(f - k)(t, \infty) =  \bigvee_u (f(u,\infty) \wedge k(-\infty, u - t)), 
\end{equation*}
If $t < 0$ then, since $f \geq k$ implies $f(-\infty, u-t) \leq k(-\infty, u - t)$, 
we get for any choice of $u \in \mathbb{R}$ that 
\begin{equation*}
(f - k)(t,\infty) \geq f(u,\infty) \wedge k(-\infty, u - t) \geq f(u,\infty)
\wedge f(-\infty, u - t) = \top. 
\end{equation*}
If $0 \leq t < s - r$ then $s - t > r$, hence $k(-\infty, s - t) \geq k(r,
\infty)^*$ since 
\begin{equation*}
k(-\infty, s - t) \vee k(r, \infty) = k\left((-\infty, s - t) \cup(r,
\infty) \right) = \top. 
\end{equation*}
This is relevant because $b^* = \left( \bigvee_K k(r,\infty)\right)^* =
\bigwedge_K k(r,\infty)^*$ as a result of the fact that $L$ is a complete
boolean algebra, so that. 
\begin{align*}
(f - k)(t,\infty) &\geq f(s,\infty) \wedge k(-\infty, s - t) \geq f(s,
\infty) \wedge k(r, \infty)^* \geq f(s, \infty) \wedge \bigwedge_L k(r,
\infty)^* \\
&= f(s, \infty) \wedge b^* = a = \chi(t,\infty).
\end{align*}
This proves the claim, which implies that $f > f - \chi \geq K$.  Since
$\mu_G : G \to \mathcal{R}L$ is cozero dense, there eists $0 < g \in G$
such that $\coz g \leq a$, and, by meeting $g$ with the appropriate constant
function $r \in \mathbb{R}^+(G)$ we may assume that $g \leq \chi$. In sum,
we have $f > f-g \geq K$, a violation of the assumption that 
$f = \bigvee K$ in $G$. Our only recourse is to conclude that 
$f(r,\infty) = \bigvee_K k(r,\infty)$ for all $r \in \mathbb{R}$, i.e.,
$f = \bigvee^\bullet K$.

Now suppose that all 
existing suprema in $G$ are pointwise; 
we aim to show that an arbitrary element $a \in L \equiv \mathcal{R}L$ 
is complemented. For that purpose define subsets 
\[
U \equiv \{g \in G : \coz g \leq a \text{ and $0\leq g \leq 1$}\}, 
\quad\text{and}\quad
V \equiv \{g \in G : \coz g \leq a^* \text{ and $0 \leq g \leq 1$}\}.
\]
By suitably augmenting $U$ we may assume that $0 \leq k \leq g \in U$
implies $k \in U$, and that $g \in U$ implies $ng \wedge 1 \in U$ 
for all $n$, and similarly for $V$. 

We claim that $\bigvee (U\cup V) = 1$.  If not then there exists some 
$k \in G$ such that $g \leq k < 1$ for all $g \in U \cup V$.
This means that $1 - k > 0$, hence $b \equiv \coz(1 - k) > \bot$. Since 
$\bigvee_{U \cup V}\coz g$ is a dense element of $L$, $b$ 
meets either $\bigvee_U \coz g$ or $\bigvee_V \coz g$ nontrivially, 
say $0 < g \in U$ is such that $\coz g \leq b$.  Since 
$\coz g = g(0,\infty) = \bigvee_m g(\frac{1}{m},\infty)$, 
there exists some positive integer $m$ for which 
$\bot < g(\frac{1}{m},\infty) = \coz (g - \frac{1}{m})^+ 
= \coz (mg -1)^+$.  But then
\begin{align*}
\coz(mg \wedge 1 -k)^+ &= \coz((mg - k)^+\wedge (1 - k))
=\coz(mg - k)^+ \wedge \coz(1 - k) \\
&= \coz(mg -k)^+ \wedge b
\geq \coz (mg - 1)^+ \wedge b = \coz (mg - 1)^+ > \bot.
\end{align*}
This is a contradiction, since $g \in U$ implies $mg \wedge 1 \in U$, hence
$mg \wedge 1 \leq k$. A similar argument covers the case in which
there exists some $0 < g \in V$ such that $\coz g \leq a^*$, and the two
cases together prove the claim.

Let $\bigvee_U \coz h \equiv u$ and $\bigvee_V \coz h \equiv v$.
Then because $1 = \bigvee^\bullet(U \cup V)$ we have 
\[
\top = 1(0,\infty) = \bigvee_{U \cup V}g(0,\infty)
= \bigvee_{U \cup V}\coz g = \bigvee_U\coz g \cup \bigvee_V\coz g 
= u \vee v. 
\]
Since $u \leq a$ and $v \leq a^*$ by construction, we see that $u$ and
$v$ are complementary, and that $u = a$.
\end{proof}

\subsection{Truncate sequences}

\label{Subsec:15}

The following fact plays an important role in our analysis of unconditional
pointwise completeness in Section \ref{Sec:6}.

\begin{proposition}\label{Prop:3}
For any $f\in G$, $\bigvee _{\mathbb{N}}^\bullet \left( f\wedge n\right) = f$.
\end{proposition}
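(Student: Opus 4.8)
The plan is to unwind the two layers of definition and reduce the statement to a single identity in the Madden frame $L = \mathcal{M}G$. By the definition of the pointwise supremum in $\mathbf{W}$ it suffices to verify the claim for the representation $\widehat{G} \le \mathcal{R}L$, and by the definition in $\mathcal{R}L$ together with the formula of Proposition \ref{Prop:1} this amounts to proving, for every $r \in \mathbb{R}$,
\[
\left[(f-r)^+\right] = \bigvee_{n \in \mathbb{N}} \left[((f\wedge n) - r)^+\right] \quad \text{in } L .
\]
The right-hand join is, by the description of the operations of $\mathcal{M}G$, the $\mathbf{W}$-kernel $K \equiv \left[((f\wedge n)-r)^+ : n \in \mathbb{N}\right]$ generated by the displayed family. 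Thus the whole proposition comes down to the kernel equation $\left[(f-r)^+\right] = K$.

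First I would record the elementary lattice identity $((f\wedge n) - r)^+ = (f - r)^+ \wedge (n-r)^+$, which holds in every $\ell$-group by distributivity (equivalently, by Corollary \ref{Cor:2}). Writing $g \equiv (f-r)^+ \ge 0$, the generators of $K$ are therefore $g \wedge (n-r)$; these vanish for $n \le r$ and equal $g \wedge (n-r)$ for $n > r$. The inclusion $K \subseteq [g]$ is immediate, since each generator lies between $0$ and $g$ while $[g]$ is convex.

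The reverse inclusion $[g] \subseteq K$ is the crux, and for it it is enough to show $g \in K$. Choosing $n$ large enough that $n - r \ge 1$, convexity of $K$ gives $g \wedge 1 \in K$, because $0 \le g \wedge 1 \le g \wedge (n-r) \in K$. Now the unit condition of Lemma \ref{Lem:4}(2), which $K$ satisfies precisely because it is a $\mathbf{W}$-kernel, yields $g \in K$ at once. (Equivalently: in the $\mathbf{W}$-quotient $G/K$ the image of $g$ meets the unit trivially, and since that image is nonnegative and the unit is a weak order unit, it must vanish.) Hence $[g] \subseteq K$, completing the equality.

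The one step that requires care — and the place where the full $\mathbf{W}$-structure, rather than mere $\ell$-group structure, enters — is exactly the passage $g \wedge 1 \in K \Rightarrow g \in K$. This is the axiom distinguishing $\mathbf{W}$-kernels from ordinary archimedean $\ell$-ideals, and it is precisely what forces the truncates $f \wedge n$ to recapture $f$ in the pointwise sense; without a weak unit the analogous statement would fail. Everything else is bookkeeping with Proposition \ref{Prop:1} and elementary lattice manipulation, so I anticipate no further obstacle.
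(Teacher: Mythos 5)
Your proof is correct, but it takes a genuinely different route from the paper's. The paper argues entirely inside $\mathcal{R}L$: identifying $G$ with $\widehat{G}$, it computes via Corollary \ref{Cor:1}(4) that $(f\wedge n)(r,\infty) = f(r,\infty)\wedge n(r,\infty)$, which equals $\bot$ for $r\geq n$ and $f(r,\infty)$ for $r<n$, so that $\bigvee_{\mathbb{N}}(f\wedge n)(r,\infty) = f(r,\infty)$ — a two-line frame calculation that never mentions kernels. You instead pass through Proposition \ref{Prop:1}, translating the claim into the kernel identity $\left[(f-r)^+\right] = \left[((f\wedge n)-r)^+ : n\in\mathbb{N}\right]$ in $\mathcal{M}G$, and prove it from the lattice identity $((f\wedge n)-r)^+ = (f-r)^+\wedge(n-r)^+$, convexity (giving the inclusion into $[(f-r)^+]$), and the weak-unit axiom of $\mathbf{W}$-kernels (giving the reverse). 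Both arguments are sound; note only that your appeal to Lemma \ref{Lem:4}(2) uses the converse direction — that every $\mathbf{W}$-kernel satisfies the unit condition — which is true and implicit in the paper's proof of that lemma, since a $\mathbf{W}$-morphism carries $1$ to a weak unit. There is also no circularity with the paper's later proposition that $\mathbf{W}$-kernels are exactly the pointwise closed convex $\ell$-subgroups, even though that proposition's proof invokes Proposition \ref{Prop:3}: the direction of Lemma \ref{Lem:4}(2) you need does not depend on it. What your route buys is a conceptual dividend the paper's computation hides — it isolates the exact point where the weak unit does its work, and shows the statement is really about $\mathbf{W}$-kernels rather than about the representation. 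What the paper's route buys is brevity and uniformity with the surrounding calculations in $\mathcal{R}L$.
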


\begin{proof}
Identify $G$ with $\widehat{G} \leq \mathcal{R}L$.
For any $r\in\mathbb{R}$ we have by Corollary \ref{Cor:1}(4) that 
\begin{equation*}
\left( f\wedge n\right) \left( r,\infty\right) =f\left( r,\infty\right)
\wedge n\left( r,\infty\right) =\left\{ 
\begin{array}{ll}
\bot & \text{if }r\geq n \\ 
f\left( r,\infty\right) & \text{if }r<n%
\end{array}
\right. .
\end{equation*}
Hence $\bigvee_{\mathbb{N}}\left( f\wedge n\right) \left( r,\infty\right) =
f\left( r,\infty\right) $ for all $r \in \mathbb{R}$.
\end{proof}

Proposition \ref{Prop:3} raises an important question: which sequences in 
$G$ are sequences of truncates of a member of $\mathcal{R}L$, so
called truncate sequences?

\begin{proposition}\label{Prop:12} 
Let $\{g_n\} \subseteq G^+$ be the sequence of truncates of 
$h \in\mathcal{R}L^+$, i.e., $\widehat{g}_n = h \wedge n $ for all $n$. Then

\begin{enumerate}
\item $g_{n + 1} \wedge n = g_n$ in $G$, and

\item $\bigvee_n \widehat{g}_n(-\infty,n) = \top$ in $L$.
\end{enumerate}

Conversely, any sequence in $G$ having these two properties is
the sequence of truncates of some $h \in \mathcal{R}L$.
\end{proposition}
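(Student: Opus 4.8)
The plan is to work entirely inside the Madden representation, identifying $G$ with $\widehat{G}\le\mathcal{R}L$ through the injection $\mu_G$; recall that $\mu_G$ preserves $\wedge$ and carries each integer constant $n$ to the constant frame map $n\in\mathcal{R}L$. For the forward direction assume $\widehat{g}_n=h\wedge n$. Statement (1) is then immediate from injectivity of $\mu_G$, since $\widehat{g_{n+1}\wedge n}=\widehat{g}_{n+1}\wedge n=(h\wedge(n+1))\wedge n=h\wedge n=\widehat{g}_n$. For (2), note $n(-\infty,n)=\bot$, and one checks from Theorem \ref{Thm:2} that $(h\wedge n)(-\infty,r)=h(-\infty,r)\vee n(-\infty,r)$; hence $\widehat{g}_n(-\infty,n)=h(-\infty,n)$ and $\bigvee_n\widehat{g}_n(-\infty,n)=\bigvee_n h(-\infty,n)=h\!\left(\bigcup_n(-\infty,n)\right)=h(\mathbb{R})=\top$.

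The converse is the substance of the proposition, and the task is to build an $h\in\mathcal{R}L$ out of the sequence $\{g_n\}$. I would define $h$ on right rays by $h(r,\infty)\equiv\widehat{g}_n(r,\infty)$ for any integer $n>r$. This is well defined precisely by hypothesis (1): from $\widehat{g}_n=\widehat{g}_{n+1}\wedge n$ and $n(r,\infty)=\top$ for $r<n$ we get $\widehat{g}_n(r,\infty)=\widehat{g}_{n+1}(r,\infty)$ whenever $r<n$, so the assigned value does not depend on the chosen $n>r$. It then remains to verify that $r\mapsto h(r,\infty)$ satisfies the three conditions of Lemma \ref{Lem:1}, after which it extends to a unique frame map $h\in\mathcal{R}L$.

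Conditions (1) and (2) of Lemma \ref{Lem:1} are inherited locally: given $r<s$, choose an integer $n>s$, so that $h$ agrees with the genuine frame map $\widehat{g}_n$ on all rays $(t,\infty)$ with $t<n$, and both the rather-below relation and the join-continuity relation transfer from $\widehat{g}_n$ to $h$ (here one uses that $h(\cdot,\infty)$ is decreasing, so the tail $t\ge n$ cannot affect the relevant suprema). The main obstacle is the density clause $\bigvee_r h(r,\infty)^{\ast}=\top$ of Lemma \ref{Lem:1}(3), and this is exactly where hypothesis (2) is consumed. The crucial identity is that, by Lemma \ref{Lem:3}(1) and the definition of $h$, $\widehat{g}_n(-\infty,n)=\bigvee_{s<n}\widehat{g}_n(s,\infty)^{\ast}=\bigvee_{s<n}h(s,\infty)^{\ast}$; joining over all $n$ and using that every real number lies below some integer gives $\bigvee_n\widehat{g}_n(-\infty,n)=\bigvee_{s\in\mathbb{R}}h(s,\infty)^{\ast}$, so hypothesis (2) says precisely that $\bigvee_s h(s,\infty)^{\ast}=\top$. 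The companion clause $\bigvee_r h(r,\infty)=\top$ follows from Lemma \ref{Lem:1}(3) for $\widehat{g}_1$, since $h$ and $\widehat{g}_1$ agree on rays $(r,\infty)$ with $r<1$ and $h(\cdot,\infty)$ is decreasing.

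Finally I would confirm that $\{g_n\}$ is the truncate sequence of this $h$, i.e.\ $\widehat{g}_n=h\wedge n$, by comparing values on right rays via Lemma \ref{Lem:3}(2). When $r<n$ we have $(h\wedge n)(r,\infty)=h(r,\infty)\wedge\top=h(r,\infty)=\widehat{g}_n(r,\infty)$ by construction. When $r\ge n$ both sides are $\bot$: the constant $n$ gives $n(r,\infty)=\bot$ on the left, while on the right hypothesis (1) forces $g_n=g_{n+1}\wedge n\le n$, hence $\widehat{g}_n\le n$ and $\widehat{g}_n(r,\infty)\le n(r,\infty)=\bot$. This establishes $\widehat{g}_n=h\wedge n$ for every $n$, completing the converse.
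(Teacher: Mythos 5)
Your proof is correct and follows the same overall strategy as the paper's: define $h$ ray-by-ray (well-defined thanks to hypothesis (1)), check the extension criterion of Lemma \ref{Lem:1}, and spend hypothesis (2) on the density clause. The difference is one of orientation. The paper defines $h$ on \emph{left} rays, $h(-\infty,r) \equiv \widehat{g}_n(-\infty,r)$ for $n > r$, and checks the up-down dual of Lemma \ref{Lem:1}; there hypothesis (2) is literally one half of the dual third condition, so it plugs in with no translation, while the pseudocomplement half is inherited from $\widehat{g}_1$. You instead define $h$ on \emph{right} rays and check Lemma \ref{Lem:1} itself, which forces you to convert hypothesis (2) --- a left-ray statement --- into the right-ray density clause $\bigvee_s h(s,\infty)^{\ast} = \top$ via Lemma \ref{Lem:3}(1); your computation $\bigvee_n \widehat{g}_n(-\infty,n) = \bigvee_n\bigvee_{s<n} h(s,\infty)^{\ast} = \bigvee_s h(s,\infty)^{\ast}$ does this correctly, and your companion clause $\bigvee_r h(r,\infty) = \top$ comes from $\widehat{g}_1$ just as the paper's pseudocomplement half does. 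So the two arguments are mirror images, yours costing one extra (correct) translation step. A point in your favor: you close the loop by verifying on right rays that the constructed $h$ actually satisfies $\widehat{g}_n = h \wedge n$, using $(f\wedge g)(r,\infty) = f(r,\infty)\wedge g(r,\infty)$ together with $g_n \leq n$ (which follows from hypothesis (1)); the paper's proof constructs $h$ but leaves this final identification implicit.
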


\begin{proof}
If $g_n = h \wedge n$ for all $n$ then (1) obviously holds, and 
\begin{align*}
\bigvee_n g_n(-\infty,n) 
&= \bigvee_n (h \wedge n)(-\infty, n) 
= \bigvee_n h(-\infty,n) 
= h\left( \bigvee_n (-\infty,n) \right) \\
&= h(-\infty, \infty) = \top.
\end{align*}
Suppose now that $\{ g_n \}$ is a sequence in $\mathcal{R}L^+$ satisfying
(1) and (2). Put $h(-\infty, r) \equiv g_n(-\infty,r)$ for any $n >r$.
This definition is independent of the choice of $n$ by (1). We must show
that $h$ satisfies the properties in the (up-down dual of) 
Lemma \ref{Lem:1}. It is clear that $h$ satisfies the first of these properties,
namely that $h(-\infty, s) \prec h(-\infty,r)$ whenever $r < s$, because
it reduces to the same property of $g_n$ for sufficiently large $n$, and 
$h$ satisfies the second property for similar reasons. Since 
$\bigvee_r h(-\infty,r) = \bigvee_n h(-\infty,n) = \bigvee_n g_n(-\infty,n)$, 
$h$ also satisfies half of the third property. But $h$ also satisfies the
other half because 
$\bigvee_r h(-\infty,r)^* = \bigvee_r g_1(-\infty,r)^*= \top$.
\end{proof}

\begin{definition}[Truncate sequence]
We shall refer to a sequence $\{g_n\} \subseteq G$ satisfying
Proposition \ref{Prop:12} as a \emph{truncate sequence}.
\end{definition}

\begin{corollary}
\label{Cor:8} 
Every truncate sequence in $G$ has a pointwise join in $\mathcal{R}L$.
\end{corollary}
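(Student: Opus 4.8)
The plan is to produce the pointwise join explicitly and then verify it using the two immediately preceding results, with essentially no new computation. First I would invoke the converse direction of Proposition \ref{Prop:12}: since $\{g_n\}$ is by definition a truncate sequence, it satisfies conditions (1) and (2) of that proposition, and so there is an element $h \in \mathcal{R}L$ of which the $g_n$ are the truncates, meaning $\widehat{g}_n = h \wedge n$ for every $n$. This $h$ is the candidate for the pointwise join, and producing it is the only substantive input needed.

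The key observation is that Proposition \ref{Prop:3} is a statement about an arbitrary $\mathbf{W}$-object, and $\mathcal{R}L$ is itself such an object. Applying that proposition to the element $h \in \mathcal{R}L$ gives $\bigvee^\bullet_{\mathbb{N}}(h \wedge n) = h$ computed internally in $\mathcal{R}L$. Substituting $h \wedge n = \widehat{g}_n$ then turns this into $\bigvee^\bullet_n \widehat{g}_n = h$, which is precisely the assertion that the truncate sequence $\{g_n\}$ admits a pointwise join in $\mathcal{R}L$, namely $h$.

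There is no genuine obstacle to overcome here; the entire content has been front-loaded into Propositions \ref{Prop:12} and \ref{Prop:3}, and the corollary is a one-line consequence of combining them. The only point requiring a moment's care is recognizing that the truncate identity of Proposition \ref{Prop:3} holds internally in $\mathcal{R}L$, and not merely for an abstract $G$ sitting inside $\mathcal{R}L$ via its Madden embedding, so that the pointwise supremum is being computed in the correct ambient object.
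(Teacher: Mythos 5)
Your proof is correct and is exactly the argument the paper intends: the corollary is stated without a separate proof precisely because it follows by combining the converse half of Proposition \ref{Prop:12} (producing $h \in \mathcal{R}L$ with $\widehat{g}_n = h \wedge n$) with Proposition \ref{Prop:3} applied to $h$. Your final point of care is also resolved correctly: $L$, being a Madden frame, is regular Lindel\"{o}f, so $\mathcal{R}L$ is (up to isomorphism) its own Madden representation, and in any case the ray computation in the proof of Proposition \ref{Prop:3} applies verbatim to $h \in \mathcal{R}L$, giving $h(r,\infty) = \bigvee_n (h \wedge n)(r,\infty)$ in $L$ for all $r$.
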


In Section 10 of \cite{Hager:2013}, the second author conducted an analysis
of a construct which is closely related to truncate sequences, but stronger.
His \enquote*{expanding sequences} have the first property of truncate
sequences but satisfy $\bigcap_n (u_{n + 1} - u_n)^{\bot\bot} = 0$ instead
of the second property. The possession of a supremum for every such sequence
turns out to be equivalent to the property of being *-maximum, or *-max for
short. A $\mathbf{W}$-object is *-max if it contains a copy of every other 
$\mathbf{W}$-object with the same bounded part. This interesting attribute is
not the same as requiring the truncate sequences to have joins, for it
implies, inter alia, that the classical Yosida space of $G$ be a quasi-$F$
space. As is evident from Corollary \ref{Cor:8}, no such restriction applies
to the $\mathbf{W}$-objects in which the truncate sequences have joins.

We confess ignorance of the many questions that arise naturally here,
postponing an investigation for the time being. But surely the first
question is unavoidable, as it is motivated by the characterization of the
divisible *-max $\mathbf{W}$-objects as being precisely those in which every
expanding sequence has a join (\cite[Section 10]{Hager:2013}).  
See Theorem \ref{Thm:9} below for further discussion of these topics.

\begin{question}
Which $\mathbf{W}$-objects $G$ have the feature that every truncate 
sequence in $G$ has a pointwise join in $G$?
\end{question}

\subsection{Pointwise closure and $\mathbf{W}$-kernels}

$\mathbf{W}$-kernels are characterized by the property of 
being closed under pointwise joins. A convex $\ell$-subgroup $K \leq G$ 
is said to by \emph{pointwise closed} if $K_0 \subseteq K^+$ and 
$\bigvee^\bullet K_0 = g$ imply $g \in K$.  

\begin{proposition}
A convex $\ell$-subgroup $K$ of a $\mathbf{W}$-object $G$ is a 
$\mathbf{W}$-kernel iff it is pointwise closed. 
\end{proposition}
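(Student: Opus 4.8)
The plan is to prove both implications, drawing on Lemma~\ref{Lem:4} (the algebraic characterization of $\mathbf{W}$-kernels among convex $\ell$-subgroups) for the substantive direction and on the preservation property of Proposition~\ref{Prop:7} for the easy one.

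\textbf{$\mathbf{W}$-kernel $\Rightarrow$ pointwise closed.} Here I would take $K$ to be the kernel of the quotient morphism $\theta : G \to G/K$ and suppose $K_0 \subseteq K^+$ with $\bigvee^\bullet K_0 = g$. By Proposition~\ref{Prop:7} the morphism $\theta$ preserves pointwise suprema, so $\theta(g) = \bigvee_{K_0}\theta(k)$; since each $k \in K_0$ lies in $K = \ker\theta$, every $\theta(k) = 0$, whence $\theta(g) = 0$ and $g \in K$. This direction is essentially immediate.

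\textbf{Pointwise closed $\Rightarrow$ $\mathbf{W}$-kernel.} Here I would verify the two conditions of Lemma~\ref{Lem:4}. For the unit condition, suppose $g \in G^+$ with $g \wedge 1 \in K$. Using the $\mathbb{R}$-identity $n(g\wedge 1) = ng \wedge n$ (valid in $G$ by Corollary~\ref{Cor:2}) together with the fact that $K$ is a convex subgroup, I obtain $g \wedge n \in K^+$ for every $n$; since $g = \bigvee^\bullet_{\mathbb{N}}(g \wedge n)$ by Proposition~\ref{Prop:3}, pointwise closure yields $g \in K$. The crux is the archimedean condition: given $f,g \in G^+$ with $(nf-g)^+ \in K$ for all $n$, one must produce $f$ as a pointwise join of members of $K$. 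I would set $K_0 \equiv \{\, f \wedge (nf-g)^+ : n \in \mathbb{N}\,\}$, which lies in $K^+$ by convexity since $0 \le f\wedge(nf-g)^+ \le (nf-g)^+ \in K$.

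The heart of the argument is then to check $f = \bigvee^\bullet K_0$ by computing on right rays in the Madden representation $\widehat{G} \le \mathcal{R}L$ (Subsection~\ref{Subsec:1}), using Corollary~\ref{Cor:1}. For $r \ge 0$, the identity $((nf-g)^+ - r)^+ = (nf-(g+r))^+$ and Corollary~\ref{Cor:1}(5), applied with $g$ replaced by $g+r \ge 0$, give
\[
\bigvee_n \widehat{(nf-g)^+}(r,\infty) = \bigvee_n \coz(nf-(g+r))^+ = \coz f = \widehat{f}(0,\infty),
\]
and since $\widehat{f\wedge(nf-g)^+}(r,\infty) = \widehat{f}(r,\infty)\wedge\widehat{(nf-g)^+}(r,\infty)$ by Corollary~\ref{Cor:1}(4), taking the join over $n$ yields $\widehat{f}(r,\infty)\wedge\widehat{f}(0,\infty) = \widehat{f}(r,\infty)$. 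For $r < 0$ both sides collapse to $\top$, because adding the positive constant $|r|$ to a positive element generates the whole group as a $\mathbf{W}$-kernel. Hence $f = \bigvee^\bullet K_0$, and pointwise closure delivers $f \in K$. I expect the main obstacle to be exactly this step: guessing the correct test family $K_0$ and reducing $\bigvee_n\coz(nf-(g+r))^+$ to $\coz f$ via Corollary~\ref{Cor:1}(5), while separately and carefully disposing of the rays $r<0$ where the relevant joins degenerate to $\top$.
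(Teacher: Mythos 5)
Your proof is correct, and its skeleton matches the paper's --- both directions ultimately rest on Lemma~\ref{Lem:4}, the unit condition is handled identically via truncates and Proposition~\ref{Prop:3}, and your test family $\{f\wedge(nf-g)^+\}$ is exactly the paper's $\{(na-b)^+\wedge a\}$ --- but the two key steps are executed by different means. For the forward direction the paper reads Proposition~\ref{Prop:1} at $r=0$: $\bigvee^\bullet K_0 = g$ gives $[K_0]=[g]$, so any $\mathbf{W}$-kernel containing $K_0$ contains $g$; your appeal to Proposition~\ref{Prop:7} applied to the quotient map $G\to G/K$ is an equally immediate equivalent. For the archimedean condition, the paper verifies $f=\bigvee^\bullet\{f\wedge(nf-g)^+\}$ by subtracting $f$ and grinding through the lifting formula of Theorem~\ref{Thm:2} from scratch, a lengthy ray-by-ray computation of $\bigvee_n\left(\left(((n-1)f-g)\vee(-f)\right)\wedge 0\right)(r,\infty)$; you instead convert $\widehat{(nf-g)^+}(r,\infty)$ into $\coz(nf-(g+r))^+$ via the scalar identity $((x)^+-r)^+=(x-r)^+$ (valid for $r\ge 0$ and transferred to $\mathcal{R}L$ by Corollary~\ref{Cor:2}), and then let Corollary~\ref{Cor:1}(5), applied with the parameter $g+r\geq 0$, do all the work, finishing with Corollary~\ref{Cor:1}(4) and frame distributivity. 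This is genuinely slicker: it localizes the archimedean content of the argument in the already-established Corollary~\ref{Cor:1}(5) rather than re-deriving it from the lifting formula. One small repair: for $r<0$ your parenthetical justification (that $[(k-r)^+]=[k+|r|]$ generates everything as a $\mathbf{W}$-kernel) tacitly treats the constant $|r|$ as if it were an element of $G$, which it need not be; the cleaner and fully safe observation is that every element of $K_0\cup\{f\}$ is positive, so by Lemma~\ref{Lem:3}(2) and the formula for constant functions each ray at a negative $r$ is already $\top$ on both sides.
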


\begin{proof}
Suppose $K$ is a $\mathbf{W}$-kernel with subset $K_0$ such that 
$\bigvee^\bullet K_0 = g$.  According to Proposition \ref{Prop:1} we 
are supposing that $[(k - r)^+ : k \in K_0] 
= [(g - r)^+]$ for all $r \in \mathbb{R}$.  
In particular, for $r = 0$ this says that $[K_0] = [g]$, i.e., 
any $\mathbf{W}$-kernel containing $K_0$ must also contain $g$.
But one such $\mathbf{W}$-kernel is $K$, hence $g \in K$ and $K$
is pointwise closed.

Now suppose that $K$ is a pointwise closed convex $\ell$-subgroup of $G$;
we must show that $K$ has properties (1) and (2) of Lemma \ref{Lem:4}.
To check (2), suppose that $g \wedge 1 \in K$ for some $g \in G$. Then
for each positive integer $n$ we would have $ng \wedge n \in K$ 
because $K$ is a group, hence $g \wedge n \in K$ because $K$ is convex,
with the result that $g \in K$ by Proposition \ref{Prop:3}.  

To check property
(1) consider $a,b \in G^+$ such that $(na - b)^+ \in K$ for all $n$. 
We claim that $\bigvee^\bullet_n ((na - b)^+ \wedge a) = a$.
What we will actually prove is that 
\[\bigvee\nolimits^\bullet_n (((n - 1)a - b) \vee (-a)) \wedge 0) = 0,
\]
the result of subtracting $a$ from the equation claimed.  Since 
$0(r,\infty) = \top$ for $r < 0$ and $\bot$ otherwise, this amounts to 
showing that $\bigvee_n((n - 1)a - b) \vee (-a)) (r,\infty) = \top$ 
for $r < 0$. According to Theorem \ref{Thm:2}, it is sufficient to 
demonstrate that $\bigvee_n\bigvee_{U,V}(a(U) \wedge b(V)) = \top$, 
where the inner join ranges over open
subsets $U,V \subseteq \mathbb{R}$ for which 
\[
((n - 1)U - V) \vee (-U) \subseteq (r,\infty).
\]
But if $U$ and $V$ are to satisfy this containment then $U$ must be 
bounded both above and below, say $U \subseteq (u,w)$, and $V$
must be bounded above, say $V \subseteq (-\infty,v)$, where 
$(n-1)u > r + v$ or $-w > r$.  In sum, we must show that, for $r < 0$, 
\begin{align*}
\top &=
\bigvee_n\left(\bigvee_{u < w < -r}\bigvee_v(a(u,w) \wedge b(-\infty,v)) \vee 
 \bigvee_v\bigvee_{w>u>\frac{r + v}{n - 1}}
     (a(u,w) \wedge b(-\infty,v))\right) \\
&= \bigvee_n\left(\bigvee_{u < w < -r}\left(a(u,w) 
     \wedge \bigvee_v b(-\infty,v)\right) \vee 
     \bigvee_v \left(\bigvee_{w>u>\frac{r + v}{n - 1}} a(u,w) 
     \wedge b(-\infty,v)\right)\right) \\
&= \bigvee_n\left(\bigvee_{u < w < -r}a(u,w)  \vee 
      \bigvee_v \left(a(\frac{r + v}{n - 1},\infty) 
     \wedge b(-\infty,v)\right)\right) \\
&= a(-\infty,-r) \vee\bigvee_v\bigvee_n\left(a(\frac{r + v}{n - 1},\infty) 
     \wedge b(-\infty,v)\right)  
\end{align*}
But for $v > -r$ we have 
$\bigvee_n\left(a(\frac{r + v}{n - 1},\infty) 
     \wedge b(-\infty,v)\right) = a(0,\infty) \wedge b(-\infty,v)$, so that 
the last join displayed reduces to 
$a(-\infty,-r) \vee a(0,\infty) = a(-\infty,\infty) = \top$, thereby proving the 
claim and the proposition.
\end{proof}

\section{Conditional pointwise completeness\label{Sec:5}}

The classical Nakano-Stone Theorem asserts that every bounded (countable)
subset of $\mathcal{C}X$ has a supremum in $\mathcal{C}X$ iff $X$ is
extremally disconnected (basically disconnected). In this section we prove
the corresponding result for pointwise suprema, Theorem \ref{Thm:3}. Our
analysis will be closely intertwined with the pointfree version of the
classical theorem, a result of Banaschewski and Hong \cite%
{BanaschewskiHong:2003}.

\subsection{The Banaschewski-Hong Theorem}

We begin with an observation.

\begin{proposition}\label{Prop:16}
A conditionally pointwise complete ($\sigma$-complete) $%
\mathbf{W}$-object is conditionally complete ($\sigma$-complete).
\end{proposition}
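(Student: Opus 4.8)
The plan is to reduce the statement to the single observation, recorded in Remarks \ref{Rem:1}(4), that a pointwise supremum is in particular an ordinary supremum; the only genuine work is to transport this fact from $\mathcal{R}L$ to the abstract object $G$ through its Madden representation. First I would pin down the terminology: to say that $G$ is conditionally pointwise complete ($\sigma$-complete) is to say that every bounded (countable) subset of $G$ admits a pointwise supremum in $G$, whereas conditional completeness ($\sigma$-completeness) asks only that every such subset admit an ordinary supremum in $G$. So let $F \subseteq G$ be an arbitrary bounded (and, in the $\sigma$-case, countable) subset. By hypothesis there is an element $f_0 \in G$ with $f_0 = \bigvee^\bullet F$, and it suffices to show that this same $f_0$ serves as the ordinary join $\bigvee F$ in $G$.

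To this end I would identify $G$ with its Madden representation $\widehat{G} \leq \mathcal{R}L$, where $L \equiv \mathcal{M}G$, using that $\mu_G$ is a $\mathbf{W}$-injection (Theorem \ref{Thm:6}(1)) and hence an order embedding. By the definition of pointwise supremum in $\mathbf{W}$ we then have $\widehat{f_0} = \bigvee^\bullet \widehat{F}$ in $\mathcal{R}L$, and Remarks \ref{Rem:1}(4) converts this into the ordinary identity $\widehat{f_0} = \bigvee \widehat{F}$ computed in $\mathcal{R}L$. It remains to pull the join back inside the subobject $\widehat{G}$: since $\widehat{f_0}$ already lies in $\widehat{G}$, and since every upper bound of $\widehat{F}$ lying in $\widehat{G}$ is \emph{a fortiori} an upper bound of $\widehat{F}$ in the ambient $\mathcal{R}L$ and therefore dominates the least such bound $\widehat{f_0}$, the element $\widehat{f_0}$ is the least upper bound of $\widehat{F}$ within $\widehat{G}$ as well. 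Transporting back along the isomorphism $G \cong \widehat{G}$ yields $f_0 = \bigvee F$ in $G$, as required; the countability hypothesis is preserved verbatim throughout, so the identical argument disposes of the $\sigma$-complete case.

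I do not anticipate any serious obstacle here. The mathematical content is carried entirely by Remarks \ref{Rem:1}(4), and the sole point deserving a word of justification is the routine observation that a supremum computed in $\mathcal{R}L$ which happens to land in the subobject $\widehat{G}$ is automatically the supremum computed in $\widehat{G}$. Consequently I expect the finished argument to occupy only a few lines.
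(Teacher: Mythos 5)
Your proof is correct and follows the same route as the paper, which simply cites Remarks \ref{Rem:1}(4); your argument is exactly the careful expansion of that one-line proof, making explicit the transfer through the Madden representation and the routine fact that a supremum computed in $\mathcal{R}L$ which lands in $\widehat{G}$ is also the supremum in $\widehat{G}$.
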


\begin{proof}
This follows from Remark \ref{Rem:1}(4).
\end{proof}

The converse of Proposition \ref{Prop:16} does not hold, even for 
$\mathbf{W}$ objects of the form $\mathcal{C} X $, $X$ a Tychonoff space. 
In this case $\mathcal{C} X $ is conditionally $\sigma$-complete iff 
$X$ is basically disconnected; this is the classical Nakano-Stone Theorem. 
On the other hand, if $X$ is compact and basically disconnected then
$G \equiv \mathcal{C}X$ is conditionally $\sigma$-complete. But  
$L \equiv \mathcal{M}G = \mathcal{O}X$ is a $P$-frame iff $X$ 
is a $P$ space, and a compact $P$-space is finite.
The point is that, by Theorem \ref{Thm:3}, $G$ is not conditionally 
pointwise $\sigma$-complete unless $X$ is finite. See also  
\cite[4N]{GillmanJerison:1960}.

Theorem \ref{Thm:1} is a modestly embellished version of the pointfree
Nakano-Stone Theorem, a beautiful result of Banaschewski and Hong \cite%
{BanaschewskiHong:2003}. We prove Theorem \ref{Thm:1} in some detail, not
just because we need the result but also because the proofs provide the
basis for the corresponding result for pointwise completeness in Subsection 
\ref{Subsec:4}.

Let us review the basic definitions: a frame is said to be extremally
disconnected (basically disconnected) provided that $a^{\ast}\vee
a^{\ast\ast}=\top$ for all $a\in L$ ($a\in\coz L$). And 
$\mathbb{R}^{+} \left( G\right) $ stands for the set of positive real 
numbers such that the corresponding constant function lies in $G$.

\begin{theorem}
\label{Thm:1}Let $G$ be a $\mathbf{W}$-object with Madden frame $L$. Then
conditions (1) and (2) together are equivalent to conditions (3) and (4).

\begin{enumerate}
\item 
$\bigwedge \mathbb{R}^{+} \left( G\right) = 0$, i.e., $G$ contains 
arbitrarily small positive multiples of $1$.

\item 
$G^{\ast }$ is conditionally complete ($\sigma $-complete).

\item 
$L$ is extremally disconnected (basically disconnected).

\item 
The Madden representation carries $G^*$ onto $\mathcal{R}^*L$.
\end{enumerate}
\end{theorem}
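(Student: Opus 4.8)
The plan is to establish the two implications $(3)\wedge(4)\Rightarrow(1)\wedge(2)$ and $(1)\wedge(2)\Rightarrow(3)\wedge(4)$, taking as the sole external input the underlying theorem of Banaschewski and Hong \cite{BanaschewskiHong:2003}: a frame $L$ is extremally disconnected (basically disconnected) if and only if $\mathcal{R}^*L$ is conditionally complete ($\sigma$-complete). Accordingly I read (3) throughout as the assertion that $\mathcal{R}^*L$ is conditionally complete. The reverse implication is then quick: assuming (3) and (4), the Madden injection of Theorem \ref{Thm:6}(1) restricts on $G^*$ to a $\mathbf{W}$-isomorphism $G^*\to\mathcal{R}^*L$ (injective in general, surjective by (4)), across which the conditional completeness of $\mathcal{R}^*L$ supplied by (3) transfers to give (2); and since every constant $r\cdot 1$ with $r>0$ lies in $\mathcal{R}^*L$ and hence, by (4), in $\widehat{G^*}$, we get $r\in\mathbb{R}^+(G)$ for all $r>0$, i.e. $\bigwedge\mathbb{R}^+(G)=0$, which is (1).

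For the forward implication I would first deduce (3) and then bootstrap to (4). Identify $G$ with $\widehat G\le\mathcal{R}L$, fix $a\in L$, and put $U=\{g\in G^*:0\le g\le1,\ \coz g\le a\}$ and $V=\{g\in G^*:0\le g\le1,\ \coz g\le a^*\}$; by (2) the joins $u=\bigvee U$ and $v=\bigvee V$ exist in $G^*$. If $g\in U$ and $h\in V$ then $\coz g\wedge\coz h\le a\wedge a^*=\bot$ forces $g\wedge h=0$, so the infinite distributive law for existing joins gives $u\wedge v=0$ and hence $\coz u\wedge\coz v=\bot$. Cozero density of $\mu_G$ gives $\bigvee_U\coz g=a$ and $\bigvee_V\coz g=a^*$, so $a\le\coz u$ and $a^*\le\coz v$; together with $\coz u\wedge\coz v=\bot$ this yields $\coz v=a^*$ and $a\le\coz u\le a^{**}$. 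Finally, $U$ is closed under $g\mapsto ng\wedge1$, so for $0\le r<1$ one computes $u(r,\infty)\ge\bigvee_{g\in U}\bigvee_n g(r/n,\infty)=\bigvee_U\coz g=a$; thus $u=1$ on $a$, i.e. $\coz(1-u)\le a^*$, whence $1-u\in V$ and $1=u+(1-u)\le u\vee v\le1$. Therefore $\top=\coz(u\vee v)=\coz u\vee\coz v\le a^{**}\vee a^*$, giving $a^*\vee a^{**}=\top$; as $a$ was arbitrary, $L$ is extremally disconnected. This is (3).

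To pass from (3) to (4) I must show $\widehat{G^*}=\mathcal{R}^*L$. First, every characteristic function is realized: if $a$ is complemented then, $L$ being regular Lindel\"of and $\mu_G$ cozero dense, $a=\coz g_a$ for some $0\le g_a\le1$ in $G^*$; the join $e_a=\bigvee_n(ng_a\wedge1)$ exists in $G^*$ by (2), and meeting it against the analogous $e_{a^*}$ (via infinite distributivity) shows $\coz e_a\le a^{**}=a$, while $e_a(r,\infty)\ge a$ for $0\le r<1$, so $\widehat{e_a}=\chi_a$. For a general $0\le\phi\le1$ in $\mathcal{R}^*L$ (the bounded case reduces here by an integer translation and rescaling), I use (1) to choose constants $s\in\mathbb{R}^+(G)$ densely in $(0,1)$ and, for each $h\in G^*$ with $\coz h\le\phi(s,\infty)$, form $g=s\big((mh)\wedge1\big)\in G^*$; these lie below $\phi$ and their ray-values recover $\phi(r,\infty)=\bigvee_{s>r}\phi(s,\infty)$, exhibiting $\phi$ as a pointwise supremum in $\mathcal{R}^*L$ of a family $A\subseteq G^*$. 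Because $L$ is extremally disconnected this supremum is assembled from characteristic functions of complemented elements, so the join of $A$ formed inside $G^*$ (which exists by (2)) is computed level by level and equals $\phi$; hence $\phi\in\widehat{G^*}$, proving (4).

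The single genuine obstacle is the last point: ruling out overshoot when one replaces the supremum of $A$ taken in the ambient $\mathcal{R}^*L$ (where it is $\phi$) by the supremum taken inside $G^*$. The conceptual reason there is no gap is that $\widehat{G^*}\le\mathcal{R}^*L$ is a cozero-dense, hence essential, extension of the conditionally complete---and therefore essentially closed---object $\widehat{G^*}$, so the two coincide; the characteristic-function computation above is the concrete form of this fact. The indispensable uses of (1) are equally delicate: without arbitrarily small constants the values $g(r/n,\infty)$ and the interpolating levels $s\in(r,1)$ are not available densely enough to recover $\phi(r,\infty)$, and indeed (4) fails. The basically disconnected, $\sigma$-complete case goes through mutatis mutandis, with arbitrary joins replaced by countable ones, $L$ replaced by $\coz L$, and extremal disconnectedness by basic disconnectedness; the appeal to pointwise-implies-ordinary suprema is Remark \ref{Rem:1}(4).
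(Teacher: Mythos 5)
Your forward direction $(1)\wedge(2)\Rightarrow(3)\wedge(4)$ is where the problems lie, and they are concentrated in the passage from (3) to (4). A lesser gap first: cozero density of $\mu_G$ does \emph{not} give, for a complemented $a$, a single $g_a\in G^*$ with $\coz\widehat{g_a}=a$; it only exhibits $a$ as a join of cozeros $\coz\widehat{g}$ (a countable join, by the Lindel\"{o}f property, since a complemented element is a cozero element). Manufacturing one element of $G^*$ whose cozero is exactly $a$ needs a further argument --- either a rerun of your own $U$-construction from the proof of (3), or the uniform completeness of $G^*$ (which does follow from conditional $\sigma$-completeness, but is nowhere invoked) --- so as written this step is unsupported.

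The serious gap is your resolution of the overshoot obstacle, which you correctly identify as the crux. It rests on the principle that a conditionally complete $\mathbf{W}$-object is essentially closed, and that principle is false: $\mathcal{C}(\beta\mathbb{N})=\ell^{\infty}$ is conditionally complete, yet $\ell^{\infty}\leq\mathbb{R}^{\mathbb{N}}$ is a proper essential extension (it is order dense and both objects are divisible, so Lemma \ref{Lem:9}(5) applies). In the $\sigma$-version the principle is hopeless even among bounded objects: it would force every basically disconnected compact space to be extremally disconnected, since $\mathcal{C}X$ is then conditionally $\sigma$-complete while its Gleason cover yields a proper essential bounded extension. What actually kills the overshoot --- and is what the paper uses --- is the elementary Lemma \ref{Lem:7}: suprema computed in an order-dense $\ell$-subgroup remain suprema in the extension. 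Since $\widehat{G^*}$ is order dense in $\mathcal{R}^*L$ (cozero density plus (1), truncating by constants in $\mathbb{R}^+(G)$), the join of your family $A$ formed inside $G^*$ must coincide with its join $\phi$ formed in $\mathcal{R}^*L$; no essential closedness is needed, and indeed the paper's Proposition \ref{Prop:2} obtains (4) this way directly from (1) and (2), without ever using (3). Your \enquote*{mutatis mutandis} claim for the $\sigma$-case also fails inside your proof of (3): $a^*$ is not a cozero element, hence not Lindel\"{o}f, so $V$ cannot be cut down to a countable family with cozero join $a^*$, and $\bigvee V$ need not exist under mere conditional $\sigma$-completeness; one must eliminate $v$ altogether (e.g., deduce $\coz u\leq a^{**}$ by distributing $u=\bigvee U$ over meets against elements with cozero below $a^*$, then use $\top=\coz 1=\coz u\vee\coz(1-u)$). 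By contrast, your direct derivation of (3) from (1) and (2) in the complete case is sound and is a genuine structural variant: the paper proves (4) first (Proposition \ref{Prop:2}, via Lemmas \ref{Lem:5} and \ref{Lem:7}) and then uses $G^*=\mathcal{R}^*L$ in proving (3) (Proposition \ref{Prop:4}), whereas your $U,V$ argument gets (3) without (4); had you then followed the paper's route to (4), the whole proof would stand.
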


\begin{proof}
Since it is most relevant to our purposes, we prove the version of this
theorem having to do with the conditional $\sigma $-completeness of $G^{\ast
}$ versus the basic disconnectivity of $L$. The implication from (3) and (4)
to (1) and (2) is provided by the result of Banaschewski and Hong 
(\cite[Prop. 2]{BanaschewskiHong:2003}), since they prove that if 
$L $ is basically disconnected then $\mathcal{R}L$, and hence 
$\mathcal{R}^{\ast }L$, is conditionally $\sigma $-complete. 
The opposite implication is
provided by Propositions \ref{Prop:4} and \ref{Prop:2}. The running
assumptions throughout are that $L$ is the Madden frame of $G$, and that $G$
has been identified with its Madden representation in $\mathcal{R}L$, i.e., 
$G$ is a $\mathbf{W}$ subobject of $\mathcal{R}L$.
\end{proof}

\begin{proposition}\label{Prop:4}
If $\bigwedge \mathbb{R}^+(G) = 0$ and $G^{\ast }$ is conditionally 
$\sigma $-complete then $L$ is basically disconnected.
\end{proposition}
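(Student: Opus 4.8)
The plan is to fix a cozero element $a \in \coz L$ and manufacture, out of the conditional $\sigma$-completeness of $G^*$, a single bounded element $h$ whose right-ray values detect the complementation of $a^{**}$. I identify $G$ with its Madden representation inside $\mathcal{R}L$ (Theorem~\ref{Thm:6}), so that $\mu_G$ is cozero dense. Because $L$ is regular Lindel\"of, cozero density lets me write $a = \bigvee_k \coz g^{(k)}$ for a countable family $g^{(k)} \in G^+$, and after replacing each by $g^{(k)} \wedge 1$ I may assume $0 \le g^{(k)} \le 1$ without changing its cozero. Setting $p_k \equiv \sum_{j \le k} 2^{-j} g^{(j)} \in G^+$ produces an increasing sequence with $\bigvee_k \coz p_k = a$. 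I then form the countable bounded family $\{\, n p_k \wedge 1 : n,k \in \mathbb{N}\,\} \subseteq G^*$ and let $h \equiv \bigvee_{n,k} (n p_k \wedge 1)$ be its supremum in $G^*$, which exists by hypothesis.

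Two facts about $h$ drive the proof. First, for $0 < s < 1$ one has $(n p_k \wedge 1)(s,\infty) = p_k(s/n,\infty)$ by Corollary~\ref{Cor:1}(4) and Proposition~\ref{Prop:8}(4), and $\bigvee_n p_k(s/n,\infty) = p_k(0,\infty) = \coz p_k$ by Lemma~\ref{Lem:1}(2); joining over $k$ gives $h(s,\infty) \ge a$ for every $0 < s < 1$. Second, and this is the heart of the matter, I claim $\coz h \le a^{**}$.

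To prove the claim I exploit that $h$ is the \emph{least} upper bound. Let $g' \in G^+$ satisfy $\coz g' \le a^*$. Then $\coz g' \wedge \coz p_k \le a^* \wedge a = \bot$, so $g'$ is disjoint from each $p_k$, hence from each $n p_k \wedge 1$. Consequently $h \wedge g'$ is disjoint from every member of the family, so $n p_k \wedge 1$ and $h \wedge g'$ are disjoint positive elements lying below $h$; since disjoint positives join additively, $(n p_k \wedge 1) + (h \wedge g') = (n p_k \wedge 1) \vee (h \wedge g') \le h$, which says that $h - (h \wedge g')$ is again an upper bound of the family, lies in $G^*$, and sits below $h$. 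Leastness forces $h \wedge g' = 0$, whence $\coz h \wedge \coz g' = \bot$. Because $\mu_G$ is cozero dense we have $a^* = \bigvee \{\coz g' : g' \in G^+,\ \coz g' \le a^*\}$, and distributing the meet over this join yields $\coz h \wedge a^* = \bot$, i.e. $\coz h \le a^{**}$, proving the claim. The hypothesis $\bigwedge \mathbb{R}^+(G) = 0$ enters only to guarantee that the approximating families, and the constants used to normalize them, may be taken inside the bounded part $G^*$.

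Finally I read off the complementation. Fix $0 < t < s < 1$. Monotonicity of the frame map $h$ gives $h(t,\infty) \le h(0,\infty) = \coz h \le a^{**}$, while the first fact gives $h(s,\infty) \ge a$, hence $h(s,\infty)^* \le a^*$. Since $h \in \mathcal{R}L$, Lemma~\ref{Lem:1}(1) applies with $t < s$ to give $h(s,\infty) \prec h(t,\infty)$, which by definition of $\prec$ means $h(s,\infty)^* \vee h(t,\infty) = \top$. Therefore $\top = h(s,\infty)^* \vee h(t,\infty) \le a^* \vee a^{**}$, so $a^* \vee a^{**} = \top$; as $a \in \coz L$ was arbitrary, $L$ is basically disconnected. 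I expect the genuine obstacle to be the middle step $\coz h \le a^{**}$: everything hinges on converting the abstract least-upper-bound property into the concrete disjointness identity $h \wedge g' = 0$, and on verifying that the competing upper bound $h - (h \wedge g')$ really does lie in $G^*$ and dominate the whole family.
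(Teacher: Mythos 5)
Your proof is correct, but it takes a genuinely different route from the paper's, and the difference is instructive. The paper first reduces to the case $a = \coz f$ for a single $f \in G^{*}$ (a reduction that rests on the hypothesis $\bigwedge\mathbb{R}^{+}(G)=0$ through Proposition \ref{Prop:2}, i.e., through the identification $\widehat{G}^{*} = \mathcal{R}^{*}L$), forms the one-parameter sequence $g_{n} \equiv nf\wedge 1$ with supremum $g$, and proves that $g$ is a component of $1$: the key step $(1-g)\wedge f = 0$ is an archimedean argument (choose the least $k$ with $kx\nleq 1$ for $x\equiv(1-g)\wedge f\wedge 1$), after which $\coz g \vee \coz(1-g) = \top$ yields $a^{**}\vee a^{*} = \top$. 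You avoid that reduction entirely by writing $a$ as a countable join of cozeros coming from $G$ --- cozero density of $\mu_{G}$ plus the Lindel\"{o}f property of cozero elements, the same device the paper uses later in Lemma \ref{Lem:5} --- and your key step is of a different nature: you convert leastness of the supremum $h$ into disjointness, observing that for $\coz g' \le a^{*}$ the element $h - (h\wedge g')$ is a competing upper bound lying below $h$, so that $h\wedge g' = 0$, and hence $\coz h \le a^{**}$ by cozero density; your finish via the relation $h(s,\infty)\prec h(t,\infty)$ of Lemma \ref{Lem:1}(1) replaces the paper's component computation. What your route buys is notable: it never uses $\bigwedge\mathbb{R}^{+}(G)=0$ at all (your closing sentence mis-locates its role --- the normalizations need only the unit $1$), so your argument in fact shows that conditional $\sigma$-completeness of $G^{*}$ by itself forces $L$ to be basically disconnected; the paper needs hypothesis (1) only because its reduction of $a$ to a cozero of an element of $G^{*}$ goes through Proposition \ref{Prop:2}.

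One slip should be repaired: the definition $p_{k} \equiv \sum_{j\le k}2^{-j}g^{(j)}$ presupposes that $G$ admits division by $2^{j}$, which a $\mathbf{W}$-object need not ($\bigwedge\mathbb{R}^{+}(G)=0$ concerns constant multiples of the unit, not divisibility of arbitrary elements). The weights do no work anyway, since you immediately pass to $np_{k}\wedge 1$: set $p_{k} \equiv \bigvee_{j\le k}g^{(j)}$ (or the unweighted sum) and every later step, including $\bigvee_{k}\coz p_{k} = a$ and the disjointness computation, goes through unchanged.
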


\begin{proof}
Consider a cozero element $a\in L$, say $a=f\left( 0,\infty\right) $ for $%
f\in C^{+} L$. By replacing $f$ by $f\wedge1$, we may assume that $f\in 
\mathcal{R}^{\ast}L=G^{\ast}$. Define the sequence $\left\{ g_{n}\right\} $
in $G^{\ast}$ by setting 
\begin{equation*}
g_{n}\equiv nf\wedge1,\;n\in\mathbb{N},
\end{equation*}
and let $g\in G^{\ast}$ be such that $g=\bigvee_{\mathbb{N}}g_{n}$.

We aim to show that $g$ is a component of $1$, i.e., that $\left( 1-g\right)
\wedge g=0$, by means of several claims. We first claim that $\left(
1-g\right) \wedge\left( nf-1\right) ^{+} =0$ for all $n$. For%
\begin{equation*}
g={\bigvee\nolimits}_{\mathbb{N}}g_{n}\Longrightarrow1-g = {%
\bigwedge\nolimits}_{\mathbb{N}}\left(1-g_{n}\right) = {\bigwedge\nolimits}_{%
\mathbb{N}}\left( 1-nf\right) ^{+} ,
\end{equation*}
and, since $\left( 1-nf\right) ^{+} \wedge\left( nf-1\right) ^{+} =0$, 
\begin{equation*}
\left( 1-g\right) \wedge\left( nf-1\right) ^{+} \leq\left( 1-nf\right) ^{+}
\wedge\left( nf-1\right) ^{+} =0.
\end{equation*}
We next claim that $\left( 1-g\right) \wedge f=0$. For if not, then $%
x\equiv\left( 1-g\right) \wedge f\wedge1>0$. Since $G$ is archimedean, there
exists $k\in\mathbb{N}$ such that $kx\nleq1$; let $k$ be the least such
integer. Then 
\begin{equation*}
0<\left( kx-1\right) ^{+} \leq\left( kf-1\right) ^{+} \Longrightarrow\left(
kx-1\right) ^{+} \wedge\left( 1-g\right) =0.
\end{equation*}
But%
\begin{equation*}
\left( k-1\right) x\leq1\Longrightarrow\left( kx-1\right) ^{+} \leq
x\leq\left( 1-g\right) ,
\end{equation*}
a contradiction. It now follows that $\left( 1-g\right) \wedge nf=0$ for all 
$n$, hence $\left( 1-g\right) \wedge g_{n}=0$ for all $n$. Upon recalling a
basic fact about $\ell$-groups, namely that if $g=\bigvee_{\mathbb{N}}g_{n}$
for $\left\{ g_{n}\right\} \subseteq G^{+} $ and if $a\wedge g_{n}=0$ for
all $n$ then $a\wedge g=0$, we reach the desired conclusion: $g\wedge\left(
1-g\right) =0$.

The basic disconnectivity of $L$ follows immediately from the fact that $g$
is a component of $1$, for 
\begin{equation*}
g\wedge\left( 1-g\right) =0\Longrightarrow g\vee\left( 1-g\right) =g+\left(
1-g\right) =1,
\end{equation*}
hence $\coz g\vee\coz \left( 1-g\right) =\coz 1=\top$, which is to say that $%
a^{\ast\ast}\vee a^{\ast }=\top$.
\end{proof}

The proof of Theorem \ref{Thm:1} is completed by Proposition \ref{Prop:2}, 
which requires two simple lemmas, the first of which is folklore. 
We say that an $\ell$-subgroup 
$H \leq G$ is \emph{order dense} in $G$ if for every $0 < g \in G$ there is some
$h \in H$ such that $0 < h \leq g$

\begin{lemma}\label{Lem:7}
Suppose $G$ is an order dense $\ell$-subgroup of $H$.
\begin{enumerate}
\item
Then suprema and infima in $G$ and $H$ agree, and

\item
$h = \bigvee \downset{h}_G$ for all $h \in H^+$.
\end{enumerate}
\end{lemma}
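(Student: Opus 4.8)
My plan is to treat the two parts separately, since part (2) is the real crux while part (1) is mostly formal. For part (1), one implication uses no density at all: if $a\in G$ and $a=\bigvee_H S$, then $a$ is an upper bound of $S$ in $G$, and any $G$-upper bound of $S$ is \emph{a fortiori} an $H$-upper bound, hence lies above $a$; so $a=\bigvee_G S$. The content is the converse. Suppose $a=\bigvee_G S$ and let $c\in H$ be any upper bound of $S$; I would set $d\equiv(a-c)^{+}=a-(a\wedge c)\geq\bot$ and try to force $d=0$, i.e. $a\leq c$. If $d>0$, order density furnishes $g\in G$ with $0<g\leq d$, and the key observation is that $g$ is then dominated by $a-s$ for every $s\in S$: since $c$ is an upper bound, $a-c\leq a-s$, whence $g\leq d=(a-c)^{+}\leq(a-s)^{+}=a-s$ by monotonicity of $(\cdot)^{+}$ together with $a-s\geq 0$. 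Thus $a-g$ is an upper bound of $S$ lying in $G$ and strictly below $a$, contradicting $a=\bigvee_G S$. Infima then follow by applying the supremum case to $-S$, using $-G=G$ and $-H=H$.

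For part (2), fix $h\in H^{+}$ and write $A\equiv\downset{h}_G$. Replacing each $g\in A$ by $g\vee 0$ (which again lies in $A$ since $h\geq 0$ and is no smaller than $g$) lets me assume from the start that I am computing the supremum of $A^{+}\equiv\{g\in G:0\leq g\leq h\}$, of which $h$ is visibly an upper bound. Given any upper bound $u\in H$ of $A^{+}$, I note $u\geq 0$ because $0\in A^{+}$, and that $v\equiv u\wedge h$ is again an upper bound of $A^{+}$ with $0\leq v\leq h$. Since $h\leq u$ is equivalent to $v=h$, it suffices to rule out $h-v>0$.

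This is where I expect the main obstacle, and where the obvious attempt fails: applying order density to the gap $h-v$ produces some $0<t\leq h-v$ in $G$, but such a $t$ is automatically $\leq h$, hence lies in $A^{+}$ and is therefore trapped below the upper bound $v$, so a single use of density yields no contradiction. The escape is a bootstrap. I would show by induction that $nt\leq v$ for all $n$: the base case is exactly the observation just made, and if $nt\leq v$ then $t\leq h-v\leq h-nt$, so $(n+1)t\leq h$, placing $(n+1)t$ in $A^{+}$ and hence below $v$. Having $nt\leq v$ for every $n$ with $t>0$ contradicts the \emph{Archimedean} property of $H$. This is the one step that genuinely requires archimedeanness (the statement is false for lexicographic extensions, so this hypothesis, automatic in the $\mathbf{W}$ setting, is essential). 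Therefore $h-v=0$, i.e. $h=v\leq u$; since $u$ was an arbitrary upper bound, $h=\bigvee A^{+}=\bigvee\downset{h}_G$, and by part (1) this supremum is computed identically in $G$ and $H$.
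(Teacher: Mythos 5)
Your proposal is correct and follows essentially the same route as the paper's proof: in both parts you apply order density to the gap between the purported supremum and a smaller upper bound, then in part (1) subtract the small element to produce a smaller upper bound inside $G$, and in part (2) iterate addition of it (your induction $nt\leq v$ is the paper's $A+g_{0}\subseteq A$, hence $ng_{0}\in A$) to violate the archimedean property of $H$. The only differences are cosmetic: you handle arbitrary, possibly incomparable upper bounds via $(a-c)^{+}$ and $u\wedge h$, where the paper tacitly reduces to an upper bound below the supremum, and you also record the trivial converse direction of (1), which the paper omits.
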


\begin{proof}
(1) Suppose $\bigvee A = g_0$ for some $A \subseteq G^+$ and 
$g_0 \in G^+$,
but that $A \leq h < g_0$ for some $h \in H$. Find $g_1 \in G$ such that
$0 < g_1 \leq g_0 - h$. Since $g_0 - g_1 < g_0$ there is some $g	\in A$
for which $g \nleq g_0 - g_1$.  But this flies in the face of the fact that 
$g + g_1 \leq h + g_1 \leq g_0$. We conclude that $\bigvee A = g_0$ in $H$.

(2) Given $h_0 \in H^+$, let $A \equiv \downset{h_0}_{G^+}$ 
and suppose for
the sake of argument that $A \leq h_1 < h_0$ for some $h_1 \in H^+$. 
Then find $g_0 \in G$ such 
that $0 < g_0 \leq h_0 - h_1$.  But for any $g \in A$ we have 
$g + g_0 \leq h_1 + g_0 \leq h_0$, i.e., $A + g_0 \subseteq A$.
It follows that $ng_0 \in A$ for all $n$, which is to say that 
$ng_0 \leq h_0$ for all $n$, a violation of the archimedean property
of $H$.
\end{proof}

\begin{lemma}
For $h,k \in \mathcal{R}^+L$ and $0 \leq q \in \mathbb{Q}$, 
if $\coz k \leq h(q, \infty)$ and 
$k \leq q$ then $k \leq h$.
\end{lemma}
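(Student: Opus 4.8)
The plan is to reduce the claim $k \leq h$ to a family of inequalities on the right rays, using Lemma \ref{Lem:3}(2): it suffices to show that $k(r,\infty) \leq h(r,\infty)$ for every $r \in \mathbb{R}$. I would then verify this ray-by-ray, partitioning $\mathbb{R}$ into the three ranges $r < 0$, $0 \leq r < q$, and $r \geq q$, which are cut out by the two reference points $0$ and $q$ that figure in the hypotheses.

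Two of the three ranges are immediate. For $r < 0$, since $h, k \geq 0$ and the constant function $0$ satisfies $0(r,\infty) = \top$ precisely when $r < 0$, Lemma \ref{Lem:3}(2) forces $h(r,\infty) = k(r,\infty) = \top$, so the inequality holds trivially. For $r \geq q$ I would invoke the hypothesis $k \leq q$, where $q$ denotes the constant function; by Lemma \ref{Lem:3}(2) this gives $k(r,\infty) \leq q(r,\infty)$, and since $q \notin (r,\infty)$ when $r \geq q$, the displayed formula for constant functions yields $q(r,\infty) = \bot$, whence $k(r,\infty) = \bot \leq h(r,\infty)$.

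The only substantive range is $0 \leq r < q$, and here the point is that the estimate telescopes through monotonicity of the frame map. Because $0 \leq r$ we have $(r,\infty) \subseteq (0,\infty)$, so $k(r,\infty) \leq k(0,\infty) = \coz k$ by Corollary \ref{Cor:1}(2); the first hypothesis supplies $\coz k \leq h(q,\infty)$; and since $r < q$ we have $(q,\infty) \subseteq (r,\infty)$, so $h(q,\infty) \leq h(r,\infty)$. Chaining these three inequalities gives $k(r,\infty) \leq h(r,\infty)$, and Lemma \ref{Lem:3}(2) then delivers $k \leq h$. I do not expect any real obstacle: the lemma is essentially bookkeeping in which each hypothesis is used exactly once, in the middle range, with the remaining two ranges following from the order-preservation of frame maps and the explicit form of $q$. (The degenerate case $q = 0$ requires no separate treatment, as the middle range is then empty; alternatively, $k \leq 0$ together with $k \geq 0$ forces $k = 0 \leq h$.)
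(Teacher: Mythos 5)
Your proof is correct and follows essentially the same route as the paper's: both reduce $k \leq h$ to the ray-wise inequalities $k(r,\infty) \leq h(r,\infty)$ via Lemma \ref{Lem:3}(2), dispose of $r \geq q$ using $k \leq q$ (the paper via $k = k \wedge q$ and Corollary \ref{Cor:1}(4), you via the constant-function formula — an immaterial difference), and handle $0 \leq r < q$ with the identical chain $k(r,\infty) \leq k(0,\infty) = \coz k \leq h(q,\infty) \leq h(r,\infty)$. Your explicit treatment of $r < 0$ and of the degenerate case $q = 0$ is a small tidiness bonus over the paper's write-up, but not a different argument.
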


\begin{proof}
By Lemma \ref{Lem:3}(2) it is sufficient to show that 
for any $r \in \mathbb{R}$, 
\[
h(r,\infty) \geq k(r,\infty)
 = (k \wedge q)(r,\infty) 
= k(r,\infty) \wedge q(r,\infty)
=\begin{cases} k(r,\infty) & \text{if $r < q$}\\
                        \bot &\text{if $r \geq q$}\end{cases}. 
\]
But this is clear, for if $0 \leq r < q$ then 
$k(r,\infty) \leq k(0,\infty) 
= \coz k \leq h(q,\infty) \leq h(r,\infty)$.\qedhere
\end{proof}

%

We remind the reader that a cozero element $a$ of a Lindel\"{o}f frame $L$
is Lindel\"{o}f, i.e., $a = \bigvee A$ implies $a = \bigvee A_0$ for some
countable subset $A_0 \subseteq A$.
 
\begin{lemma}\label{Lem:5}
If $\bigwedge \mathbb{R}^+(G) = 0$ then every element of $\mathcal{R}L$ 
is the join of a countable subset of $\widehat{G}^*$. 
\end{lemma}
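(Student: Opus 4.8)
The plan is to reduce to the case of a bounded nonnegative element and then manufacture the approximating family by hand out of cozero data. We treat the essential case $0 \le h$. By Proposition~\ref{Prop:3} we have $h = \bigvee^\bullet_n (h \wedge n)$, and since a countable union of countable sets is again countable it suffices to realize each truncate $h \wedge n$ (which for $0 \le h$ satisfies $0 \le h \wedge n \le n$, so lies in $\mathcal{R}^*L$) as a countable join of members of $\widehat{G}^*$. Thus we may assume $0 \le h \le N$ for some integer $N$. The one place the hypothesis $\bigwedge \mathbb{R}^+(G) = 0$ enters is the following: the constants lying in $G$ form a subgroup of $\mathbb{R}$ whose positive part has infimum $0$, hence a dense subgroup of $\mathbb{R}$, so I may fix once and for all a countable set $D_0 \subseteq \mathbb{R}^+(G)$ that is dense in $(0,\infty)$; for each such $p$ the constant $p\cdot 1$ is then a genuine element of $G$.

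For the main construction, note that for each rational $q > 0$ the element $h(q,\infty) = \coz(h - q)^+$ is a cozero element of the Lindel\"of frame $L$, hence is itself Lindel\"of. Since $\mu_G$ is cozero dense (Theorem~\ref{Thm:6}), $h(q,\infty) = \bigvee\{\coz \widehat{g} : g \in G^+,\ \coz \widehat{g} \le h(q,\infty)\}$, and Lindel\"ofness lets me extract a countable subfamily $\{g_{q,i}\}_{i}$ with $\coz \widehat{g}_{q,i} \le h(q,\infty)$ and $\bigvee_i \coz \widehat{g}_{q,i} = h(q,\infty)$. For each rational $q>0$, each index $i$, each $p \in D_0$ with $p < q$, and each $n \in \mathbb{N}$, I form $a = n\widehat{g}_{q,i} \wedge p$, which lies in $\widehat{G}^*$ because $ng_{q,i} \wedge p \in G$ and $0 \le a \le p$. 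The collection $A_0$ of all such $a$ is countable. Each $a$ satisfies $\coz a = \coz \widehat{g}_{q,i} \le h(q,\infty)$ and $a \le p < q$ with $q \in \mathbb{Q}$, so the preceding lemma yields $a \le h$; hence $h$ is an upper bound for $A_0$.

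It remains to show $h = \bigvee^\bullet A_0$, for then $h = \bigvee A_0$ by Remark~\ref{Rem:1}(4). The inequality $\bigvee_{A_0} a(r,\infty) \le h(r,\infty)$ holds since each $a \le h$. For the reverse, use $h(r,\infty) = \bigvee_{q > r,\, q \in \mathbb{Q}} h(q,\infty) = \bigvee_{q>r} \bigvee_i \coz \widehat{g}_{q,i}$, the first equality by Lemma~\ref{Lem:1}(2): for $r < 0$ every ray is $\top$ and there is nothing to prove, while for $0 \le r < q$ I pick $p \in D_0 \cap (r,q)$ and compute, using $(n\widehat{g})(r,\infty) = \widehat{g}(r/n,\infty)$ (as in the proof of Proposition~\ref{Prop:8}(4)) and Lemma~\ref{Lem:1}(2), that $\bigvee_n (n\widehat{g}_{q,i} \wedge p)(r,\infty) = \bigvee_n \widehat{g}_{q,i}(r/n,\infty) = \widehat{g}_{q,i}(0,\infty) = \coz \widehat{g}_{q,i}$. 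Summing over $q$ and $i$ gives $h(r,\infty) \le \bigvee_{A_0} a(r,\infty)$, completing the equality. I expect the heart of the argument, and its only real obstacle, to be this approximation step: producing enough bounded members of $\widehat{G}$ beneath $h$ whose cozeros exhaust each $h(q,\infty)$, which is exactly where cozero density, the Lindel\"of extraction, the density of the constants of $G$, and the preceding lemma must be combined correctly; the ray bookkeeping afterwards is routine via Theorem~\ref{Thm:2}.
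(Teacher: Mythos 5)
Your proof is correct, and it manufactures essentially the same countable family as the paper does; the genuine difference lies in how the join is verified. The paper fixes a countable dense $R \subseteq \mathbb{R}^+(G)$, uses cozero density of $\mu_G$ (Theorem \ref{Thm:6}) and the Lindel\"of property of the cozero elements $h(q,\infty)$ to extract countable sets $G_q \subseteq G^+$ with $\bigvee_{G_q}\coz\widehat{g} = h(q,\infty)$, and closes each $G_q$ under $g \mapsto mg\wedge q$; your $A_0 = \{n\widehat{g}_{q,i}\wedge p\}$ is the same device, with rational $q$ and truncation at constants $p \in D_0\cap(0,q)$ rather than at $q$ itself. But where the paper then argues by contradiction --- given an upper bound $k$ of $G_0$ with $k<h$, it finds $s$ with $a \equiv k(-\infty,s)\wedge h(s,\infty)>\bot$ and uses Corollary \ref{Cor:1}(5), $\coz g = \bigvee_m \coz(mg-k)^+$, to produce $g \in G_s$ and $m$ with $\coz(mg\wedge s-k)^+ > \bot$, contradicting $mg\wedge s \leq k$ --- you verify directly, via the ray identities $(n\widehat{g})(r,\infty)=\widehat{g}(r/n,\infty)$ and $\bigvee_n\widehat{g}(r/n,\infty)=\coz\widehat{g}$, that $h(r,\infty)=\bigvee_{A_0}a(r,\infty)$ for every $r$, i.e.\ that $h = \bigvee^\bullet A_0$, and then invoke Remark \ref{Rem:1}(4). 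Your route costs nothing extra and delivers a strictly stronger conclusion: the approximating join is \emph{pointwise}, very much in the spirit of the paper, whereas the paper's contradiction argument yields only the order-theoretic join (which is all that Proposition \ref{Prop:2} needs). Two small remarks: your opening reduction to bounded $h$ via Proposition \ref{Prop:3} is harmless but superfluous, since nothing in your main construction uses the bound $N$; and your explicit appeal to the unnumbered lemma preceding Lemma \ref{Lem:5} to secure $A_0 \leq h$ fills in a step the paper leaves tacit (its phrase \enquote{$G_0 \leq k < h$} presupposes exactly this). Do restrict to $n \geq 1$ so that $\coz(n\widehat{g}_{q,i}\wedge p) = \coz\widehat{g}_{q,i}$ holds as stated.
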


\begin{proof}
Given $0 < h \in \mathcal{R}^+L$ and $q \in \mathbb{R}^+(G)$, 
$h(q,\infty) = \coz(h - q)^+ $ is a cozero element of $L$ and is therefore
Lindel\"{o}f.  Since $\mu_G : G \to \mathcal{R}L$ is cozero dense, this 
element is the join of those of the form $\coz \widehat{g}$, $g \in G^+$.
Let $G_q$ be a countable subset of $G^+$ such that 
$h(q,\infty) = \bigvee_{G_q} \coz \widehat{g}$.  
By suitably restricting and augmenting
$G_q$, we may assume that $g \leq q$ for all $g \in G_q$, and that 
$g \in G_q$ implies $mg \wedge q \in G_q$ for all integers $m$.
Finally, let $G_0 \equiv \bigcup_{0<q\in R}G_q$ for some countable 
dense subset $R \subseteq \mathbb{R}^+(G)$.

We claim that $h = \bigvee G_0$. If not then $G_0 \leq k < h$ for some 
$k \in \mathbb{R}L$, so that by Lemma \ref{Lem:3}(2) there is some 
$q \in R$ such that $k(q,\infty) < h(q,\infty)$. More is true; there must
be some $s > q$ in $R$ for which $a \equiv k(-\infty,s) \wedge h(s,\infty) > \bot$, 
for otherwise $h(s,\infty) \leq k(-\infty,s)^*$ for all $s > q$ would imply 
\[
h(q,\infty) =
\bigvee_{q < s}h(s,\infty) \leq \bigvee_{q < s}k(-\infty,s)^* 
= k(q,\infty),
\]
contrary to assumption. Now $h(s,\infty) = \bigvee_{G_s} \coz \widehat{g}$, 
so we may find $g \in G_s$ such that $\coz g \wedge a > \bot$. 
Since $\coz g = \bigvee_m\coz(mg - k)^+$ by Corollary \ref{Cor:1}(5),
there exists some integer $m$ for which $\coz(mg - k)^+ \wedge a > \bot$.
In sum, we have arranged that 
\begin{align*}
\coz(mg\wedge s -k)^+ 
&= \coz ((mg - k)^+\wedge (s - k)^+)
= \coz (mg - k)^+\wedge \coz(s - k)^+ \\
&= \coz (mg - k)^+ \wedge k(-\infty,s) > \bot.
\end{align*}
But $g \in G_s$ implies $mg\wedge s \in G_s$, hence $mg \wedge s \leq k$,
contrary to the information displayed above. This completes the proof of the claim
and the lemma.   

\end{proof}

\begin{proposition}\label{Prop:2} 
If $\bigwedge \mathbb{R}^+(G) = 0$ and $G^*$ is conditionally 
$\sigma$-complete then $\widehat{G}^* = \mathcal{R}^*L$.
\end{proposition}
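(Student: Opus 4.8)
The plan is to show that the Madden embedding $\mu_G$ carries $G^*$ onto all of $\mathcal{R}^*L$. Since $\mu_G$ restricts to an $\ell$-isomorphism of $G^*$ onto $\widehat{G}^*$, and since both are groups, it suffices to produce, for each $0 \leq h \in \mathcal{R}^*L$, an element $g \in G^*$ with $\widehat{g} = h$; a general bounded $h$ then follows by splitting $h = h^+ - h^-$. Fix such an $h$. Because $\mathbb{R}^+(G)$ contains every positive integer, I can choose an integer $N \in \mathbb{R}^+(G)$ with $h \leq N$. By Lemma \ref{Lem:5} I may write $h = \bigvee_n \widehat{g_n}$ for some countable family $\{g_n\} \subseteq G^+$, and since $\widehat{g_n} \leq h \leq N$ forces $g_n \leq N$, this family is bounded in $G^*$. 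The conditional $\sigma$-completeness of $G^*$ then furnishes $g \equiv \bigvee_n g_n \in G^*$, and it remains only to identify $\widehat{g}$ with $h$.

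The decisive step is the claim that $\widehat{G}^*$ is \emph{order dense} in $\mathcal{R}^*L$, i.e.\ that below every $0 < \phi \in \mathcal{R}^*L$ there lies a positive element of $\widehat{G}^*$. To see this, note $\coz \phi = \phi(0,\infty) = \bigvee_{q}\phi(q,\infty)$, where $q$ ranges over the positive members of $\mathbb{R}^+(G)$ (these are coinitial in $(0,\infty)$ since $\bigwedge\mathbb{R}^+(G) = 0$, and $\phi(q,\infty)$ is antitone in $q$). As $\phi > 0$, some $a \equiv \phi(q,\infty) > \bot$ with $q \in \mathbb{R}^+(G)$. Cozero density of $\mu_G$ then yields $g' \in G$ with $\bot < \coz\widehat{g'} \leq a$; setting $e \equiv |g'| \wedge q \in G^*$ gives $\coz\widehat{e} = \coz\widehat{g'} \leq a = \phi(q,\infty)$ and $\widehat{e} \leq q$, so the unlabeled lemma preceding Lemma \ref{Lem:5} (whose proof is valid for real $q$, not just rational) delivers $0 < \widehat{e} \leq \phi$, as required.

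With order density in hand, the conclusion is pure assembly. Lemma \ref{Lem:7}(1) asserts that suprema computed in an order-dense $\ell$-subgroup agree with those computed in the overgroup; applying this to $\widehat{G}^* \leq \mathcal{R}^*L$ and the family $\{\widehat{g_n}\}$, the identity $\widehat{g} = \bigvee_n \widehat{g_n}$ valid in $\widehat{G}^*$ persists in $\mathcal{R}^*L$. On the other hand $h = \bigvee_n \widehat{g_n}$ holds in $\mathcal{R}L$, and since $h \in \mathcal{R}^*L$ it is also the supremum of the $\widehat{g_n}$ in $\mathcal{R}^*L$. Uniqueness of suprema then forces $\widehat{g} = h$, so $h \in \widehat{G}^*$. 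I expect the order-density claim to be the only substantive obstacle: the supremum $g$ exists for soft reasons (Lemma \ref{Lem:5} plus $\sigma$-completeness), but pinning $\widehat{g}$ exactly to $h$ hinges on the fact that no element of $\widehat{G}^*$ can slip strictly between $h$ and $\widehat{g}$, which is precisely what order density rules out.
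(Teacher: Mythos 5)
Your proof is correct and takes essentially the same route as the paper's: Lemma \ref{Lem:5} produces a countable bounded family from $\widehat{G}^*$ with join $h$, conditional $\sigma$-completeness supplies its supremum in $G^*$, and Lemma \ref{Lem:7}(1) (via order density of $\widehat{G}^*$ in $\mathcal{R}^*L$) identifies the image of that supremum with $h$. The only variation is that you establish the order-density sub-claim directly from cozero density and the unlabeled lemma (correctly noting its proof works for real $q$), whereas the paper extracts it from Lemma \ref{Lem:5} itself; both justifications are sound.
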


\begin{proof}
Given $0 < h \in \mathcal{R}^*L$, we know from Lemma \ref{Lem:5}
that $h=  \bigvee A$ for some countable subset
$A \subseteq \downset{h}_{\widehat{G}}$. 
Since $h$ is bounded by some multiple of $1$, so 
is $A$. By virtue of the conditional $\sigma$-completeness of $G$, 
$A$ has a supremum $g_0$ in $G$.  Finally, $\widehat{g}_0 = h$ by 
Lemmas \ref{Lem:7} and \ref{Lem:5}. 
\end{proof}

\begin{corollary}
\label{Cor:3}A conditionally $\sigma $-complete $\mathbf{W}$-object which
contains arbitrarily small positive multiples of $1$ contains all real
multiples of $1$. That is, $\bigwedge\mathbb{R}^+(G) = 0$ 
implies $\mathbb{R}^{+}\left( G\right) =\mathbb{R}^{+}$.
\end{corollary}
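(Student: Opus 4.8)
The plan is to deduce this as an essentially immediate consequence of Proposition \ref{Prop:2}. That proposition already asserts that $\bigwedge\mathbb{R}^+(G) = 0$ together with conditional $\sigma$-completeness of $G^*$ forces $\widehat{G}^* = \mathcal{R}^*L$. Once we have this equality, the corollary falls out by observing that every constant function is bounded, hence lies in $\mathcal{R}^*L$, hence is the image under $\mu_G$ of a real multiple of the unit in $G$.

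The first step is a small bookkeeping matter: Proposition \ref{Prop:2} is stated for $G^*$, so I must first check that conditional $\sigma$-completeness of $G$ passes to $G^*$. This is routine. If $A \subseteq G^*$ is a countable family bounded above by some $N\cdot 1 \in G^*$, then $A$ is a bounded countable subset of $G$, so it has a supremum $g_0$ in $G$ by hypothesis. Since $g_0 \leq N\cdot 1$ and $g_0 \geq a_0$ for any chosen $a_0 \in A$ (which is itself bounded below by a multiple of $1$), the element $g_0$ is bounded, so $g_0 \in G^*$. Thus $G^*$ is conditionally $\sigma$-complete, and Proposition \ref{Prop:2} applies to give $\widehat{G}^* = \mathcal{R}^*L$.

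The second step is to read off the conclusion. For any $r \in \mathbb{R}$, the constant function $r \in \mathcal{R}L$ (described explicitly by the formula following Theorem \ref{Thm:2}) is bounded, so $r \in \mathcal{R}^*L = \widehat{G}^*$. Since $\widehat{G}^* \subseteq \widehat{G}$ and $\mu_G$ carries the real multiple $r\cdot 1 \in G$ precisely to the constant function $r$, we conclude that $r\cdot 1 \in G^* \subseteq G$. In particular every positive real $r$ lies in $\mathbb{R}^+(G)$, so $\mathbb{R}^+(G) = \mathbb{R}^+$.

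There is no serious obstacle here; the argument is genuinely a corollary rather than an independent theorem. The only points requiring care are the passage from $G$ to $G^*$ in the first step and the identification, via the Madden representation, of the constant function $r$ in $\mathcal{R}L$ with the multiple $r\cdot 1$ in $G$ in the second step. Both are immediate from material already developed in the excerpt.
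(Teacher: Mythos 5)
Your proof is correct and takes essentially the same route as the paper: Corollary \ref{Cor:3} is stated there as an immediate consequence of Proposition \ref{Prop:2} (equivalently, of the implication from (1) and (2) to (4) in Theorem \ref{Thm:1}), since $\mathcal{R}^{\ast}L$ contains every constant function. The only detail the paper leaves implicit is the passage of conditional $\sigma$-completeness from $G$ to $G^{\ast}$, which you verify correctly.
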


The proof of Theorem \ref{Thm:1} is complete.

It is worthwhile to restate Theorem \ref{Thm:1} in the language of regular 
$\sigma$-frames. This is always possible, since the fact that $L$ is 
Lindel\"{o}f means that $L$ is isomorphic to $\mathcal{H}\coz L$.

\begin{theorem}\label{Thm:8}
Let $G$ be a $\mathbf{W}$-object with Madden frame $L$. Then conditions (1)
and (2) together are equivalent to conditions (3) and (4).

\begin{enumerate}
\item 
$\bigwedge \mathbb{R}^+(G) = 0$ .

\item 
$G^{\ast }$ is conditionally complete ($\sigma $-complete).

\item 
$L$ is extremally disconnected (basically disconnected).

\item[(3')] $L$ is isomorphic to $\mathcal{H}A$ for some regular $\sigma $%
-frame $A$ such that for all $C\subseteq A$ there exists a complemented
element $b\in A$ with 
\begin{equation*}
\forall~d\in A~\left( \forall~c\in C~\left( d\wedge c=\bot\right)
\Longleftrightarrow d\leq b\right) .
\end{equation*}
($L$ is isomorphic to $\mathcal{H}A$ for some regular $\sigma$-frame $A$
such that for all $c\in A$ there exists a complemented element $b\in A$ with 
\begin{equation*}
\forall~d\in A~\left( d\wedge c=\bot\Longleftrightarrow d\leq b\right) .)
\end{equation*}

\item[(4)] The Madden representation carries $G^{\ast}$ onto $\mathcal{R}%
^{\ast}L$.
\end{enumerate}
\end{theorem}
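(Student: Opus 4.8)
The plan is to piggyback on Theorem~\ref{Thm:1}, which already yields that conditions (1) and (2) together are equivalent to conditions (3) and (4). As condition (4) is repeated verbatim, the only new content is the equivalence of (3) with (3'): we must show that extremal (basic) disconnectivity of $L$ is faithfully recorded by the displayed property of the regular $\sigma$-frame $A \equiv \coz L$. Throughout I would identify $L$ with $\mathcal{H}A$ as in the remark preceding the theorem, so that $A$ sits inside $L$ as a join-dense sub-$\sigma$-frame: every $a \in L$ satisfies $a = \bigvee\{d \in A : d \leq a\}$.

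First I would isolate two bridging facts. The first transfers pseudocomplements to polars: for any $a \in L$, join-density gives $a^* = \bigvee\{d \in A : d \wedge a = \bot\}$, and for $d \in A$ one has $d \leq a^*$ iff $d \wedge a = \bot$; writing $a = \bigvee C$ with $C \subseteq A$ and invoking the frame distributive law, $d \wedge a = \bot$ holds iff $d \wedge c = \bot$ for every $c \in C$. Thus $\{d \in A : d \leq a^*\}$ is exactly the polar $C^\perp \equiv \{d \in A : d \wedge c = \bot \text{ for all } c \in C\}$. The second fact is that every complemented element $e$ of $L$ is a cozero element, hence lies in $A$: the assignment sending $(r,\infty)$ to $\top$, to $e$, or to $\bot$ according as $r < 0$, $0 \leq r < 1$, or $r \geq 1$ meets the hypotheses of Lemma~\ref{Lem:1} (its condition (3) using $e \vee e^* = \top$, its first condition using that a complemented element is completely below itself), and so extends to some $g \in \mathcal{R}^+L$ with $\coz g = e$. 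Since the complement of $e$ is again complemented, both $e$ and its complement lie in $A$, so $e$ is in fact complemented within $A$.

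With these in hand the equivalence is brief, and I would present the extremally disconnected case, the basically disconnected one being its verbatim specialization to singletons $C = \{c\}$ (where $a = c$ is already cozero, matching the parenthetical clause of (3')). For (3)$\Rightarrow$(3'), given $C \subseteq A$ put $a = \bigvee C$; extremal disconnectivity supplies $a^* \vee a^{**} = \top$, so $a^*$ is complemented and hence belongs to $A$ by the second bridging fact. Taking $b \equiv a^*$, the first bridging fact shows that for $d \in A$ one has $d \leq b$ iff $d \wedge c = \bot$ for all $c \in C$, which is the required biconditional. For (3')$\Rightarrow$(3), given $a \in L$ set $C = \{d \in A : d \leq a\}$ so that $a = \bigvee C$, and let $b$ be the complemented element furnished by (3'); then $a^* = \bigvee C^\perp = \bigvee\{d \in A : d \leq b\} = b$, whence $a^* \vee a^{**} = b \vee b^* = \top$ because $b$ is complemented.

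The hard part is the second bridging fact---that a complemented element of $L$ is a cozero element, and so is available inside $A$. This is precisely where complete regularity of $L$ is indispensable, and it is what makes the whole reformulation go through: for a general Lindel\"{o}f frame the pseudocomplement $a^*$ need not be cozero, but extremal (basic) disconnectivity forces $a^*$ to be complemented, and complete regularity then deposits it in $A$. The remaining manipulations---transferring pseudocomplements to polars and back---are routine once join-density and frame distributivity are in place.
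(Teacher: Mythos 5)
Your proposal is correct and takes essentially the paper's route: the paper also obtains the equivalence of (1)+(2) with (3)+(4) directly from Theorem~\ref{Thm:1}, and treats (3)$\iff$(3') as the routine translation afforded by the isomorphism $L \cong \mathcal{H}\coz L$, which you have simply written out in full (join-density of $A = \coz L$, pseudocomplements computed as polars, and complemented $\Rightarrow$ cozero via the characteristic-function construction that the paper itself uses in Theorem~\ref{Thm:3}). One trivial slip: for your characteristic function it is condition (1) of Lemma~\ref{Lem:1} (namely $e \prec e$, i.e.\ $e^* \vee e = \top$) that needs complementedness, while condition (3) holds automatically; this does not affect the argument.
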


\subsection{$P$-frames and boolean frames\label{Subsec:4}}

Proposition \ref{Prop:16} holds that conditional pointwise completeness is
stronger than conditional completeness. In view of Theorem \ref{Thm:1}, 
then, the question naturally arises as to what condition on $\mathcal{M}G$
is equivalent to the conditional pointwise completeness of a 
$\mathbf{W}$-object $G$. The answer is that $\mathcal{M}G$ must be
boolean in order for $G$ to be conditionally pointwise complete,
and $\mathcal{M}G$ must be a $P$-frame in order for $G$ to be conditionally
pointwise $\sigma $-complete.

\begin{theorem}
\label{Thm:3}Let $G$ be a $\mathbf{W}$-object with Madden frame $L$. Then
conditions (1) and (2) together are equivalent to conditions (3) and (4).

\begin{enumerate}
\item $G$ contains arbitrarily small positive multiples of $1$.

\item $G^{\ast }$ is conditionally pointwise complete (conditionally
pointwise $\sigma $-complete).

\item $L$ is boolean (a $P$-frame).

\item The Madden representation carries $G^{\ast}$ onto $\mathcal{R}^{\ast}L$%
.
\end{enumerate}
\end{theorem}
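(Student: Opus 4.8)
The plan is to prove Theorem \ref{Thm:3} by leveraging the structure already established in Theorem \ref{Thm:1}, which characterizes conditional (ordinary) completeness in terms of extremal disconnectedness. The key observation is that pointwise completeness is \emph{strictly stronger} than ordinary completeness (Proposition \ref{Prop:16}), so I expect boolean-ness to be the stronger frame-theoretic condition corresponding to extremal disconnectedness, just as the $P$-frame condition strengthens basic disconnectedness. I would organize the proof around the cycle of implications, and since the boolean and $P$-frame cases run in parallel, I would present only the boolean (unconditional) case in detail, remarking that the $\sigma$-case follows mutatis mutandis.

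First I would establish $(3)\wedge(4)\Rightarrow(1)\wedge(2)$. Assuming $L$ is boolean, Theorem \ref{Thm:1} already gives that $\mathcal{R}^*L$ is conditionally complete and that the Madden representation carries $G^*$ onto $\mathcal{R}^*L$; what remains is to upgrade ordinary completeness to \emph{pointwise} completeness. The natural route is to take a bounded subset $K \subseteq G^* = \mathcal{R}^*L$ and show that its supremum is actually pointwise. This is exactly the content of the forward direction of Theorem \ref{Thm:7}: when $L$ is boolean, every existing supremum in $G$ is pointwise, provided $\bigwedge \mathbb{R}^+(G) = 0$. So I would invoke Corollary \ref{Cor:3} to note that condition (1) together with conditional completeness forces $\mathbb{R}^+(G) = \mathbb{R}^+$, hence $\bigwedge \mathbb{R}^+(G) = 0$, and then apply Theorem \ref{Thm:7} to conclude that every existing bounded supremum in $G^*$ is pointwise. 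Combined with the existence of these suprema from Theorem \ref{Thm:1}, this yields conditional pointwise completeness.

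For the reverse direction, $(1)\wedge(2)\Rightarrow(3)\wedge(4)$, I would argue as follows. By Proposition \ref{Prop:16}, conditional pointwise completeness of $G^*$ implies conditional completeness of $G^*$, so conditions (1) and (2) of the present theorem imply conditions (1) and (2) of Theorem \ref{Thm:1}. Theorem \ref{Thm:1} then delivers that $L$ is extremally disconnected and that the Madden representation carries $G^*$ onto $\mathcal{R}^*L$ (giving condition (4) immediately). It remains to strengthen extremal disconnectedness to boolean-ness. Here I would again turn to Theorem \ref{Thm:7}, whose reverse implication states that if all existing suprema in $G$ are pointwise then $L$ is boolean. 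The delicate point is that Theorem \ref{Thm:7} speaks of \emph{all} existing suprema, whereas conditional pointwise completeness only guarantees that \emph{bounded} families have pointwise suprema. I expect this to be the main obstacle: I must verify that the construction in the reverse half of Theorem \ref{Thm:7}, which produces sets $U, V$ witnessing the complementation of an arbitrary $a \in L$, can be carried out using only bounded subsets of $G^*$. Inspecting that argument, the relevant families $U = \{g : \coz g \leq a,\ 0 \leq g \leq 1\}$ and $V = \{g : \coz g \leq a^*,\ 0 \leq g \leq 1\}$ are already bounded by $1$, so the pointwise supremum $\bigvee^\bullet(U \cup V) = 1$ is guaranteed by conditional pointwise completeness of $G^*$, and the complementation argument goes through verbatim. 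This confirms that boundedness is no impediment, and the cycle of implications closes.
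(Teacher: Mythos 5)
Your forward direction, and your reverse direction in the boolean case, are correct, and they take a genuinely different route from the paper: where you recycle Theorem \ref{Thm:7} together with Theorem \ref{Thm:1}, the paper argues directly, constructing the pointwise supremum on right rays and verifying the frame-map conditions of Lemma \ref{Lem:1} for the forward implication, and (for the reverse implication) re-running the component argument of Proposition \ref{Prop:4} on a pointwise supremum. Your key check for the boolean case --- that the families $U$ and $V$ from the reverse half of Theorem \ref{Thm:7} are bounded by $1$, so that conditional pointwise completeness can stand in for the hypothesis that all existing suprema are pointwise --- is exactly right, and that half of your argument closes cleanly.

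The gap is in the $\sigma$-case, which you dismiss as ``mutatis mutandis.'' Conditional pointwise $\sigma$-completeness provides pointwise suprema only for \emph{countable} bounded subsets of $G^{\ast}$, but the families $U = \{g : \coz g \leq a,\ 0 \leq g \leq 1\}$ and $V = \{g : \coz g \leq a^{\ast},\ 0 \leq g \leq 1\}$ are uncountable in general, so the step ``$\bigvee^{\bullet}(U \cup V) = 1$ is guaranteed by conditional pointwise $\sigma$-completeness'' fails: boundedness was not the only obstacle, and countability is nowhere addressed. Nor can you cite the $P$-frame half of Theorem \ref{Thm:7} as a black box, since its hypothesis (every existing countable supremum in $G$ is pointwise) is not implied by condition (2), which concerns only bounded countable families in $G^{\ast}$. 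This is exactly why the paper's proof of the countable case uses an intrinsically countable family: for a cozero $a = \coz f$ with $1 \geq f \in G^{\ast}$ it takes $g_{n} \equiv nf \wedge 1$, forms $g = \bigvee^{\bullet}_{\mathbb{N}} g_{n}$, runs the argument of Proposition \ref{Prop:4} to get $\coz g \vee \coz(1-g) = \top$, and uses pointwiseness to identify $\coz g = a$ and $\coz(1-g) = a^{\ast}$. (Your route can be repaired --- extract countable subfamilies of $U$ and $V$ via the Lindel\"{o}f property of cozero elements --- but for $V$ this requires first knowing that $a^{\ast}$ is a cozero, which needs the basic disconnectedness supplied by Theorem \ref{Thm:1}; in no sense is it ``verbatim.'') Note finally that the two cases are not parallel in the paper either, but in the opposite order from yours: the boolean implication $(1)\wedge(2)\Rightarrow(3)$ there \emph{uses} the countable case, since the characteristic functions $g_{i}$ of the cozero elements $a_{i}$ are well defined only after cozero elements are known to be complemented. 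So treating the boolean case in detail and waving at the countable case cannot work; the countable case needs its own argument.
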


\begin{proof}
We first prove the countable version of this theorem. Suppose $L$ is a $P$%
-frame, identify $G$ with its Madden representation $\widehat{G}\leq\mathcal{%
R}L$, and suppose $G^{\ast}=\mathcal{R}^{\ast}L$. Then $G$ certainly
satisfies (1); in order to verify that $G^{\ast}$ is pointwise $\sigma$%
-complete, consider a countable subset $F\subseteq G$ with upper bound $g\in
G^{\ast}$. Define a function $f_{0}$ on the right rays by the rule 
\begin{equation*}
f_{0}\left( r,\infty\right) \equiv{\bigvee\nolimits}_{F}f\left(
r,\infty\right), \quad r\in\mathbb{R}.
\end{equation*}
We claim that $f_{0}$ extends to a unique member of $\mathcal{R}L$, which
must then lie in $G^{\ast}$ by virtue of its convexity, since clearly $f\leq
f_{0}\leq g$ by Lemma \ref{Lem:3}(2). To establish this claim we need only
check the three hypotheses of Lemma \ref{Lem:1}. The first hypothesis
clearly holds, since a complemented element of any frame is rather below
itself. To verify the second, simply observe that 
\begin{equation*}
\bigvee_{s>r}f_{0}\left( s,\infty\right) =\bigvee_{s>r}\bigvee_{F}f\left(
s,\infty\right) =\bigvee_{F}\bigvee_{s>r}f\left( s,\infty\right)
=\bigvee_{F}f\left( r,\infty\right) =f_{0}\left( r,\infty\right) .
\end{equation*}
To verify the third hypothesis, note that 
\begin{equation*}
{\bigvee\nolimits}_{\mathbb{R}}f_{0}\left( r,\infty\right) = {%
\bigvee\nolimits}_{\mathbb{R}}{\bigvee\nolimits}_{F}f\left( r,\infty\right)
= {\bigvee\nolimits}_{F}{\bigvee\nolimits}_{\mathbb{R}}f\left(r,\infty%
\right) = \top.
\end{equation*}
And, since $f_{0}\left( r,\infty\right) =\bigvee_{F}f\left( r,\infty \right)
\leq g\left( r,\infty\right) $ for all $r\in\mathbb{R}$, 
\begin{equation*}
{\bigvee\nolimits}_{\mathbb{R}}f_{0}\left( r,\infty\right) ^{\ast}\geq {%
\bigvee\nolimits}_{\mathbb{R}}g\left( r,\infty\right)^{\ast} \geq {%
\bigvee\nolimits}_{\mathbb{R}}g\left(-\infty,r\right) =\top.
\end{equation*}
The second inequality holds because $g\left( -\infty,r\right) \wedge g\left(
r,\infty\right) =g\left( \emptyset\right) =\bot$.

Now suppose that $G$ satisfies (1) and (2), and again identify it with its
Madden representation. Then $G^{\ast}$ is $\sigma$-complete by Remark \ref%
{Rem:1}(5), so that Theorem \ref{Thm:1} allows us to conclude that $G^{\ast}=%
\mathcal{R}^{\ast}L$. To verify (3), consider a cozero element $a\in L$, say 
$a=\coz f$ for some $f\in\mathcal{R}^{+} L$. By replacing $f$ by $f\wedge1$
if necessary, we may assume that $1\geq f\in G^{\ast}$. Define the sequence $%
\left\{ g_{n}\right\} $ in $G^{\ast}$ by setting 
\begin{equation*}
g_{n}\equiv nf\wedge1,\;n\in\mathbb{N},
\end{equation*}
and let $g\in G^{\ast}$ be such that $g = \bigvee^\bullet _{\mathbb{N}}g_{n}$%
. Since the $g_{n}$'s here are defined exactly as in the proof of Theorem %
\ref{Thm:1}, and since $g=\bigvee_{\mathbb{N}}g_{n}$, the argument given
there applies here, and shows that 
\begin{equation*}
\coz g\vee\coz\left( 1-g\right) = \coz 1=\top.
\end{equation*}
But when we observe that $\coz g=g\left( 0,\infty\right) =a$ and $\coz\left(
1-g\right) =a^{\ast}$, we come to the desired conclusion: $a\vee
a^{\ast}=\top$.

It remains to prove the version of the theorem in which the pointwise joins
are of unrestricted cardinality. For the most part the argument goes along
the lines of the countable case. The only significant departure is the
implication from (1) and (2) to (3). So suppose $G$ contains arbitrarily
small positive multiples of $1$ and that $G^{\ast}$ is pointwise complete,
and consider an arbitrary element $a_{0}\in L$. Express $a_{0}$ in the form $%
\bigvee_{I}a_{i}$, where $\left\{ a_{i}:i\in I\right\} $ is the set of
cozero elements below $a_{0}$. For each $i\in I$ let $g_{i}$ be the
characteristic function of $a_{i}$, i.e., 
\begin{equation*}
g_{i}\left( U\right) =\left\{ 
\begin{array}{lll}
\bot & \text{if} & 0,1\notin U \\ 
a_{i} & \text{if} & 0\notin U\ni1 \\ 
a_{i}^{\ast} & \text{if} & 1\notin U\ni0 \\ 
\top & \text{if} & 0,1\in U%
\end{array}
\right. .
\end{equation*}
These functions lie between $0$ and $1$ and hence are in $G^{\ast}$. Let $%
g_{0}\equiv \bigvee^\bullet _{I}g_{i}$. By inspection one sees that%
\begin{equation*}
g_{0}\left( U\right) =\left\{ 
\begin{array}{lll}
\top & \text{if} & r<0 \\ 
a_{0} & \text{if} & 0\leq r<1 \\ 
\bot & \text{if} & 1\leq r%
\end{array}
\right. ,
\end{equation*}
the characteristic function of $a_{0}$. But since $a_{0}=g_{0}\left(
3/4,\infty\right) \prec g_{0}\left( 1/4,\infty\right) =a_{0}$, it follows
that $a_{0}$ is complemented. The shows that $L$ is boolean, and completes
the proof.
\end{proof}

A quotient of a $P$-frame need not be a $P$-frame (\cite%
{BallWaltersZenk:2010}). However, a $C$-quotient of a $P$-frame is clearly a 
$P$-frame, for a $C$-quotient $f:L\rightarrow M$ is coz-onto, meaning every
cozero element of $M$ is the image under $f$ of a cozero element of $L$.
Since the cozero elements of $L$ are complemented, so are their images. An
alternative argument can be made using Theorem \ref{Thm:3}.

\begin{corollary}
\label{Cor:6}A $C$-quotient of a $P$-frame is a $P$-frame.
\end{corollary}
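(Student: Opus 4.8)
The plan is to prove the corollary directly from the intrinsic characterization of a $P$-frame---namely, that every cozero element is complemented---combined with the defining coz-onto property of a $C$-quotient. The single structural fact doing all the work is that a frame homomorphism, preserving finite meets, finite joins, and both bounds, must carry a complemented element to a complemented element, with the image of the complement serving as complement of the image.

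So let $f : L \to M$ be a $C$-quotient with $L$ a $P$-frame, and let $b \in \coz M$ be arbitrary. First I would invoke the coz-onto property to produce a cozero element $c \in \coz L$ with $f(c) = b$. Since $L$ is a $P$-frame, $c$ is complemented, say $c \vee c^{\ast} = \top$ and $c \wedge c^{\ast} = \bot$. Applying $f$ and using that it is a frame homomorphism then yields
\[
b \vee f(c^{\ast}) = f(c \vee c^{\ast}) = f(\top) = \top
\quad\text{and}\quad
b \wedge f(c^{\ast}) = f(c \wedge c^{\ast}) = f(\bot) = \bot,
\]
so that $f(c^{\ast})$ is a complement of $b$ in $M$. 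As $b$ was an arbitrary cozero element of $M$, every cozero element of $M$ is complemented, which is precisely the condition for $M$ to be a $P$-frame.

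There is essentially no obstacle here, and the lone point requiring the slightest care is directional: one must pull the cozero element of $M$ back through $f$ rather than push a cozero element of $L$ forward. Pushing forward would only establish that images of cozero elements of $L$ are complemented, not that every cozero element of $M$ arises this way; it is the coz-onto hypothesis that supplies the needed preimage $c$ outright. Should one prefer to avoid the elementary argument entirely, an alternative route goes through Theorem \ref{Thm:3}: realize $L$ as the Madden frame of $G \equiv \mathcal{R}L$, conclude from that theorem that $G^{\ast} = \mathcal{R}^{\ast}L$ is conditionally pointwise $\sigma$-complete, and transport this completeness along the quotient to identify $M$ as a $P$-frame. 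That path is decidedly more roundabout, however, and I would reserve it as a remark rather than adopt it as the main proof.
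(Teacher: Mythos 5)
Your proof is correct, and in fact both of the routes you identify appear in the paper; the only difference is which one gets top billing. The elementary argument you adopt as your main proof---pull a cozero element of $M$ back through the coz-onto map $f$ to a cozero element of $L$, use that it is complemented there, and note that a frame homomorphism carries complements to complements---is precisely the argument the paper sketches in the paragraph immediately preceding the corollary. The route through Theorem \ref{Thm:3}, which you relegate to a closing remark, is the one the paper chooses for its formal proof environment: conditions (3) and (4) of that theorem hold for $L$, hence conditions (1) and (2) hold for $\mathcal{R}L$; these two conditions are inherited by the $\mathbf{W}$-quotient $\mathcal{R}M$ induced by $f$; and a second application of the theorem yields that $M$ is a $P$-frame. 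Your directional caution is also well placed: pushing cozero elements of $L$ forward would not suffice, and it is exactly the coz-onto hypothesis that makes the pullback available. Your presentation is the more self-contained of the two; the paper's displayed proof serves in part to illustrate how its main theorem can be deployed.
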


\begin{proof}
A $C$-quotient map $f:L\rightarrow M$ induces a $\mathbf{W}$-surjection $%
\mathcal{R}f:\mathcal{R}L\rightarrow \mathcal{R}M$. If $L$ is a $P$-frame
then conditions (3) and (4) of Theorem \ref{Thm:3} hold, and therefore
conditions (1) and (2) are true of $\mathcal{R}L$. But the latter two
conditions are clearly inherited by any quotient of $\mathcal{R}L$, and
therefore are true of $\mathcal{R}M$. A second application of the theorem
gives the desired conclusion.
\end{proof}

\section{Unconditional pointwise completeness\label{Sec:6}}

In this section we define and analyze the ultimate, or unconditional form of
pointwise completeness. This naturally raises the question of precisely what
unconditional pointwise completeness ought to mean. Our definition comes in
Subsection \ref{Subsec:9}, but it requires a digression to review essential
extensions in Subsection \ref{Subsec:7} and cuts in Subsection \ref{Subsec:8}%
. The reader may wish to skip this material upon a first reading, returning
to it as necessary.

\subsection{Essential extensions and complete embeddings}

\label{Subsec:7} In this subsection we recall the basic facts concerning
essential extensions in $\mathbf{W}$. We do so not only because we will make
use of these facts in the sequel, but also for the reader\rq{}s convenience,
for these extensions appear in the literature under various names and with
various definitions. Because this material is well known (see, e.g., \cite%
{BigardKeimelWolfenstein:1977}), we offer here only hints of proofs.

Recall that the \emph{booleanization} of a frame $M$ is the frame map 
\begin{equation*}
b_M :M\rightarrow M^{\ast \ast }= (a\mapsto a^{\ast \ast }).
\end{equation*}
In spatial terms, this is the map which sends an open set to its
regularization, i.e., the smallest regular open subset containing it.

\begin{lemma}
\label{Lem:9}The following are equivalent for an extension $G\leq H$ in $%
\mathbf{W}$.

\begin{enumerate}
\item The embedding $G\rightarrow H$ is an essential monomorphism, i.e., any
morphism out of $H$ whose restriction to $G$ is one-one is also one-one on $%
H $.

\item Every nontrivial $\mathbf{W}$-kernel of $H$ meets $G$ nontrivially.

\item Every nontrivial polar of $H$ meets $G$ nontrivially.

\item $G$ is large in $H$, i.e., every nontrivial convex $\ell $-subgroup of 
$H$ meets $G$ nontrivially.

\item If $H$ is divisible then these conditions are equivalent to $G$ being
order dense in $H$, i.e., for every $0<h\in H$ there exists some $0<g\in G$
such that $g\leq h$.

\item The frame map $f:L\equiv \mathcal{M}G\rightarrow \mathcal{M}H\equiv M$
which realizes the extension $G\leq H$ \enquote*{drops} to an isomorphism of
the booleanizations. 
\begin{figure}[h]
\setlength{\unitlength}{4pt}
\par
\begin{center}
\begin{picture}(12,10)(0,2)
\small
\put(0,12){\makebox(0,0){$L$}}
\put(12,12){\makebox(0,0){$M$}}
\put(12,0){\makebox(0,0){$M^{**}$}}
\put(0,0){\makebox(0,0){$L^{**}$}}
\put(2,12){\vector(1,0){8}}
\put(0,10){\vector(0,-1){8}}
\put(12,10){\vector(0,-1){8}}
\put(2.5,0){\vector(1,0){6}}
\put(6,13.5){\makebox(0,0){$f$}}
\put(-2,6){\makebox(0,0){$b_L$}}
\put(14.5,6){\makebox(0,0){$b_M$}}
\end{picture}
\end{center}
\end{figure}
\end{enumerate}
\end{lemma}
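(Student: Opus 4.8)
The plan is to run the equivalence $(1)\Leftrightarrow(2)$ first, then settle $(2)\Leftrightarrow(3)\Leftrightarrow(4)$ and the divisible case $(5)$ by the classical theory of large subgroups, and finally to identify $(3)$ with $(6)$ through the correspondence between polars and regular $\mathbf{W}$-kernels. Only this last identification is specific to the Madden machinery, so that is where I would spend real effort; the rest I would dispatch with hints and a citation to \cite{BigardKeimelWolfenstein:1977}.

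For $(1)\Leftrightarrow(2)$ I would invoke the morphism--kernel correspondence. Every $\mathbf{W}$-morphism $\varphi$ out of $H$ has for its kernel a $\mathbf{W}$-kernel of $H$, and conversely every $\mathbf{W}$-kernel $K$ is the kernel of the quotient $H\to H/K$ by Lemma \ref{Lem:4}. The restriction $\varphi|_G$ is one-one iff $\ker\varphi\cap G=0$, while $\varphi$ itself is one-one iff $\ker\varphi=0$. Thus $(1)$ says exactly that a $\mathbf{W}$-kernel of $H$ meeting $G$ trivially is itself trivial, which is the contrapositive of $(2)$.

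The implications $(2)\Leftrightarrow(3)\Leftrightarrow(4)$ and $(5)$ are the classical facts about large (essential) subgroups, for which I would cite \cite{BigardKeimelWolfenstein:1977} and only indicate the mechanism. Since polars and $\mathbf{W}$-kernels are particular convex $\ell$-subgroups, $(4)$ yields $(2)$ and $(3)$ at once, and since every polar is a $\mathbf{W}$-kernel, $(2)$ yields $(3)$; the substantive step is $(3)\Rightarrow(4)$. There one reduces largeness to the principal case: given a nonzero convex $\ell$-subgroup $C$ and $0<h\in C$, the double polar $h^{\perp\perp}$ is a nonzero polar, and the archimedean identity that $0<g'\in h^{\perp\perp}$ forces $g'\wedge h>0$ (because $h^{\perp}\cap h^{\perp\perp}=0$) is what drives the argument producing a nonzero element of $G\cap C$. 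For $(5)$, order density trivially implies largeness, since an element $0<g\le h$ of $G$ lies in any convex $\ell$-subgroup containing $h$; the converse is the classical fact that in the divisible archimedean setting essential extensions are precisely the order-dense ones, which I would cite rather than reprove.

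The heart is $(3)\Leftrightarrow(6)$. Here I would use that $f:L\to M$ is $\mathcal{M}$ applied to the inclusion $G\le H$, sending a $\mathbf{W}$-kernel $J$ of $G$ to the kernel it generates in $H$. The key input is the natural isomorphism between the Boolean algebra of polars of a $\mathbf{W}$-object and the booleanization of its Madden frame: the cozero map $g\mapsto\coz\widehat{g}$ carries a principal polar $g^{\perp\perp}$ to the regular element $(\coz\widehat{g})^{\ast\ast}$, and since $L$ is regular Lindel\"{o}f these generate $L^{\ast\ast}$, giving $\mathrm{Pol}(G)\cong L^{\ast\ast}$ and likewise $\mathrm{Pol}(H)\cong M^{\ast\ast}$ (see \cite{BallHager:1991}). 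Under these identifications $f$ transports to the map carrying a polar of $G$ to the polar it generates in $H$, with candidate inverse $A\mapsto A\cap G$. I would then note that $b_M\circ f$ annihilates exactly the dense kernels and so factors through $b_L$, producing the drop $L^{\ast\ast}\to M^{\ast\ast}$; condition $(3)$, that every nonzero polar of $H$ meets $G$, is precisely codensity of this drop, and for a surjection of Boolean frames codensity is injectivity, whence the drop is an isomorphism. Conversely an isomorphism of booleanizations returns the polar correspondence and hence $(3)$. The main obstacle is exactly this bookkeeping: verifying that the transported $f$ really is the polar-generation map and that essentiality matches its bijectivity, the naive booleanization functor being delicate for arbitrary frame maps. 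I would sidestep general functoriality by arguing entirely through the polar correspondence and the density/codensity criterion for frame surjections used in the proof of Theorem \ref{Thm:6}(4), rather than appealing to an abstract booleanization functor.
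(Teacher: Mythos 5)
Your decomposition is the same as the paper's: (1)$\Leftrightarrow$(2) via the kernel--quotient correspondence, the equivalences among (2), (3), (4) and the divisible case (5) dispatched as classical facts with a citation to \cite{BigardKeimelWolfenstein:1977}, and (6) identified with (3) through the bijection between polars of a $\mathbf{W}$-object and elements of the booleanization of its Madden frame. The paper's own proof is exactly this sketch (it explicitly offers ``only hints of proofs''), so structurally you match it, including the delicate point that the drop and its bijectivity should be extracted from the polar correspondence rather than from any general functoriality of booleanization.

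There is, however, one genuine defect: your indicated mechanism for (3)$\Rightarrow$(4), which you correctly single out as the substantive step, dead-ends if written out. From $0<h\in C$ and $0<g'\in G\cap h^{\bot\bot}$ you conclude $g'\wedge h>0$; but $g'\wedge h$ lies in $C$ and not, in general, in $G$ (it mixes an element of $G$ with an element of $H$), so it is not the sought element of $G\cap C$, and nothing in your sketch bridges this. Moreover the fact you invoke, $h^{\bot}\cap h^{\bot\bot}=0$, is not an \enquote*{archimedean identity} --- it holds in every $\ell$-group --- whereas the implication (3)$\Rightarrow$(4) genuinely requires archimedeanness: for non-archimedean $\ell$-groups it fails (in a lexicographic extension the only nonzero polar is the whole group, yet a proper convex $\ell$-subgroup can miss $G$), so any proof that never uses the archimedean property cannot be right. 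A correct completion applies (3) a \emph{second} time: by archimedeanness choose $n$ with $\left(n(g'\wedge h)-g'\right)^{+}>0$, apply (3) to the nonzero polar $P\equiv\left(\left(n(g'\wedge h)-g'\right)^{+}\right)^{\bot\bot}$ to get $0<g_{1}\in G\cap P$; then $g_{1}\wedge g'>0$ since $P\subseteq g'^{\bot\bot}$, and $\left(g_{1}\wedge g'-n(g'\wedge h)\right)^{+}\in P\cap P^{\bot}=0$, so $0<g_{1}\wedge g'\leq n(g'\wedge h)\leq nh$, whence $g_{1}\wedge g'\in G\cap C$ by convexity. (The paper's hint is the equivalent intermediate claim that, under (3), $K\cap G=0$ forces $K^{\bot\bot}\cap G=0$.) Since you defer the actual proof to the literature this does not sink the proposal as a whole, but the mechanism you describe would mislead a reader trying to reconstruct it.
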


\begin{proof}
The equivalence of (1) with (2) is evident, and the implication from (2) to
(3) is a consequence of the fact that polars are $\mathbf{W}$-kernels. To
show that (3) implies (4), one shows that $K^{\bot\bot} \cap G = 0$ for any
convex $\ell$-subgroup $K \leq H$ such that $K \cap G = 0$. And (4) clearly
implies (2).

Part (3) can be interpreted as saying that the extension is essential iff
the polars of $G$ and $H$ are in bijective correspondence by intersection.
But the polars of any $\mathbf{W}$-object are in bijective correspondence
with the elements of its booleanization via 
\begin{eqnarray*}
a^{\ast \ast } &\longrightarrow &\left\{ g\in G:\coz\left\vert g\right\vert
\leq a^{\ast \ast }\right\} \\
\bigvee\nolimits_{P}\coz g &\longleftarrow &P
\end{eqnarray*}
This observation can be readily converted into a proof that (6) is
equivalent to the other conditions.
\end{proof}

From Lemma \ref{Lem:9}(6) we see that for any frame $L$ the dual $\mathcal{R}%
b_{L}$ of the booleanization map provides an essential extension $\mathcal{R}%
L\rightarrow \mathcal{R}L^{\ast \ast }$. It is this embedding which is meant
whenever we write $\mathcal{R}L\leq \mathcal{R}L^{\ast \ast }$.

We say that an object is \emph{essentially complete} if it has no proper
essential extensions. A \emph{maximal essential extension} of $G$ is an
essential extension $G \leq H$ such that $H$ is essentially complete.

\begin{proposition}
\label{Prop:17}

\begin{enumerate}
\item A $\mathbf{W}$-object is essentially complete iff it is of the form $%
\mathcal{R}L$ for a boolean frame $L$.

\item Every $\mathbf{W}$-object $G$ has a maximal essential extension,
namely $G \leq \mathcal{R}L \leq \mathcal{R}L^{**}$.

\item Any two maximal essential extensions of $G$ are isomorphic over $G$.

\item Let $G \leq H$ be a maximal essential extension. Then an arbitrary
extension $G \leq K$ is essential iff $K$ is isomorphic over $G$ to an $\ell$%
-subgroup of $H$
\end{enumerate}
\end{proposition}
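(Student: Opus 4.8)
The plan is to reduce all four assertions to the single essential tower $G \leq \mathcal{R}L \leq \mathcal{R}L^{**}$, where $L \equiv \mathcal{M}G$. First I would assemble the building blocks. The Madden embedding $G \leq \mathcal{R}L$ induces an isomorphism on Madden frames (since $\mathcal{M}(\mathcal{R}L) \cong L$), so it drops to an isomorphism of booleanizations and is therefore essential by Lemma~\ref{Lem:9}(6); the embedding $\mathcal{R}L \leq \mathcal{R}L^{**}$ is essential by the observation following Lemma~\ref{Lem:9}; and essential extensions compose, immediately from Lemma~\ref{Lem:9}(1). Since $L^{**}$ is a complete Boolean frame, this yields one essential extension $G \leq \mathcal{R}L^{**}$ whose codomain has the advertised shape. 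I also need two tower lemmas, both read off from Lemma~\ref{Lem:9}: in any chain $G \leq M \leq N$ with $G \leq N$ essential, the upper step $M \leq N$ is essential because a nontrivial $\mathbf{W}$-kernel of $N$ meets $G \subseteq M$ nontrivially (Lemma~\ref{Lem:9}(2)), and the lower step $G \leq M$ is essential because a nontrivial convex $\ell$-subgroup $C$ of $M$ contains some $0 < c$, the convex $\ell$-subgroup of $N$ it generates meets $G$ by largeness (Lemma~\ref{Lem:9}(4)), and a witness $0 < g \leq kc$ then lies in $C \cap G$ by convexity in $M$.

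The technical heart is an embedding lemma, which I would prove first: \emph{every essential extension $G \leq K$ admits an embedding $K \hookrightarrow \mathcal{R}L^{**}$ over $G$}. Writing $M \equiv \mathcal{M}K$, factor $K$ through its own Madden tower $K \leq \mathcal{R}M \leq \mathcal{R}M^{**}$. Essentiality of $G \leq K$ forces, by Lemma~\ref{Lem:9}(6), the induced frame map $L \to M$ to drop to an isomorphism $L^{**} \cong M^{**}$, hence $\mathcal{R}L^{**} \cong \mathcal{R}M^{**}$, and composition gives $K \hookrightarrow \mathcal{R}M^{**} \cong \mathcal{R}L^{**}$. The one delicate point, which I expect to be the main obstacle, is verifying that this composite is the identity on $G$: this is precisely the commutativity of the booleanization square of Lemma~\ref{Lem:9}(6) combined with the functoriality of the Madden representation (Theorem~\ref{Thm:6}), which I would settle by chasing the relevant diagram on Madden frames rather than by any computation.

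With the embedding lemma in hand the four assertions follow quickly. For (1), the ``only if'' direction holds because $G$ essentially complete collapses the essential extension $G \leq \mathcal{R}L^{**}$ to an equality, so $G \cong \mathcal{R}L^{**}$ with $L^{**}$ Boolean; the ``if'' direction follows by applying the embedding lemma with $\mathcal{R}L$ in place of $G$, where $\mathcal{M}(\mathcal{R}L) = L$ is already Boolean so $L^{**} = L$, forcing any essential extension of $\mathcal{R}L$ to embed over $\mathcal{R}L$ into $\mathcal{R}L$ itself and hence to be trivial. Assertion (2) is then immediate: $G \leq \mathcal{R}L^{**}$ is essential, and by (1) its codomain is essentially complete, so it is a maximal essential extension. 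For (3), given two maximal essential extensions $H_1, H_2$, each embeds over $G$ into $\mathcal{R}L^{**}$ by the embedding lemma; the upper tower lemma makes each $H_i \leq \mathcal{R}L^{**}$ essential, and essential completeness of $H_i$ collapses it, so $H_1 \cong \mathcal{R}L^{**} \cong H_2$ over $G$.

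Finally, for (4) I may take $H = \mathcal{R}L^{**}$ by (2) and (3). The forward implication is exactly the embedding lemma. For the converse, an $\ell$-subgroup copy $G \leq K' \leq \mathcal{R}L^{**}$ of $K$ over $G$ sits in the tower $G \leq K' \leq \mathcal{R}L^{**}$ whose outer extension is essential, so the lower tower lemma makes $G \leq K'$, and hence $G \leq K$, essential. This completes the plan.
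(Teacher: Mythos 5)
Your proposal takes a genuinely different, more self-contained route than the paper, which disposes of part (2) by citing Proposition 2.1 of Banaschewski--Hager, of (3) and (4) by citing Conrad, and of (1) by a one-line appeal to the tower $G \leq \mathcal{R}L \leq \mathcal{R}L^{**}$. Your two tower lemmas, your embedding lemma (including the diagram chase showing the embedding is over $G$), and the reductions of (2), (3), (4) to (1) are all correct. The genuine gap is at the keystone on which everything then rests: the \emph{if} half of (1), that $\mathcal{R}B$ is essentially complete for every boolean frame $B$. You justify this by the claim that $\mathcal{M}(\mathcal{R}B) = B$, already boolean, so $B^{**} = B$. That identity is available only for regular Lindel\"{o}f frames --- it is the uniqueness clause of Theorem \ref{Thm:6}(2), properly read as uniqueness among regular Lindel\"{o}f frames, which is why your use of it to see that $G \leq \mathcal{R}L$ is essential is legitimate, since there $L = \mathcal{M}G$ --- but boolean frames are rarely Lindel\"{o}f, and the identity fails for them. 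Concretely, let $B = P(X)$ for uncountable $X$, so that $\mathcal{R}B \cong \mathbb{R}^X$. By the lemma in Subsection \ref{Subsec:2}, $\mathcal{M}(\mathbb{R}^X)$ is regular Lindel\"{o}f, whereas $P(X)$ is not Lindel\"{o}f (its cover by singletons has no countable subcover), so $\mathcal{M}(\mathbb{R}^X)$ is not isomorphic to $P(X)$. What goes wrong is visible in Theorem \ref{Thm:6}(4): the canonical comparison map $\mathcal{M}(\mathbb{R}^X) \to P(X)$ is surjective but not injective; indeed $\{f \in \mathbb{R}^X : \supp f \text{ is countable}\}$ is a $\mathbf{W}$-kernel by Lemma \ref{Lem:4} which corresponds to no subset of $X$, and $\mathcal{M}(\mathbb{R}^X)$ is in fact the frame of $\sigma$-ideals of $P(X)$. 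Consequently your embedding lemma, applied to $\mathcal{R}B$, only places an essential extension of $\mathcal{R}B$ inside $\mathcal{R}\bigl((\mathcal{M}\mathcal{R}B)^{**}\bigr)$, and you have not shown that this object equals $\mathcal{R}B$ --- which is essentially the statement being proved.

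The gap cannot be quarantined to (1): part (2) needs the codomain $\mathcal{R}L^{**}$ to be essentially complete, and $L^{**}$ need not be Lindel\"{o}f even though $L = \mathcal{M}G$ is (for $G = \mathbb{R}^X$ one has precisely $L^{**} \cong P(X)$), and (3), (4) lean on (2). To close the argument along your lines you need one further ingredient, not stated in this paper: $\mathcal{M}(\mathcal{R}M) \cong \mathcal{H}\coz M$ for an arbitrary completely regular frame $M$. Granting that, every element of a boolean frame $B$ is a cozero element (it has a characteristic function in $\mathcal{R}B$), so $\mathcal{M}(\mathcal{R}B) \cong \mathcal{H}B$; and a direct computation shows that the pseudocomplement in $\mathcal{H}B$ of a $\sigma$-ideal $I$ is the principal ideal generated by the complement of $\bigvee I$, whence $(\mathcal{H}B)^{**} \cong B$. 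With that, your embedding lemma does collapse every essential extension of $\mathcal{R}B$, and the rest of your argument goes through verbatim. The alternative is to concede this single point to the literature, as the paper itself does.
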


\begin{proof}
(1) is a consequence of the fact that $G\leq \mathcal{R}L\leq \mathcal{R}%
L^{\ast \ast }$ is an essential extension which is an isomorphism iff $G=%
\mathcal{R}L=RL^{\ast \ast }$. (2) is Proposition 2.1 of \cite%
{BanaschewskiHager:2013}. The rest is due to Conrad from his seminal article 
\cite{Conrad:1970}.
\end{proof}


\cite{BanaschewskiHager:2013} provides an analysis of maximal essential
extensions in categories related to $\mathbf{W}$. See also \cite%
{BanaschewskiHager:2013(2)} for a closely related analysis in the context of
completely regular frames.

Essential extensions take their importance here from the fact that any
extension may be \enquote*{reduced} to an essential extension by passage to
an appropriate quotient. This is Lemma \ref{Lem:10}, which involves an
attribute weaker than essentiality. Recall that an injective homomorphism $%
\tau :H\rightarrow K$ is said to be \emph{complete} if it preserves all
suprema and infima that exist in $H$. The following is folklore; see, e.g., 
\cite{Darnel:1994}.

\begin{lemma}
\label{Lem:15}An essential injection is complete.
\end{lemma}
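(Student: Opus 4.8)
The plan is to prove that an essential injection preserves existing suprema; the infimum half then follows by applying this to negatives, since $\bigwedge_H A$ is preserved iff $\bigvee_H(-A)$ is. So let $\tau\colon H\to K$ be an essential injection, identify $H$ with its image in $K$, and suppose $s=\bigvee_H A$ for some $A\subseteq H$; I must show $s=\bigvee_K A$. First I would reduce to the directed case: replacing $A$ by the set of its finite joins changes neither the supremum nor the conclusion, so I may assume $A$ is directed upward, meaning any $a_1,\dots,a_N\in A$ admit an upper bound $a^\ast\in A$. Now let $t\in K$ be an arbitrary upper bound of $A$ in $K$. Replacing $t$ by $t\wedge s$ (still an upper bound, since $a\le t$ and $a\le s$ give $a\le t\wedge s$) I may assume $t\le s$, so it suffices to prove that $d\equiv s-t\ge 0$ is in fact $0$.

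Suppose instead $d>0$. Since $\tau$ is essential, $H$ is large in $K$ by Lemma~\ref{Lem:9}(4), so the nontrivial convex $\ell$-subgroup of $K$ generated by $d$ meets $H$ nontrivially: there is some $0<p\in H$ with $p\le Nd$ for some $N\in\mathbb{N}$. For each $a\in A$ we have $s-a\ge s-t=d$ because $a\le t$, whence $p\le Nd\le N(s-a)=Ns-Na$, that is, $Na\le Ns-p$. Thus $Ns-p$ is an upper bound in $H$ of the scaled set $NA\equiv\{Na:a\in A\}$.

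The crux is to show that $\bigvee_H(NA)=Ns$, for then $Ns\le Ns-p$ forces $p\le 0$, the contradiction that finishes the proof. This scaling identity carries the real content, since largeness supplies only an element of $H$ lying beneath a multiple $Nd$ of $d$, never beneath $d$ itself, and it is exactly this gap that the scaling step closes. I would establish it from the elementary rule, valid in any $\ell$-group, that suprema add: if $\bigvee B$ and $\bigvee C$ exist then $\bigvee(B+C)=\bigvee B+\bigvee C$, because from $c'\ge b+c$ for all $b,c$ one gets $c'-c\ge\bigvee B$ for each $c$, hence $c'\ge\bigvee B+\bigvee C$. Because $A$ is directed upward, any upper bound $c$ of $NA$ dominates $Na^\ast$, where $a^\ast\in A$ is an upper bound of chosen $a_1,\dots,a_N\in A$, and $Na^\ast\ge a_1+\cdots+a_N$; so $c$ bounds the $N$-fold sumset $A+\cdots+A$, whose supremum is $Ns$ by $N$ applications of the addition rule. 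Therefore $c\ge Ns$, giving $\bigvee_H(NA)=Ns$ as required. It is worth noting that this scaling step uses only the lattice-group structure and not the archimedean hypothesis; essentiality enters the argument solely through the production of the witness $p$.
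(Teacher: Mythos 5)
Your proof is correct and follows essentially the same route as the paper's: both reduce to an upper bound $t\le s$ in the extension, use largeness (Lemma \ref{Lem:9}(4)) to produce $0<p\in H$ with $p\le N(s-t)$, and reach a contradiction via the scaling identity $\bigvee_H(NA)=Ns$. The only difference is presentational: you prove the scaling identity carefully (directedness plus the addition rule for suprema), where the paper simply asserts that $\bigvee Z=g$ implies $\bigvee_Z nz=ng$.
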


\begin{proof}
Let $G\leq H$ be an essential extension, and let $\bigvee Z=g$ for some
subset $Z\subseteq G^{+}$ and element $g\in G^{+}$. If $G$ fails to be the
supremum of $Z$ in $H$, it is only because there is some $h\in H$ such that $%
Z\leq h<g$. Now the convex $\ell $-subgroup of $H$ generated by $g-h$ is
nontrival, and by Lemma \ref{Lem:9}(4) contains some $0<g^{\prime }\in G$,
say $g^{\prime }\leq n\left( g-h\right) $ for a positive integer $n$. This
rearranges to 
\begin{equation*}
ng>ng-g^{\prime }\geq nh\geq nz\;\;\text{for all }z\in Z\text{.}
\end{equation*}%
But $\bigvee Z=g$ implies $\bigvee_{Z}nz=ng$ in $G$, and this contradicts
the displayed condition.
\end{proof}

\begin{lemma}
\label{Lem:10}For any injective homomorphism $\gamma :G\rightarrow H$ there
is a surjective homomorphism $\tau :H\rightarrow K$ such that $\tau \circ
\gamma $ is an essential injection. Moreover, $\tau $ may be chosen to be
complete.
\end{lemma}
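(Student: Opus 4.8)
The plan is to realize $K$ as a quotient of $H$ by a polar. Identifying $G$ with $\gamma[G]\leq H$, I would take $P\equiv G^{\bot}$, the polar of $G$ in $H$, and let $\tau:H\to K\equiv H/P$ be the canonical surjection. Since polars are $\mathbf{W}$-kernels (as invoked in the proof of Lemma~\ref{Lem:9}), $K$ is again a $\mathbf{W}$-object and $\tau$ is a surjective $\mathbf{W}$-morphism. Moreover $\tau\circ\gamma$ is injective, since $g\in G\cap G^{\bot}$ forces $|g|=|g|\wedge|g|=0$.

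Next I would verify that $\tau\circ\gamma$ is essential, the routine part. The key elementary observation is that $G^{\bot}$ is the \emph{largest} convex $\ell$-subgroup of $H$ disjoint from $G$: if $N$ is a convex $\ell$-subgroup with $N\cap G=0$ and some $0<n\in N$ failed to lie in $G^{\bot}$, then $0<|n|\wedge|g|\leq|n|$ for some $g\in G$, so this meet would lie in both $N$ (by convexity) and $G$, contradicting $N\cap G=0$. Since $\mathbf{W}$-kernels of $K$ correspond to $\mathbf{W}$-kernels of $H$ containing $P$, any nontrivial one is $N'/P$ with $N'\supsetneq G^{\bot}$; by the maximality just noted $N'$ is not disjoint from $G$, so $N'\cap G\neq0$ and hence $N'/P$ meets the image of $G$ nontrivially. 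By Lemma~\ref{Lem:9}(2) this is exactly the assertion that $\tau\circ\gamma$ is essential. Note that Zorn's lemma is unnecessary here, since $G^{\bot}$ is literally the maximum avoider.

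The delicate step, which I expect to be the main obstacle, is showing that $\tau$ is \emph{complete}. This rests on the fact that a polar is closed under any existing join: if $Z\subseteq(G^{\bot})^{+}$ and $z=\bigvee Z$ exists in $H$, then for each $g\in G$ the $\ell$-group identity $|g|\wedge\bigvee Z=\bigvee_{Z}(|g|\wedge z')=0$ gives $z\in G^{\bot}$. Granting this, suppose $\bigvee_{H}Z=z$ and that $\tau(y)$ is any upper bound of $\tau[Z]$ in $K$. Then $(z'-y)^{+}\in P$ for every $z'\in Z$, while $(z-y)^{+}=\bigvee_{Z}(z'-y)^{+}$ because translation and $(\cdot)\vee0$ commute with existing joins. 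Closure of $P$ under this join yields $(z-y)^{+}\in P$, i.e.\ $\tau(z)\leq\tau(y)$; thus $\tau(z)=\bigvee\tau[Z]$, and preservation of existing infima follows by applying the same argument to $-Z$. This establishes that $\tau$ may be taken complete, completing the proof.
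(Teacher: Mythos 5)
Your overall strategy --- quotient $H$ by a polar disjoint from $\gamma[G]$, deduce essentiality from maximality of that polar, and deduce completeness from the fact that polars are order closed --- is the paper's strategy, and your completeness argument is a correct expansion of the paper's one-line justification. But your choice of polar is wrong, and the failure occurs precisely in the step you call routine. You claim that $P \equiv G^{\bot}$ is the \emph{largest} convex $\ell$-subgroup of $H$ disjoint from $G$, arguing that if $N \cap G = 0$ and $0 < |n| \wedge |g|$ for some $n \in N$, $g \in G$, then this meet lies in both $N$ (by convexity) and $G$. Membership in $N$ is fine, but membership in $G$ is unjustified: $G = \gamma[G]$ is merely an $\ell$-subgroup of $H$, not a convex one, so $|n| \wedge |g| \leq |g|$ does not place $|n| \wedge |g|$ in $G$. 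The claim is true when $G$ is convex in $H$, but false in general.

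Concretely, take $H = \mathbb{R}^{2}$ with weak unit $(1,1)$ and let $\gamma$ embed $G = \mathbb{R}$ as the diagonal $\{(a,a) : a \in \mathbb{R}\}$; this is a $\mathbf{W}$-morphism. Since $(1,1) \in \gamma[G]$ is a weak unit of $H$, we get $\gamma[G]^{\bot} = 0$, so your $\tau$ is the identity and $\tau \circ \gamma = \gamma$. But $\gamma$ is not essential: $N = \mathbb{R} \times \{0\}$ is a nontrivial convex $\ell$-subgroup (indeed a polar) of $H$ meeting the diagonal trivially, violating Lemma \ref{Lem:9}(4). The paper instead applies Zorn's lemma to the family of \emph{all} polars of $H$ disjoint from $\gamma[G]$ --- the union of a chain need not be a polar, so one passes to the double polar of the union and checks it is still disjoint from $\gamma[G]$ --- and takes $R$ maximal in this family; in the example above $R$ is one of the two coordinate axes, and $H/R \cong \mathbb{R}$ is an essential (indeed isomorphic) extension of $G$. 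Your remark that \enquote*{Zorn's lemma is unnecessary} thus pinpoints exactly what goes wrong: in general there is no maximum polar avoiding $\gamma[G]$, only maximal ones, and $G^{\bot}$ need not be one of them. The remainder of your argument (the kernel correspondence for essentiality, and order-closedness of polars for completeness) can be salvaged once $G^{\bot}$ is replaced by such a maximal $R$.
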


\begin{proof}
We claim that the family $\mathcal{Q}$ of polars $Q$ such that $Q\cap \gamma
\lbrack G]=0$ contains maximal elements. For if $\mathcal{C}$ is a nonempty
chain in $\mathcal{Q}$ then, since $\gamma \lbrack G]\cap \bigcup \mathcal{C}%
=0$ and $\bigcup \mathcal{C}$ is convex, $\gamma \lbrack G]\subseteq
(\bigcup \mathcal{C})^{\bot }$, hence $\gamma \lbrack G]\cap (\bigcup 
\mathcal{C})^{\bot \bot }=0$ and so $(\bigcup \mathcal{C})^{\bot \bot }\in 
\mathcal{Q}$. If we take $\tau $ to be the quotient map $H\rightarrow H/R$
for some polar $R$ maximal in $\mathcal{Q}$ then it is clear that part (3)
of Lemma \ref{Lem:9} is satisfied by $\tau \circ \gamma $. And $\tau $ is
complete because $Q$ is order closed.
\end{proof}

\begin{lemma}
\label{Lem:14}If a subset $Z\subseteq G$ has a supremum in some extension
then it has a supremum in some essential extension.
\end{lemma}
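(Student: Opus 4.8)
The plan is to begin with an extension $G \leq H$ in which the supremum exists, say $\bigvee Z = h$ in $H$, and then to \enquote*{reduce} this extension to an essential one by means of Lemma \ref{Lem:10}, taking care to preserve the supremum along the way.

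First I would let $\gamma : G \to H$ denote the inclusion and invoke Lemma \ref{Lem:10} to produce a surjective homomorphism $\tau : H \to K$ with the property that $\tau \circ \gamma : G \to K$ is an essential injection. The crucial feature, supplied by the \enquote*{moreover} clause of that lemma, is that $\tau$ may be chosen to be \emph{complete}, i.e., to preserve every supremum and infimum that exists in $H$. Since $\tau \circ \gamma$ is in particular injective, we may regard $G$ as a subobject of $K$ via this essential embedding.

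Next, because $\bigvee Z = h$ holds in $H$ and $\tau$ is complete, I would conclude that $\bigvee \tau[Z] = \tau(h)$ in $K$. Identifying $G$ with its image under the essential injection $\tau \circ \gamma$, the image of the subset $Z$ is exactly $\tau[Z]$, since $\gamma$ is the inclusion. Thus $Z$ acquires the supremum $\tau(h)$ in the essential extension $K$, which is precisely what is required.

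I do not anticipate a serious obstacle here; the argument is short once the right tools are assembled. The only points meriting care are (i) confirming that $\tau \circ \gamma$ is genuinely injective, so that $G$ embeds as a subobject of $K$ — this is guaranteed since it is an essential injection — and (ii) using the completeness clause of Lemma \ref{Lem:10} for $\tau$ itself rather than appealing to Lemma \ref{Lem:15}, which concerns the essential injection $\tau \circ \gamma$ out of $G$ and would not by itself tell us that the supremum computed in $H$ descends correctly to $K$.
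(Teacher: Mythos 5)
Your proof is correct and follows essentially the same route as the paper's: both apply Lemma \ref{Lem:10} to the inclusion $G \leq H$ to obtain a complete surjection $\tau : H \to K$ with $\tau$ restricting to an essential injection on $G$, then use the completeness of $\tau$ (not Lemma \ref{Lem:15}) to transport the supremum $\bigvee Z = h$ to $\bigvee \tau[Z] = \tau(h)$ in $K$. Your explicit care about point (ii) — that it is the completeness of $\tau$ itself that is needed — is exactly the mechanism the paper relies on.
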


\begin{proof}
Let $G\leq H\,$be an extension such that $\bigvee Z=h$ in $H$, and let $\tau
:H\rightarrow K$ be the complete surjection of Lemma \ref{Lem:10} such that $%
\tau $ is one-one on $Z$ and $K$ is an essential extension of $\tau \left[ Z%
\right] $. Then $\bigvee \tau \left[ Z\right] =\tau \left( h\right) $ in $K$
since $\tau $ is complete. Identifying $Z$ with its image under $\tau $
provides the desired extension.
\end{proof}


\subsection{Mobile downsets and cuts\label{Subsec:8}}

Many completion results are based on the technique of adjoining to $G$ a
supremum for each downset of a particular type. The downsets in play,
usually called \emph{cuts}, depend on the sort of completeness desired. The
broadest notion of a cut was introduced in Section 4 of \cite{Ball:1980}.

\begin{definition}
\label{Def:7}A downset $Z \subseteq G$ is called a \emph{cut} if it has a
supremum in some extension of $G$.
\end{definition}

Observe that by Lemma \ref{Lem:14}, a downset $Z \subseteq G^+$ is a cut iff
it has a supremum in some essential extension of $G$.

Definition \ref{Def:7} is sufficiently opaque as to appear useless, but its
utility is restored by Theorem \ref{Thm:5}, which gives a working criterion
for a subset $Z \subseteq G^+$ to be a cut. That criterion involves the
inability of the subset to remain stationary under addition by a positive
element.


%
%
%
%

\begin{definition}
A downset $Z\subseteq G$ is said to be \emph{mobile} if $Z+g\nsubseteq Z$
for all $0<g\in G$.
\end{definition}

\begin{observations}
\label{Obs:1} Let $Z$ be a downset in $G$.

\begin{enumerate}
\item $Z$ is mobile iff there is no $0 < g \in G$ for which $Z + G(g)
\subseteq Z$, where $G(g)$ designates the convex $\ell$-subgroup of $G$
generated by $g$.

\item $Z$ is mobile iff it is not the union of cosets of some nontrivial
convex $\ell$-subgroup of $G$.

\end{enumerate}
\end{observations}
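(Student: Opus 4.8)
The plan is to prove both assertions by establishing, for a single fixed $0 < g \in G$, the equivalence $Z + g \subseteq Z \iff Z + G(g) \subseteq Z$; both statements then follow by quantifying over $g$ and translating the second into the language of cosets.

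First I would prove part (1). The implication $Z + G(g) \subseteq Z \Rightarrow Z + g \subseteq Z$ is immediate, since $g \in G(g)$. For the converse, suppose $Z + g \subseteq Z$. Iterating gives $Z + ng \subseteq Z$ for every $n \in \mathbb{N}$. Now recall that, because $g > 0$, the convex $\ell$-subgroup $G(g)$ consists of exactly those $h \in G$ with $|h| \leq ng$ for some $n$. Given any such $h$ and any $z \in Z$, we have $z + h \leq z + |h| \leq z + ng$, and $z + ng \in Z$ by the iterated inclusion; since $Z$ is a downset this forces $z + h \in Z$. Hence $Z + G(g) \subseteq Z$, completing part (1) with the same witness $g$.

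For part (2), I would first observe that a downset $Z$ is a union of cosets of a convex $\ell$-subgroup $K$ precisely when $Z + K = Z$, and that the latter is equivalent to $Z + K \subseteq Z$, the reverse inclusion being automatic from $0 \in K$. If $Z$ is not mobile then part (1) supplies some $0 < g$ with $Z + G(g) \subseteq Z$; taking $K = G(g)$, which is nontrivial since $g > 0$, exhibits $Z$ as a union of cosets of a nontrivial convex $\ell$-subgroup. Conversely, if $Z + K = Z$ for some nontrivial convex $\ell$-subgroup $K$, choose $0 < g \in K$ (available because $K$ contains $|h| > 0$ for any nonzero $h \in K$); then $Z + g \subseteq Z + K = Z$, so $Z$ fails to be mobile.

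No serious obstacle is anticipated: the entire argument rests on the downset hypothesis together with the standard description of $G(g)$ for $g > 0$. The only point demanding a little care is the forward direction of part (1), where one must use both the iterated inclusion $Z + ng \subseteq Z$ and the downward closure of $Z$ simultaneously; everything else is a routine unwinding of definitions.
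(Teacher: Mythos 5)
Your proof is correct and follows essentially the same route as the paper, whose entire proof is the one-line observation that $Z+g\subseteq Z$ iterates to $Z+ng\subseteq Z$ for all $n$, whence $Z+k\subseteq Z$ for every $k$ with $|k|\leq ng$ — exactly your core argument combining iteration, the standard description of $G(g)$, and downward closure of $Z$. You additionally spell out the coset translation in part (2), which the paper leaves implicit; this is a fine (and welcome) elaboration rather than a different approach.
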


\begin{proof}
If $Z + g \subseteq Z$ then $Z + ng \subseteq Z$ for all $n$, hence $Z + k
\subseteq Z$ for all $k$ such that $|k| \leq ng$ for some $n$.
\end{proof}

The next proposition hints at why mobile downsets are relevant to our
investigation, for it shows that two types of subsets which may have
pointwise joins are mobile

\begin{proposition}
\label{Lem:16}

\begin{enumerate}
\item The downset $\left\downarrow g_{0}\wedge n:n\in \mathbb{N}%
\right\downarrow$ generated by the truncates of an element $g_{0}\in G^{+}$ is
mobile.

\item A nonempty bounded downset is mobile.
\end{enumerate}
\end{proposition}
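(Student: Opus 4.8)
The plan is to prove both parts by directly negating mobility: a downset $Z$ fails to be mobile exactly when there is some $0 < g \in G$ with $Z + g \subseteq Z$. So in each case I will assume such a $g$ exists and derive a contradiction. The key tool throughout is the archimedean property of $G$ (built into $\mathbf{W}$), which is what prevents a downset from absorbing arbitrarily large multiples of a fixed positive element while remaining bounded above by something in the relevant structure.

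For part (2), the bounded case is the cleaner of the two, so I would dispatch it first. Suppose $Z$ is a nonempty bounded downset with upper bound $u \in G$, and suppose toward a contradiction that $Z + g \subseteq Z$ for some $0 < g \in G$. Fixing any $z_0 \in Z$, an easy induction gives $z_0 + ng \in Z$ for all $n \in \mathbb{N}$, whence $z_0 + ng \leq u$, i.e. $ng \leq u - z_0$ for all $n$. This contradicts the archimedean property of $G$, since $g > 0$. Thus no such $g$ exists and $Z$ is mobile.

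For part (1), write $Z \equiv \downset{g_0 \wedge n : n \in \mathbb{N}}$, the downset generated by the truncates of $g_0 \in G^+$, and again suppose $Z + g \subseteq Z$ for some $0 < g \in G$. The natural move is to locate an element of $Z$ that is essentially maximal in the relevant direction and then show that adding $g$ pushes it out. Concretely, $g_0 \wedge n \in Z$ for every $n$, so $g_0 \wedge n + g \in Z$ as well, which forces $g_0 \wedge n + g \leq g_0 \wedge m$ for some $m$; since $g_0 \wedge n + g \leq g_0$ would follow from $g \leq g_0 - (g_0 \wedge n)$, one sees that $g$ must be dominated by $g_0 - (g_0 \wedge n)$ for suitable indices. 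The point to exploit is that $g_0 - (g_0 \wedge n) = (g_0 - n)^+$ decreases to $0$ in the archimedean sense (its truncates shrink), so $g$ would have to be smaller than arbitrarily small positive elements built from $g_0$, contradicting $g > 0$ together with archimedeanness. The main obstacle is handling this last step cleanly: unlike part (2), here $Z$ is unbounded, so I cannot simply bound $ng$ above by a single element. Instead I expect to argue that $Z + g \subseteq Z$ forces every element $z + g$ (with $z = g_0 \wedge n$) back below some truncate, and by comparing the \enquote*{heights} $z(r,\infty)$ in the Madden representation — or equivalently by a direct $\ell$-group computation showing $(z + g) \wedge g_0 = z + g$ is impossible for large $n$ — reach the contradiction with the archimedean law.

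I would structure the writeup so that part (2) is proved in two or three lines as above, and part (1) follows the same negation-of-mobility template but with the archimedean estimate applied to the truncates rather than to a global bound. The Observations \ref{Obs:1} characterization (mobility fails iff $Z + G(g) \subseteq Z$ for some $0 < g$) can be invoked to streamline the bookkeeping, since it lets me pass freely from $g$ to all of $G(g)$ and reduce to comparing multiples of a single generator.
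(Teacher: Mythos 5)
Your part (2) is correct and is precisely the paper's argument: translate so that $0 \in Z$ (equivalently, subtract $z_0$), iterate $Z + g \subseteq Z$ to get $ng \leq u - z_0$ for all $n$, and contradict archimedeanness.

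Part (1), however, has a genuine gap, and it originates in a false structural premise. You assert that ``here $Z$ is unbounded, so I cannot simply bound $ng$ above by a single element.'' In fact $Z = \downset{g_{0}\wedge n : n\in\mathbb{N}}$ \emph{is} bounded above, by $g_0$ itself: every truncate satisfies $g_0 \wedge n \leq g_0$, hence so does every element of the downset they generate. Thus part (1) is literally a special case of part (2), and your own two-line proof of (2) finishes it; this is in effect what the paper does (by archimedeanness choose $m$ with $mg \nleq g_0$; then $mg \nleq g_0 \wedge n$ for every $n$, so $mg \notin Z$, while $0 \in Z$ and iteration of $Z + g \subseteq Z$ put $mg$ into $Z$). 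Your alternative route --- from $g_0 \wedge n + g \in Z$ deduce $g \leq g_0 - (g_0 \wedge n) = (g_0 - n)^+$ for all $n$, then contradict $g > 0$ --- is never completed, and the step you defer (``decreases to $0$ in the archimedean sense'') is exactly the nontrivial point: the claim $\bigwedge_n (g_0 - n)^+ = 0$ is \emph{not} a consequence of archimedeanness alone. It genuinely requires the weak unit: in an archimedean $\ell$-group in which $1$ fails to be a weak unit, any $g_0 > 0$ disjoint from $1$ satisfies $(g_0 - n)^+ = g_0$ for all $n$. To close your version of the argument you would need either an appeal to Proposition \ref{Prop:3} (which gives $\bigvee_n (g_0 \wedge n) = g_0$, so that $g_0 \wedge n \leq g_0 - g$ for all $n$ forces $g \leq 0$), or an explicit computation such as the identity $\sum_{j=0}^{m-1}\bigl((x - j)^+ \wedge 1\bigr) = x^+ \wedge m$, valid in $\mathbb{R}$ and hence in $G$ via Corollary \ref{Cor:2} and the Madden embedding, which yields $m(g \wedge 1) \leq g_0 \wedge m \leq g_0$ for all $m$, whence $g \wedge 1 = 0$ by archimedeanness and then $g = 0$ because $1$ is a weak unit. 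As written, the proposal for (1) leaves its crucial step unproved, and the detour was unnecessary in the first place.
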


\begin{proof}
(2) Suppose the downset $\emptyset \neq Z\subseteq G$ is bounded above by $%
g_{0}$, and suppose for the sake of argument that $Z+g\subseteq Z$ for some $%
0<g\in G$. We may assume that $0\in Z$, for we may always replace $Z$ by $%
Z-z_{0}$ and $g_{0}$ by $g_{0}-z_{0}$, where $z_{0}$ is any member of $Z$.
But then $ng\in Z$ for all positive integers $n$, with the result that $%
ng\leq g_{0}$ for all $n$, a violation of the archimedean property of $G$.

(1) Let $Z\equiv \left\downarrow g_{0}\wedge n:n\in \mathbb{N}%
\right\downarrow $ be the set of lower bounds of the truncates of $g_{0}\in
G^{+}$, and suppose for the sake of argument that $Z+g\subseteq Z$ for some $%
0<g\in G$. Then by the archimedean property there is a positive integer $m$
such that $mg\nleq g_{0}$. It follows that $mg\nleq n\wedge g_{0}$ for any
positive integer $n$, which is to say that $mg \notin Z$, contrary to
Observation \ref{Obs:1}(1).
\end{proof}

\begin{theorem}
\label{Thm:5}The following are equivalent for a downset $Z\subseteq G$.

\begin{enumerate}
\item $Z$ is a cut in $G$.

\item $G$ has an essential extension $H$ containing an element $h$ such that 
$\bigvee Z=h$ in $H$.

\item $G$ is completely embedded in an extension $H$ containing an element $%
h $ such that $\bigvee Z=h$ in $H$.

\item $Z$ is not a union of cosets of a nontrivial convex $\ell$-subgroup of 
$G$.

\item $Z$ is mobile.
\end{enumerate}
\end{theorem}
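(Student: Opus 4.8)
The plan is to establish the cycle $(1)\Rightarrow(2)\Rightarrow(3)\Rightarrow(1)$, to dispatch $(1)\Rightarrow(5)$ and $(4)\Leftrightarrow(5)$ as short observations, and to concentrate essentially all of the work on the one substantive implication $(5)\Rightarrow(1)$.

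For the routine portion: $(1)\Rightarrow(2)$ is exactly Lemma~\ref{Lem:14}, which upgrades a supremum in an arbitrary extension to a supremum in an essential one. Then $(2)\Rightarrow(3)$ follows from Lemma~\ref{Lem:15}, since an essential injection is complete; the same witness $H$ therefore serves for (3). And $(3)\Rightarrow(1)$ is immediate, a complete embedding being in particular an extension. The equivalence $(4)\Leftrightarrow(5)$ is nothing more than Observation~\ref{Obs:1}(2). Finally $(1)\Rightarrow(5)$ is the short translation argument: were $\bigvee Z=h$ in an extension $H$ while $Z+g\subseteq Z$ for some $0<g\in G$, then each $z+g$ would lie in $Z$, whence $z\le h-g$ for all $z\in Z$; thus $h-g$ would be an upper bound of $Z$, forcing $h\le h-g$ and the absurdity $g\le 0$.

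Now for $(5)\Rightarrow(1)$, where mobility must be turned into the existence of a supremum in \emph{some} extension. I would look for that supremum inside the maximal essential extension $G\le\mathcal{R}L\le\mathcal{R}L^{\ast\ast}$ furnished by Proposition~\ref{Prop:17}, with $L\equiv\mathcal{M}G$. The only plausible candidate is the pointwise join, so I define a function on right rays by $\rho(r)\equiv\bigvee_{z\in Z}\widehat{z}(r,\infty)$, the join taken in the complete boolean frame $L^{\ast\ast}$, and try to show $\rho$ extends to a frame map $h\in\mathcal{R}L^{\ast\ast}$. If it does, then $h$ dominates each $\widehat{z}$ ray by ray and is dominated by every upper bound of $Z$, so $h=\bigvee Z$ in the extension $\mathcal{R}L^{\ast\ast}$ and $Z$ is a cut. (That this pointwise candidate is the correct one is reassured by Theorem~\ref{Thm:7}: since $L^{\ast\ast}$ is boolean, every supremum that exists in $\mathcal{R}L^{\ast\ast}$ is already pointwise.) Verifying the hypotheses of Lemma~\ref{Lem:1} for $\rho$ is routine except for one clause: the rather-below relation is automatic in a boolean frame, while the continuity identity $\rho(r)=\bigvee_{s>r}\rho(s)$ and the density $\bigvee_r\rho(r)=\top$ come from interchanging joins and using $Z\neq\emptyset$. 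The lone genuine requirement is $\bigvee_r\rho(r)^{\ast}=\top$, equivalently $\bigwedge_r\rho(r)=\bot$ in $L^{\ast\ast}$, which asserts that the prospective supremum takes no infinite value on any nonzero part of $L^{\ast\ast}$.

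The crux, and the step I expect to fight with, is the deduction of $\bigwedge_r\rho(r)=\bot$ from mobility. I would argue contrapositively. Suppose $d\equiv\bigwedge_r\rho(r)>\bot$ in $L^{\ast\ast}$. Under the bijection between polars of $G$ and elements of the booleanization recorded in the proof of Lemma~\ref{Lem:9}, $d$ names a nontrivial polar $P=\{x\in G:\coz|x|\le d\}$, and the inequalities $d\le\rho(r)=\bigvee_{z\in Z}\widehat{z}(r,\infty)$, holding for every $r$, say precisely that $Z$ is cofinally unbounded over the region $d$. The delicate task is to convert this unboundedness into a nontrivial convex subgroup $C$ under which $Z$ is saturated ($Z+C\subseteq Z$), i.e. a witness to the non-mobility of $Z$ in the sense of Observation~\ref{Obs:1}(2); this is exactly the point at which the structure theory of cuts developed in \cite{Ball:1980} is brought to bear. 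Since the reverse dependence---that mobility is forced by the existence of the supremum---is already secured by $(1)\Rightarrow(5)$, establishing this forward extraction closes the equivalence $\bigwedge_r\rho(r)=\bot\Leftrightarrow Z\text{ mobile}$ and thereby completes $(5)\Rightarrow(1)$ and the theorem.
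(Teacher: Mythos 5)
Your handling of the routine implications coincides exactly with the paper's: it too gets $(1)\Leftrightarrow(2)$ from Lemma~\ref{Lem:14}, $(2)\Rightarrow(3)$ from Lemma~\ref{Lem:15}, $(3)\Rightarrow(1)$ as trivial, and $(4)\Leftrightarrow(5)$ from Observation~\ref{Obs:1}(2); your short translation argument for $(1)\Rightarrow(5)$ is also correct. The divergence is at the one implication carrying all the weight, $(5)\Rightarrow(1)$, and there your proposal has a genuine gap. After correctly reducing the problem to showing that mobility forces $\bigwedge_r\rho(r)=\bot$ in $L^{\ast\ast}$, you do not prove this: you label it ``the delicate task'' and defer it to unspecified structure theory in \cite{Ball:1980}. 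The paper does not prove this implication either, but it cites a precisely identified result, Proposition 4.5 of \cite{Ball:1989}, which asserts exactly the equivalence of (3) and (5); an appeal to a specific published proposition is an admissible step, whereas a gesture at a body of theory, in the hope that it contains the extraction you need, is not a proof.

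Worse, the extraction you propose appears to be obstructed, not merely unfinished. You want: if $d\equiv\bigwedge_r\rho(r)>\bot$ then some nontrivial convex $\ell$-subgroup $C$ satisfies $Z+C\subseteq Z$. Take $G=\mathcal{C}[0,1]$, so that $L=\mathcal{O}[0,1]$ and $L^{\ast\ast}$ is the regular-open algebra; enumerate the rationals of $[0,1]$ as $\{q_j\}$, let $f_{n,j}$ be the tent function of height $n$ peaking at $q_j$ and vanishing off $(q_j-1/n,\,q_j+1/n)$, and let $Z$ be the downset generated by all the $f_{n,j}$. For every $r$ the union $\bigcup_{n>r,\,j}\{f_{n,j}>r\}$ contains every rational and hence is dense, so $\rho(r)=\top$ for all $r$ and $d=\top$. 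Yet $Z$ is mobile: given $0<g$, let $\delta>0$ be the diameter of $\coz g$ and pick $n>2/\delta$; if $f_{n,j}+g\le f_{m,i}$ then $\coz g\subseteq\coz f_{m,i}$ forces $2/m\ge\delta$, i.e.\ $m\le 2/\delta<n$, while comparison at $q_j$ gives $f_{m,i}(q_j)\ge n$ and hence $m\ge n$, a contradiction. So mobility does not imply $\bigwedge_r\rho(r)=\bot$, and no stabilizing convex subgroup can be extracted from $d>\bot$. The moral is that a witnessing extension for a mobile downset cannot, in general, be found among the essential (hence archimedean, pointwise) extensions where you look for it; the cited machinery of Ball constructs cut completions in the category of all $\ell$-groups, where the extension need be neither archimedean nor unital, and that is an essentially different device. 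Your plan for $(5)\Rightarrow(1)$ therefore fails at its crux, and the example indicates it cannot be repaired along the lines you sketch.
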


\begin{proof}
The equivalence of (1) and (2) is Lemma \ref{Lem:14}, the implication from
(2) to (3) is Lemma \ref{Lem:15}, and the implication from (3) to (1) is
trivial. The equivalence of (3) and (5) is Proposition 4.5 of \cite%
{Ball:1989}, and the equivalence of (4) and (5) is Observation \ref{Obs:1}%
(2) .
\end{proof}

Our main Theorem \ref{Thm:4} requires a technical lemma.

\begin{lemma}\label{Lem:2}
Let $G=\mathcal{R}L$ for a $P$-frame $L$, and let $G\leq H$ be
its maximal essential extension (Proposition \ref{Prop:17}). If all of the
truncates of an element $h \in H^+$ lie in $G$ then $h$ lies in $G$
\end{lemma}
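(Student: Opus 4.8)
The plan is to work in the Madden representation, writing $G = \mathcal{R}L$ for the given $P$-frame $L$ and letting $M \equiv \mathcal{M}H$ be the Madden frame of the maximal essential extension. By Lemma~\ref{Lem:9}(6) the realizing frame map $f : L \to M$ drops to an isomorphism of the booleanizations $L^{**} \cong M^{**}$, and by Proposition~\ref{Prop:17}(2) we may take $H = \mathcal{R}L^{**}$ with the embedding induced by the booleanization $b_L : L \to L^{**}$. So the task reduces to the following concrete statement: if $h \in \mathcal{R}^+L^{**}$ has the property that every truncate $h \wedge n$ already lies in $\mathcal{R}L$ (viewed inside $\mathcal{R}L^{**}$), then $h$ itself lies in $\mathcal{R}L$.

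First I would record, via Corollary~\ref{Cor:1}(4), that the truncate $h \wedge n$ agrees with $h$ on the right ray $(r,\infty)$ whenever $r < n$, so that $(h \wedge n)(r,\infty) = h(r,\infty)$ for $r < n$. Since each $h\wedge n$ is assumed to lie in $\mathcal{R}L$, this says that for every real $r$ the element $h(r,\infty) \in L^{**}$ is in fact the image under $b_L$ of an element of $L$ --- namely the value $(h\wedge n)(r,\infty)$ for any $n > r$, computed in $L$. The plan is then to define a candidate frame map $g : \mathcal{O}\mathbb{R} \to L$ by specifying it on the right rays through $g(r,\infty) \equiv (h \wedge n)(r,\infty) \in L$ for $n > r$, check this is independent of the choice of $n$ using property~(1) of the truncate sequence (Proposition~\ref{Prop:12}), and verify that $g$ satisfies the three conditions of Lemma~\ref{Lem:1} so that it extends to a genuine element of $\mathcal{R}L$. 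Conditions~(1) and~(2) of Lemma~\ref{Lem:1} transfer directly from the corresponding properties of the truncates, since they involve only finitely many rays at a time and hence reduce to the same statements about $h\wedge n$ in $L$ for large $n$.

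The hard part will be the third condition of Lemma~\ref{Lem:1}, specifically the requirement $\bigvee_r g(r,\infty)^* = \top$ in $L$, because this is a genuinely infinitary statement that does not localize to a single truncate; the joins in question are computed in $L$, whereas a priori we only control the behavior in $L^{**}$. Here is where the $P$-frame hypothesis does the real work. In a $P$-frame every cozero element is complemented, so each $g(r,\infty) = \coz(h - r)^+$ is complemented in $L$, whence $g(r,\infty)^* = g(r,\infty)^{**}$ is itself a cozero-complement and $b_L$ acts as an isomorphism on the sublattice these elements generate. This lets me transport the identity $\bigvee_r h(r,\infty)^* = \top$, which holds in $L^{**}$ because $h$ is a bona fide element of $\mathcal{R}L^{**}$, back down to the identity $\bigvee_r g(r,\infty)^* = \top$ in $L$, using that $b_L$ preserves the top and that the relevant join of complemented elements is preserved under the booleanization isomorphism on $L^{**}$. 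Once Lemma~\ref{Lem:1} is verified, $g$ extends to an element of $\mathcal{R}L$, and I would finish by checking that $\widehat{g} = h$ --- equivalently that $g$ and $h$ agree on all right rays after applying $b_L$ --- which is immediate from the construction since $b_L(g(r,\infty)) = h(r,\infty)$ for every $r$, so by Lemma~\ref{Lem:3} the images coincide and $h$ is the image of $g$, placing $h$ in $G$.
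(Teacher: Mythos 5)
Your strategy is essentially the paper's own, transposed from left rays to right rays: identify $H$ with $\mathcal{R}L^{**}$ via $\mathcal{R}b_L$, define a candidate $g$ on right rays from the truncates $g_n \in \mathcal{R}L$, and check the conditions of Lemma~\ref{Lem:1}. The routine parts of your outline are fine, and you correctly isolate the crux as the single infinitary condition $\bigvee_r g(r,\infty)^* = \top$ in $L$; via Lemma~\ref{Lem:3}(1) this amounts to condition (2) of Proposition~\ref{Prop:12}, $\bigvee_n g_n(-\infty,n) = \top$, which is exactly the condition the paper's proof verifies.

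But your justification of that crux has a genuine gap. What you actually know, after applying the frame map $b_L$, is that $c^{**} = \top$, where $c \equiv \bigvee_r g(r,\infty)^*$; that is, $c$ is dense in $L$. To conclude $c = \top$ you must know that $c$ itself is fixed by double pseudocomplementation, and your stated reason --- that $b_L$ restricts to an isomorphism of the sublattice generated by the complemented elements $g(r,\infty)^*$ --- cannot deliver this: a sublattice contains only finite joins, and $c$ is an infinite join. In a general completely regular frame, booleanization is badly non-injective on infinite joins of complemented elements; in $\mathcal{O}(2^\omega)$, for instance, a countable union of clopen sets can be dense without being the whole space. The missing ingredient, which is exactly what the paper's proof supplies, is that $c$ is a \emph{countable join of cozero elements}: by monotonicity the join over $r \in \mathbb{R}$ reduces to one over the integers, indeed $c = \bigvee_n g_n(-\infty,n)$ with each $g_n(-\infty,n) = \coz(n - g_n)^+$ a cozero of $L$, and since $\coz L \to L$ is a $\sigma$-frame homomorphism, $c$ is itself a cozero element. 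Only now does the $P$-frame hypothesis finish the job, applied a second time to $c$ rather than to the individual rays: $c$ is complemented, hence $c = c^{**} = \top$. (A small slip along the way: you write $g(r,\infty)^* = g(r,\infty)^{**}$, which is impossible in a nontrivial frame; you mean $g(r,\infty) = g(r,\infty)^{**}$.)
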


\begin{proof}
Here $G=\mathcal{R}L\leq \mathcal{R}L^{\ast \ast }=H$, where the embedding $%
\mathcal{R}L\rightarrow \mathcal{R}L^{\ast \ast }$ is provided by $\mathcal{R%
}b_{L}=\left( g\longmapsto b_{L}\circ g\right) $. Suppose $\left\{
g_{n}\right\} \subseteq G^{+}$. The condition that $g_{n+1}\wedge n=g_{n}$
for all $n$ clearly holds in $H$ iff it holds in $G$, and the condition that 
$\bigvee_{n}g_{n}\left( -\infty ,n\right) =\top $ implies that 
\begin{equation*}
\top =b_{L}\left( \top \right) =b_{L}\left( \bigvee_{n}g\left( -\infty
,n\right) \right) =\bigvee_{n}b_{L}\circ g\left( -\infty ,n\right)
\end{equation*}%
because $b_{L}$ is a frame morphism. We must demonstrate the converse, i.e.,
that the displayed condition implies that that $\bigvee_{n}g_{n}\left(
-\infty ,n\right) =\top $.

So assume that $\bigvee_{n}b_{L}\circ g_{n}\left( -\infty ,n\right) =\top $
holds in $L^{\ast \ast }$, i.e., that $\left( \bigvee_{n}g_{n}\left( -\infty
,n\right) ^{\ast \ast }\right) ^{\ast \ast }=\top $ holds in $L$. Note that $%
g_{n}\left( -\infty ,n\right) $, being a cozero element of a $P$-frame, is
complemented, i.e., $g_{n}\left( -\infty ,n\right) ^{\ast \ast }=g_{n}\left(
-\infty ,n\right) $. Also note that, since the inclusion $\coz L\rightarrow L
$ is a $\sigma $-frame homomorphism, the supremum $\bigvee_{n}g_{n}\left(
-\infty ,n\right) $ in $L$ agrees with its supremum in $\coz L$. But the
latter is a cozero and is therefore complemented, so that we get $\left(
\bigvee_{n}g_{n}\left( -\infty ,n\right) \right) ^{\ast \ast
}=\bigvee_{n}g_{n}\left( -\infty ,n\right) $.
\end{proof}

The hypotheses of the preceding lemma are more generous than necessary, so 
we digress briefly to tighten it up in the light of 
the analysis conducted by the second author in \cite{Hager:2013}.  
We refer the interested reader to that article for terminology and notation 
otherwise undefined here, and omit the details of proof.

\begin{theorem}\label{Thm:9}
The following are equivalent for a $\mathbf{W}$-object $G$
with maximal essential extension $G \leq H$.
\begin{enumerate}
\item
Every element $h \in H^+$ which has all its truncates in $G$ must 
lie in $G$.

\item
Every element $h \in D^+(YG)$ with all its truncates in $G$ must lie in $G$.

\item
$G$ is *-maximum, i.e., $G$ contains a copy of every $\mathbf{W}$-object
with the same bounded part as $G$.

\end{enumerate}
\end{theorem}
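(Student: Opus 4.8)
The plan is to treat condition (3) as the hub and to establish $(3)\Leftrightarrow(2)$ and $(3)\Leftrightarrow(1)$ separately, the first being essentially a reformulation in the classical Yosida picture and the second the genuine bridge between that picture and the pointfree maximal essential extension. Throughout I would work inside the classical container $D(YG)$, the $\ell$-lattice of continuous extended-real-valued functions on the Yosida space $YG$ that are finite on a dense subset, recalling from \cite{Hager:2013} its universal property: a $\mathbf{W}$-object has the same bounded part as $G$ precisely when it can be realized as a $\mathbf{W}$-subobject of $D(YG)$ containing $G$. Under this identification, the $*$-maximality of $G$ says exactly that $G$ is the \emph{largest} such subobject.

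First I would dispatch $(3)\Leftrightarrow(2)$ by unwinding these definitions. The key observation is that truncates are bounded: if $h\in D^+(YG)$ has all its truncates $h\wedge n$ in $G$, then the subobject $K\equiv G\langle h\rangle$ generated by $G$ and $h$ has bounded part $K^*=G^*$, so $K$ is one of the objects with the same bounded part as $G$. Conversely, any such $K\leq D(YG)$ is generated over $G$ by elements of exactly this form, since every $k\in K^+$ has all its truncates in $K^*=G^*\subseteq G$. Hence the assertion of (2), that every $h$ of this kind already lies in $G$, is equivalent to saying that no proper same-bounded-part extension of $G$ exists inside $D(YG)$, which is (3).

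Next I would treat $(3)\Leftrightarrow(1)$, equivalently $(1)\Leftrightarrow(2)$, which is the substance of the theorem: it replaces the classical container $D(YG)$ by the pointfree maximal essential extension $H=\mathcal{R}L^{\ast\ast}$ of Proposition \ref{Prop:17}, an object living over a different (extremally disconnected) Yosida space. The bridge is again the generated object. Given $h\in H^+$ with all truncates in $G$, the subobject $K\equiv G\langle h\rangle\leq H$ has $K^*=G^*$; meeting any $0<k\in K$ with the unit shows that $G$ is order dense, hence large, in $K$, so $G\leq K$ is essential by Lemma \ref{Lem:9}. Thus $K$ embeds canonically over $G$ into the maximal essential extension $H$ and, being a same-bounded-part object, also into $D(YG)$; this yields a $G$-fixing correspondence between the elements of $H^+$ with truncates in $G$ and those of $D^+(YG)$ with truncates in $G$, under which membership in $G$ is preserved. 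Running the same argument starting from $D^+(YG)$ supplies the reverse direction, and $(1)\Leftrightarrow(2)$ follows.

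The hard part will be the lemma underlying every step, namely that adjoining to $G$ an element $h$ whose truncates already lie in $G$ introduces no new bounded elements, i.e. $(G\langle h\rangle)^*=G^*$. This is the one point that is not formal: it requires rewriting an arbitrary bounded $\ell$-word in $G\cup\{h\}$ in terms of $G$ and the truncates $h\wedge n$, and it is exactly the calculation carried out by the second author in \cite{Hager:2013}, so I would invoke it rather than reproduce it. Once it is in hand, Propositions \ref{Prop:17} and \ref{Prop:3}, Lemma \ref{Lem:9}, and the universal property of $D(YG)$ assemble the three equivalences with no further difficulty; the promised link to quasi-$F$ spaces and to expanding sequences then enters only as commentary relating this characterization to Hager's.
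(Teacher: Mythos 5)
First, a structural point: the paper does not actually prove Theorem \ref{Thm:9} --- it explicitly omits the details and defers to \cite{Hager:2013} --- so your proposal can only be judged against the route that citation supplies. Judged that way, it has a fatal gap: the lemma you identify as ``underlying every step,'' namely that $(G\langle h\rangle)^{*}=G^{*}$ whenever all truncates of $h$ lie in $G$, is false, and it is not the calculation carried out in \cite{Hager:2013}. Counterexample: let $B$ be the complete Boolean algebra of regular open subsets of $(0,1]$, let $H\equiv\mathcal{R}B$, and let $G\leq H$ be the $\mathbf{W}$-subobject generated by the bounded piecewise linear functions on $[0,1]$ (finitely many pieces) together with the function $g_{0}$ interpolating $g_{0}(1/n)=n$. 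Then $G\leq H$ is order dense, hence essential, and $H$ is essentially complete, so $G\leq H$ is precisely the maximal essential extension appearing in condition (1). Let $h\in H^{+}$ be the function interpolating $h(1/n)=n+(-1)^{n}/2$. Every truncate $h\wedge m$ is bounded piecewise linear and so lies in $G$; yet $h-g_{0}$ is a bounded element of $G\langle h\rangle$ (indeed $\left\vert h-g_{0}\right\vert \leq 1/2$) which is not in $G$: in any bounded word $\bigvee_{i}\bigwedge_{j}(b_{ij}+m_{ij}g_{0})$ the terms with $m_{ij}\neq 0$ are swamped near $0$, so every element of $G^{*}$ agrees near $0$ with a bounded piecewise linear function and in particular has a limit at $0$, whereas $h-g_{0}$ oscillates between $\pm 1/2$ on every neighborhood of $0$. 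Thus $(G\langle h\rangle)^{*}\neq G^{*}$. The same example exposes two secondary problems: $D(YG)$ is not an $\ell$-group unless $YG$ is quasi-$F$ (here $h-g_{0}$ does not even exist in $D[0,1]$), so ``the subobject of $D(YG)$ generated by $G$ and $h$'' need not make sense; and the universal property you quote should read ``containing $G^{*}$,'' not ``containing $G$.''

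The equivalences can be assembled, but not through that lemma. The directions $(1)\Rightarrow(3)$ and $(2)\Rightarrow(3)$ need no such lemma: if $K^{*}=G^{*}$, then $K$ is an essential extension of $K^{*}=G^{*}$ (meet with the unit), and since $G^{*}\leq G\leq H$ is also essential with $H$ essentially complete, Proposition \ref{Prop:17}(4) places $K$ over $G^{*}$ inside $H$ (for condition (2), inside $D(YG)$ via the classical Yosida representation); then every $k\in K^{+}$ has its truncates in $K^{*}=G^{*}\subseteq G$, so the hypothesis puts $k$, hence all of $K$, inside $G$ --- this is the sound kernel of your own parenthetical remark, and it requires no generated subobject at all. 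The genuinely hard directions are $(3)\Rightarrow(1)$ and $(3)\Rightarrow(2)$, and there the argument must run through Hager's actual theorem, quoted in Subsection \ref{Subsec:15}: a (divisible) $\ast$-max object contains a supremum for every \emph{expanding sequence}, i.e.\ every sequence with $u_{n+1}\wedge n=u_{n}$ and $\bigcap_{n}(u_{n+1}-u_{n})^{\bot\bot}=0$. One checks that the truncates $u_{n}=h\wedge n$ of $h\in H^{+}$ (or $h\in D^{+}(YG)$) satisfy the polar condition (in $H$ one has $\coz(u_{n+1}-u_{n})=h(n,\infty)$ and $\bigwedge_{n}h(n,\infty)=\bot$, and polars transfer along essential extensions by Lemma \ref{Lem:9}); their supremum $g$ in $G$ remains their supremum in $H$ because essential embeddings are complete (Lemma \ref{Lem:15}); and $h$ is also that supremum by Proposition \ref{Prop:3}; hence $h=g\in G$. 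The structural difference from your proposal is decisive: the correct argument maps truncate data \emph{into} $G$ and never forms $G\langle h\rangle$, which is exactly the object your false lemma was needed to control.
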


\subsection{Pointwise completeness \label{Subsec:9}}

\begin{definition}
A $\mathbf{W}$-object is pointwise complete ($\sigma$-complete) if every
(countably generated) cut in $G$ has a pointwise join in $G.$
\end{definition}

\begin{theorem}
\label{Thm:4}The following are equivalent for a $\mathbf{W}$-object $G$.

\begin{enumerate}
\item $G$ is pointwise complete ($\sigma $-complete).

\item Every (countably generated) mobile downset of $G$ has a pointwise join
in $G$.

\item $G$ is of the form $\mathcal{R}L$ for a boolean frame ($P$-frame) $L$.
\end{enumerate}
\end{theorem}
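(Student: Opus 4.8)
\textbf{Proof proposal for Theorem \ref{Thm:4}.}

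The plan is to prove the cycle of implications $(3) \Rightarrow (1) \Rightarrow (2) \Rightarrow (3)$, treating the unconditional and $\sigma$-versions in parallel (boolean frame versus $P$-frame), since the arguments run mutatis mutandis. By Theorem \ref{Thm:5}, the equivalence of $(1)$ and $(2)$ is nearly immediate: cuts are precisely the mobile downsets, so the only content in $(1) \Leftrightarrow (2)$ is bookkeeping about which cuts are ``countably generated'' matching ``countably generated mobile downsets.'' I would dispatch this first and then focus the real work on the two implications connecting $(3)$ with the completeness condition.

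For $(3) \Rightarrow (1)$, suppose $G = \mathcal{R}L$ with $L$ boolean (a $P$-frame), and let $Z$ be a (countably generated) cut. By Lemma \ref{Lem:14} the supremum of $Z$ exists in some essential extension $H \geq G$, and by Proposition \ref{Prop:17} the maximal essential extension is $\mathcal{R}L \leq \mathcal{R}L^{**}$. When $L$ is already boolean, $L = L^{**}$, so $G = \mathcal{R}L$ is itself essentially complete and the supremum $h = \bigvee Z$ already lies in $G$; then Lemma \ref{Lem:15} (essential injections are complete) together with the machinery of Theorem \ref{Thm:3} should upgrade this ordinary join to a pointwise join. For the $P$-frame case the argument is more delicate because $L \neq L^{**}$ in general: here I would invoke Lemma \ref{Lem:2}, which guarantees that an element of $H = \mathcal{R}L^{**}$ whose truncates all lie in $G$ must itself lie in $G$. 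Combining this with Proposition \ref{Prop:3} (every element is the pointwise join of its truncates) and Corollary \ref{Cor:8} (truncate sequences have pointwise joins) should show that the join lands back in $G$ and is pointwise.

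For the reverse implication $(1) \Rightarrow (3)$, the strategy is to first show that pointwise completeness forces $G$ to be essentially complete, whence $G \cong \mathcal{R}L$ for a boolean frame by Proposition \ref{Prop:17}(1); for the $\sigma$-version one shows instead that $L$ is a $P$-frame. The key is that if $G$ has a proper essential extension $G < H$, one can exhibit a bounded (hence mobile, by Proposition \ref{Lem:16}(2)) downset $Z \subseteq G$ whose supremum lies in $H \setminus G$, contradicting pointwise completeness; a natural candidate is $\downset{h}_G$ for some $h \in H^+$, using Lemma \ref{Lem:7}(2) to identify $h$ as the genuine supremum. Then Theorem \ref{Thm:3} converts the resulting pointwise completeness of $G^*$ into the booleanness (or $P$-frame property) of $L$.

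The main obstacle will be the $\sigma$-restricted (countable) version of $(1) \Rightarrow (3)$. The difficulty is that pointwise $\sigma$-completeness only guarantees pointwise joins for \emph{countably generated} cuts, so I cannot directly appeal to essential completeness, which concerns arbitrary suprema. Instead I expect to have to manufacture, from an arbitrary cozero element $a \in L$, a concrete countably generated mobile downset---most plausibly built from a sequence $g_n \equiv nf \wedge 1$ as in the proofs of Theorem \ref{Thm:1} and Theorem \ref{Thm:3}---whose pointwise join witnesses that $a$ is complemented, thereby forcing $L$ to be a $P$-frame. Marrying the cut-theoretic language of this section to the explicit characteristic-function computations of Theorem \ref{Thm:3} is where the care will be required.
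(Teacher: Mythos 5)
Your skeleton mostly tracks the paper's: $(1)\Leftrightarrow(2)$ is immediate from Theorem \ref{Thm:5}, your $(3)\Rightarrow(1)$ is the paper's argument (reduce to the maximal essential extension via Lemma \ref{Lem:14} and Proposition \ref{Prop:17}; essential completeness settles the boolean case; Lemma \ref{Lem:2} and Proposition \ref{Prop:3} settle the $P$-frame case), and your essential-completeness route for the boolean half of $(1)\Rightarrow(3)$ is a sound variant of the paper's uniform route through Theorem \ref{Thm:3}. The serious problem is the $\sigma$-case of $(1)\Rightarrow(3)$: your plan proves less than what is claimed. Manufacturing countably generated mobile downsets from sequences $nf\wedge 1$ can show at best that every cozero element of $L\equiv\mathcal{M}G$ is complemented, i.e., that $L$ is a $P$-frame. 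But condition (3) asserts that $G$ \emph{is} $\mathcal{R}L$, and the $P$-frame property of the Madden frame does not give surjectivity of the Madden representation: $\mathcal{R}^*L\le\mathcal{R}L$, with $L$ an infinite $P$-frame, has Madden frame $L$ as well, yet is a proper subobject. The paper closes exactly this hole: bounded countable downsets are mobile (Proposition \ref{Lem:16}(2)), so hypothesis (2) makes $G^*$ conditionally pointwise $\sigma$-complete, and Theorem \ref{Thm:3} then gives both that $L$ is a $P$-frame and that $\widehat{G}^*=\mathcal{R}^*L$; finally, for each $f\in\mathcal{R}L^+$, the truncates $f\wedge n$ lie in $G$, the downset they generate is countably generated and mobile (Proposition \ref{Lem:16}(1)), so (2) provides a pointwise join $g\in G$, and Proposition \ref{Prop:7}, applied to the embedding $G\to\mathcal{R}L$, forces $g=f$, whence $G=\mathcal{R}L$. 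Some step of this kind is indispensable in your write-up.

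A second, smaller gap sits in your $(3)\Rightarrow(1)$ for the $P$-frame case: you invoke Lemma \ref{Lem:2} without verifying its hypothesis that all truncates of $h_0\equiv\bigvee Z$, computed in $H=\mathcal{R}L^{**}$, lie in $G$; neither Proposition \ref{Prop:3} nor Corollary \ref{Cor:8} supplies this. The paper's bridge is Theorem \ref{Thm:3} once more: $\mathcal{R}L$ is conditionally pointwise $\sigma$-complete, so each bounded countable set $Z\wedge n$ has a pointwise supremum $g_n\in G$, and Proposition \ref{Prop:7} transfers that join to $H$, where $\bigvee(Z\wedge n)=h_0\wedge n$; thus $h_0\wedge n=g_n\in G$ for all $n$, Lemma \ref{Lem:2} then puts $h_0$ in $G$, and Proposition \ref{Prop:3} (together with each $g_n$ being the pointwise join of $Z\wedge n$) makes $\bigvee^\bullet Z=h_0$. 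Finally, two citation repairs in your boolean $(1)\Rightarrow(3)$: the downset $\downset{h}_G$ need not be bounded by an element of $G$, so its mobility should come from Definition \ref{Def:7} and Theorem \ref{Thm:5} (it has a supremum in $H$) rather than from Proposition \ref{Lem:16}(2); and Lemma \ref{Lem:7}(2) requires order density, so take $H$ to be the maximal essential extension, which is divisible, and invoke Lemma \ref{Lem:9}(5).
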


\begin{proof}
The equivalence of (1) with (2) follows from Theorem \ref{Thm:5}. If $G$
satisfies (2) then by Lemma \ref{Lem:16}(2) every bounded (countable) subset
of $G^{+}$ has a pointwise join in $G$, hence by Theorem \ref{Thm:3} $G$ is
a subobject of $\mathcal{R}L$, $L$ boolean (a $P$-frame), and $G$ contains $%
\mathcal{R}^{\ast }L$. That $G$ is actually all of $\mathcal{R}L$ follows
from Proposition \ref{Prop:3}.

Let us show that (3) implies (1) when $G$ is of the form $\mathcal{R}L$ for
some $P$-frame $L$. Let $Z$ be a countable subset of $G^{+}$ such that $%
\bigvee Z=h_{0}$ in some extension $G\leq H$. By Lemma \ref{Lem:14} we may
assume this extension to be essential, and, in fact, it does no harm to
assume that $H$ is the maximal essential extension of $G$. That is because
any essential extension of $G$ is isomorphic to an $\ell$-subgroup of $H$
containing $G$, and suprema in all these extensions agree by Lemma \ref%
{Lem:15}.

Since $G$ is conditionally pointwise $\sigma$-complete by Theorem \ref{Thm:3}%
, for each $n$ the subset $Z \wedge n$ has a pointwise supremum $g_n \in G$.
Of course, the same subset has supremum $h_0 \wedge n$ in $H$, and the two
suprema coincide by Proposition \ref{Prop:7}. Lemma \ref{Lem:2} then implies
that $h_0 \in G$. Since $h_0$ is the pointwise join of the $g_n$\rq{}s by
Proposition \ref{Prop:3}, and in light of the fact that each $g_n$ is the
pointwise join of $Z \wedge n$, it follows that $\bigvee^\bullet Z = h_0 \in
G$.

It remains to show that (3) implies (1) in case $G=\mathcal{R}L$ for some
boolean frame $L$. Let $Z\subseteq G^{+}$ be such that $\bigvee Z=h_{0}$ in
some extension $G\leq H$. By Lemma \ref{Lem:14} again, we may assume this
extension to be essential. Because $G$ is essentially complete by
Proposition \ref{Prop:17}, we know that $G = H$. And finally, the supremum $%
\bigvee Z = h_0 \in G$ is pointwise by Proposition \ref{Prop:12}.
\end{proof}

\begin{corollary}
\label{Cor:10} $G$ is pointwise $\sigma $-complete iff it is epicomplete in $%
\mathbf{W}$.
\end{corollary}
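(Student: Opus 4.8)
The plan is to obtain this directly from Theorem~\ref{Thm:4}, which already pins down the pointwise $\sigma$-complete objects as exactly those isomorphic to $\mathcal{R}L$ for a $P$-frame $L$. By the uniqueness clause of the Madden representation (Theorem~\ref{Thm:6}), the frame appearing here must be $\mathcal{M}G$ itself; thus the statement ``$G$ is pointwise $\sigma$-complete'' unpacks to the single assertion that $\mathcal{M}G$ is a $P$-frame and that $\mu_G$ carries $G$ onto $\mathcal{R}(\mathcal{M}G)$. So it is enough to show that this same assertion characterizes epicompleteness.

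First I would recall the standard description of the epimorphisms and epicomplete objects of $\mathbf{W}$ (\cite{BallHager:1991},\cite{MaddenVermeer:1986}): a $\mathbf{W}$-object is epicomplete precisely when it has the form $\mathcal{R}L$ for a $P$-frame $L$. Granting this, the corollary is immediate by transitivity through the common middle condition, since both ``pointwise $\sigma$-complete'' and ``epicomplete'' reduce to ``$G\cong\mathcal{R}L$ with $L$ a $P$-frame,'' and in each case uniqueness of the representation forces $L\cong\mathcal{M}G$. The only care needed is that the frame $L$ is genuinely the same object in the two characterizations, which is exactly what the uniqueness of $\mu_G$ supplies.

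The substance of the argument is therefore the epicomplete characterization itself, which I expect to be the only real obstacle. For the forward half one checks that $\mathcal{R}L$ with $L$ a $P$-frame admits no proper epic extension; for the reverse half one shows that if $\mathcal{M}G$ is not a $P$-frame, or if $\mu_G$ fails to be onto, then $G$ has a proper epic extension (passing, for instance, to $\mathcal{R}(\mathcal{M}G)$ or to the image under a suitable $P$-frame reflection). Rather than redevelop the general theory of $\mathbf{W}$-epimorphisms here, I would cite it from the literature and let the paper's own results---Theorem~\ref{Thm:4} for one side of the common description, and Proposition~\ref{Prop:17} together with Lemma~\ref{Lem:2} for the bookkeeping with essential extensions and truncates---carry the transparent half.
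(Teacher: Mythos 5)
Your proposal is correct and is essentially the paper's own argument: both properties are reduced to the common characterization ``$G \cong \mathcal{R}L$ for a $P$-frame $L$,'' with Theorem~\ref{Thm:4} supplying the pointwise side and the known characterization of epicomplete $\mathbf{W}$-objects cited from the literature supplying the other (the paper cites Theorem~3.4 of the $P$-frame reflection paper of Ball, Walters-Wayland, and Zenk rather than Madden--Vermeer, but this is only a difference in attribution). The extra bookkeeping you do with uniqueness of the Madden representation is harmless but unnecessary, since the equivalence needs only that the two classes of objects coincide.
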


\begin{proof}
Both conditions are equivalent to $G$ being of the form $\mathcal{R}L$ for $%
L $ a $P$-frame. One equivalence is provided by Theorem 3.4 of \cite%
{BallWaltersZenk:2010} and the other by Theorem \ref{Thm:4}.
\end{proof}

\begin{corollary}
The full subcategory comprised of the pointwise $\sigma $-complete objects
is reflective in $\mathbf{W}$. 
\end{corollary}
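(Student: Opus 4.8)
The plan is to reduce the statement to the reflectivity of the epicomplete objects, which Corollary \ref{Cor:10} identifies with the pointwise $\sigma$-complete objects. By that corollary, together with Theorem \ref{Thm:4}, the subcategory in question is exactly the full subcategory on the objects $\mathcal{R}L$ with $L$ a $P$-frame. So it suffices to equip each $\mathbf{W}$-object $G$ with a universal morphism into an object of this form.

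The conceptual engine is the adjunction implicit in Theorem \ref{Thm:6}. Parts (1) and (3) of that theorem say precisely that $\mathbf{W}(G,\mathcal{R}M)\cong\mathbf{Frm}(\mathcal{M}G,M)$, naturally in $G$ and $M$, with the Madden insertion $\mu_G\colon G\to\mathcal{R}\mathcal{M}G$ as unit; that is, the Madden-frame functor $\mathcal{M}$ is left adjoint to $\mathcal{R}$. First I would invoke the corresponding frame-theoretic reflection: the $P$-frames form a reflective subcategory of the regular Lindel\"of frames, with reflector $L\mapsto PL$ and unit $\rho_L\colon L\to PL$. Granting this, I would set $R(G)\equiv\mathcal{R}(P\mathcal{M}G)$, which is pointwise $\sigma$-complete by Theorem \ref{Thm:4}, and take as reflection map the composite $\mathcal{R}\rho_{\mathcal{M}G}\circ\mu_G\colon G\to\mathcal{R}(P\mathcal{M}G)$. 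The universal property is then a short chase: for any $P$-frame $M$, $\mathbf{W}(G,\mathcal{R}M)\cong\mathbf{Frm}(\mathcal{M}G,M)\cong\mathbf{Frm}(P\mathcal{M}G,M)\cong\mathbf{W}(\mathcal{R}(P\mathcal{M}G),\mathcal{R}M)$, where the middle isomorphism is the frame reflection and the outer two are instances of the adjunction, the last using that $\mathcal{R}$ is full and faithful on regular Lindel\"of frames (there the counit $\mathcal{M}\mathcal{R}M\to M$ is an isomorphism).

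The step I expect to be the main obstacle is the frame-theoretic input, namely producing the $P$-frame reflection $\rho_L\colon L\to PL$ and checking that $PL$ is again regular Lindel\"of, so that $\mathcal{R}(PL)$ has Madden frame $PL$ and the final isomorphism above is legitimate. Passing to cozero $\sigma$-frames via $L\cong\mathcal{H}\coz L$ should make this manageable: a Lindel\"of $P$-frame is exactly an $\mathcal{H}$ of a boolean $\sigma$-algebra, so $PL$ ought to be $\mathcal{H}$ of the boolean-$\sigma$-algebra reflection of $\coz L$, and Corollary \ref{Cor:6} guarantees that the $P$-frame condition survives the relevant cozero-preserving quotients.

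Alternatively, and more cheaply, one may simply cite the classical fact that the epicomplete objects constitute a monoreflective subcategory of $\mathbf{W}$; combined with Corollary \ref{Cor:10} this yields the corollary at once, and the construction of the previous paragraph then serves only to exhibit the reflection concretely as $G\mapsto\mathcal{R}(P\mathcal{M}G)$.
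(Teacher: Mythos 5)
Your proposal is correct and is essentially the paper's own (largely implicit) argument: the paper states this corollary without proof as a consequence of Corollary \ref{Cor:10} together with the functorial $P$-frame reflection of \cite{BallWaltersZenk:2010}, and its very next corollary identifies the reflection map as $G\rightarrow \mathcal{RPM}G$ (\enquote*{the concatenation of three functors, hence functorial}), which is exactly your composite $\mathcal{R}\rho _{\mathcal{M}G}\circ \mu _{G}:G\rightarrow \mathcal{R}(\mathcal{PM}G)$. Both your adjunction chase via Theorem \ref{Thm:6} and your cheaper alternative (monoreflectivity of the epicomplete objects plus Corollary \ref{Cor:10}) are sound, with the existence of the $P$-frame reflection correctly flagged as the external input that the paper likewise borrows from \cite{BallWaltersZenk:2010}.
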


\begin{corollary}
The following are equivalent for an extension $G\leq H$ in $\mathbf{W}$.

\begin{enumerate}
\item 
The extension is (isomorphic to) the functorial epicompletion of $G$.

\item 
The extension is (isomorphic to) $G\rightarrow \mathcal{RP}L$, where $%
\mathcal{P}L$ designates the $P$-frame reflection of the Madden frame $L$ of 
$G$.
\end{enumerate}
\end{corollary}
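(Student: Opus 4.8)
The plan is to recognize the functorial epicompletion as the reflection of $G$ into the full subcategory of epicomplete objects, and then to exhibit $G \to \mathcal{RP}L$ as precisely that reflection. By Corollary \ref{Cor:10} together with Theorem \ref{Thm:4}, the epicomplete $\mathbf{W}$-objects are exactly those of the form $\mathcal{R}N$ for a $P$-frame $N$. Hence it suffices to produce a $\mathbf{W}$-morphism $\eta_G : G \to \mathcal{RP}L$ into such an object and to verify that every $\mathbf{W}$-morphism from $G$ into an object $\mathcal{R}N$ (with $N$ a $P$-frame) factors uniquely through $\eta_G$. Uniqueness of reflections then identifies $\eta_G$ with the functorial epicompletion, establishing the equivalence of (1) and (2).

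First I would construct $\eta_G$. Let $L \equiv \mathcal{M}G$ and let $p_L : L \to \mathcal{P}L$ be the $P$-frame reflection of $L$, i.e.\ the universal frame map from $L$ into a $P$-frame (see \cite{BallWaltersZenk:2010}). Applying the covariant functor $\mathcal{R}$ and precomposing with the Madden injection $\mu_G$ gives $\eta_G \equiv \mathcal{R}p_L \circ \mu_G : G \to \mathcal{R}L \to \mathcal{RP}L$. Since $\mathcal{P}L$ is a $P$-frame, $\mathcal{RP}L$ is epicomplete. That $\eta_G$ is an extension (a monomorphism) follows from the injectivity of $\mu_G$ and of $\mathcal{R}p_L$, the latter because the reflection map $p_L$ is dense, so that $\mathcal{R}p_L$ is one-one.

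Now for the universal property. Given a $\mathbf{W}$-morphism $\theta : G \to \mathcal{R}N$ with $N$ a $P$-frame, Theorem \ref{Thm:6}(3) supplies a unique frame map $k : L \to \mathcal{M}(\mathcal{R}N) = N$ realizing $\theta$, i.e.\ $\theta = \mathcal{R}k \circ \mu_G$ (here $\mathcal{M}(\mathcal{R}N) = N$ by the uniqueness clause of Theorem \ref{Thm:6}(2), since the identity on $\mathcal{R}N$ is cozero dense). Because $N$ is a $P$-frame, the universal property of $p_L$ yields a unique frame map $\bar{k} : \mathcal{P}L \to N$ with $\bar{k} \circ p_L = k$. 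Applying $\mathcal{R}$ and using functoriality, $\mathcal{R}\bar{k} \circ \eta_G = \mathcal{R}\bar{k} \circ \mathcal{R}p_L \circ \mu_G = \mathcal{R}k \circ \mu_G = \theta$, so $\theta$ factors through $\eta_G$. For uniqueness, any factoring morphism $\phi : \mathcal{RP}L \to \mathcal{R}N$ is, by Theorem \ref{Thm:6}(3) applied to the object $\mathcal{RP}L$ (which is its own Madden representation), of the form $\mathcal{R}m$ for a frame map $m : \mathcal{P}L \to N$; then $\mathcal{R}(m \circ p_L) \circ \mu_G = \phi \circ \eta_G = \theta = \mathcal{R}k \circ \mu_G$ forces $m \circ p_L = k$ by the uniqueness clause of Theorem \ref{Thm:6}(3), whence $m = \bar{k}$ and $\phi = \mathcal{R}\bar{k}$.

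The main obstacle I anticipate is bookkeeping rather than a deep difficulty: one must pin down the identification $\mathcal{M}(\mathcal{R}N) \cong N$, confirm that the covariant functor $\mathcal{R}$ transports the frame-level universal property of $p_L$ into the $\mathbf{W}$-level universal property of $\eta_G$, and check that the $P$-frame reflection points in the correct direction and is dense enough that $\eta_G$ is genuinely an extension. The conceptual heart is simply that $\mathcal{R}$ carries the $P$-frame reflection $p_L$ to the epicompletion $\eta_G$, converting one universal property into the other.
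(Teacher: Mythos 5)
Your route differs from the paper's: you verify directly that $\eta_G = \mathcal{R}p_L \circ \mu_G$ is a reflection of $G$ into the epicomplete objects (identified via Corollary \ref{Cor:10} and Theorem \ref{Thm:4}), whereas the paper merely observes that $G \mapsto \mathcal{RPM}G$ is a composite of three functors, hence functorial, and appeals to the abstract uniqueness of functorial epicompletions (\cite{AdamekHerrlichStrecker:2004}). Your existence half is fine, and in fact cleaner than you present it: Theorem \ref{Thm:6}(3) is stated for an \emph{arbitrary} frame $M$, so it hands you $k : L \to N$ with $\mathcal{R}k \circ \mu_G = \theta$ directly, and no identification of $\mathcal{M}(\mathcal{R}N)$ with $N$ is needed at that step.

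The genuine gap is in the uniqueness half, and it is exactly the point you set aside as bookkeeping. You justify $\mathcal{M}(\mathcal{R}N) = N$ (and likewise $\mathcal{M}(\mathcal{RP}L) = \mathcal{P}L$) by noting that the identity on $\mathcal{R}N$ is a cozero-dense injection and invoking the uniqueness clause of Theorem \ref{Thm:6}(2). But that clause can only mean uniqueness among \emph{regular Lindel\"{o}f} frames --- the Madden frame is always regular Lindel\"{o}f by the Lemma in Subsection \ref{Subsec:2} --- and read your way it proves false statements: the identity on $\mathcal{R}M$ is a cozero-dense injection for \emph{every} completely regular $M$, yet $\mathcal{R}\mathcal{O}(\omega_{1})\cong\mathcal{C}(\omega_{1})=\mathcal{C}(\omega_{1}+1)\cong\mathcal{R}\mathcal{O}(\omega_{1}+1)$ while $\mathcal{O}(\omega_{1})\not\cong\mathcal{O}(\omega_{1}+1)$, and only the compact frame is the Madden frame of this $\mathbf{W}$-object. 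So the identification $\mathcal{M}(\mathcal{RP}L) = \mathcal{P}L$ requires knowing that the $P$-frame reflection $\mathcal{P}L$ is Lindel\"{o}f, a nontrivial property of the reflection that you neither prove nor cite. Without it, Theorem \ref{Thm:6}(3) applied to $\mathcal{RP}L$ yields only $\phi = \mathcal{R}m' \circ \mu_{\mathcal{RP}L}$ with $m'$ defined on $\mathcal{M}(\mathcal{RP}L)$ rather than on $\mathcal{P}L$; your equation $m \circ p_L = k$ then never gets off the ground, and the uniqueness of the factorization through $\eta_G$ --- which is precisely what makes $\eta_G$ a reflection and so carries the whole identification with the functorial epicompletion --- remains unproved. (A smaller unsupported assertion of the same kind: that $p_L$ is dense, which you use to conclude that $\eta_G$ is one-one.)
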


\begin{proof}
A complete
description of the extension in (2) is $G \rightarrow \mathcal{RPM}G$. The
point is that the extension is the concatenation of three functors, hence it
is functorial. But there can be only one functorial epicompletion on general
grounds. See \cite{AdamekHerrlichStrecker:2004}.
\end{proof}


\begin{thebibliography}{99}
\bibitem{AdamekHerrlichStrecker:2004} J. Ad\'{a}mek, H. Herrlich, and G.
Strecker, Abstract and Concrete Categories, The Joy of Cats, \texttt{%
http://katmat.math.uni-bremen.de/acc}, August, 2004

\bibitem{AndersonFeil:1987} M. E. Anderson and T. H. Feil, Lattice Ordered
Groups: an Introduction, Reidel Texts in the Mathematical Sciences, Reidel,
1987.

\bibitem{Ball:1980} R. N. Ball, \emph{Convergence and Cauchy structures on
lattice ordered groups}, Trans. Am. Math. Soc. \textbf{259} (1980), 357--392.

\bibitem{Ball:1989} R. N. Ball, \emph{The structure of the $\alpha$%
-completion of a lattice-ordered group}, Houston J. Math. \textbf{15} 1989,
481--515.


\bibitem{BallHager:1991} R. N. Ball and A. W. Hager, \emph{On the localic
Yosida representation of an archimedean lattice ordered group with weak
order unit}, J. Pure and Appl.\ Alg.\ \textbf{70} (1991), 17--43.


\bibitem{BallHager:2006} R. N. Ball and A. W. Hager, \emph{A new
characterization of the continuous functions on a locale}, Positivity 
\textbf{10},2 (2006), 165--199.



\bibitem{BallWalters:2002} R. N. Ball and J. Walters-Wayland, \emph{$C$- and 
$C^{\ast}$- quotients in pointfree topology}, Diss. Math. \textbf{412},
2002, 1--62.

\bibitem{BallWaltersZenk:2010} R. N. Ball, J. Walters-Wayland, and E. Zenk, 
\emph{The $P$-frame reflection of a completely regular frame}, Topology
Appl. \textbf{158} (2011), 1778--1794.

\bibitem{BanaschewskiHager:2013(2)} B. Banaschewski and A. Hager, \emph{%
Essential completeness in categories of completely regular frames}, Appl.
Categor. Struct. \textbf{21} (2013), 167--180.

\bibitem{BanaschewskiHager:2013} B. Banaschewski and A. Hager, \emph{%
Essential completeness of archimedean $\ell $-groups with weak order unit},
J. Pure Appl. Algebra \textbf{217} (2013), 915--926.

\bibitem{BanaschewskiHong:2003} B. Banaschewski and S. S. Hong, \emph{%
Completeness properties and function rings in pointfree topology}, Comment.
Math. Univ. Carolin. \textbf{44} (2003), 245--259.

\bibitem{BigardKeimelWolfenstein:1977} A. Bigard, K. Keimel, and S.
Wolfenstein, \emph{Groupes et Anneaux R\'{e}ticul\'{e}es}, Lect.\ Notes in
Math.\ 608, Springer, 1977.

\bibitem{Conrad:1970} P. Conrad, \emph{The essential closure of an
Archimedean lattice-ordered group}, Duke Math. J. \textbf{38} (1970),
151--160.

\bibitem{Darnel:1994} M. Darnel, Theory of Lattice Ordered Groups, CRC Pure
and Applied Mathematics, CRC Press, 1994.

\bibitem{GillmanJerison:1960} L. Gillman and M. Jerison, Rings of Continuous
Functions, Van Nostrand, 1960.

\bibitem{Hager:2013} A. W. Hager, \emph{$\ast$-maximum lattice ordered 
groups}, Rocky Mtn. J. Math. \textbf{43} (2013), 1901--1930.

\bibitem{HagerRobertson:1977} A. W. Hager and L. Robertson, 
\emph{Representing and ringifying a Riesz space}, Sympos. Math. XXI (1977),
411--431.

\bibitem{Johnstone:1982} P. T. Johnstone, Stone Spaces, Cambridge University
Press, Cambridge, 1982.


\bibitem{MaddenVermeer:1986} J. Madden and H. Vermeer, \emph{Epicomplete
archimedean $\ell$-groups via a localic Yosida theorem}, J. Pure Appl.
Algebra \textbf{68} (1990), 243--252.

\bibitem{PicadoPultr:2012} J. Picado and A. Pultr, Frames and Locales:
Topology Without Points, Frontiers in Mathematics 28, Birkhauser, 2012.

\bibitem{Weinberg:1963} E. C. Weinberg, \emph{Free lattice ordered Abelian
groups}, Math. Ann. \textbf{151} (1963), 187--199.
\end{thebibliography}
\end{document}